\begin{document}
\title{Orthogonality relations for deep level Deligne--Lusztig schemes of Coxeter type}
\date{\today}

\author{Olivier Dudas}
\address{Universit\'e de Paris and Sorbonne Universit\'e, CNRS, IMJ-PRG, F-75006 Paris, France.}
\email{olivier.dudas@imj-prg.fr}

\author{Alexander B. Ivanov}
 \address{Mathematisches Institut, Universit\"at Bonn, Endenicher Allee 60, 53115 Bonn, Germany.}
 \email{ivanov@math.uni-bonn.de}

\begin{abstract} 
In this paper we prove some orthogonality relations for representations arising from deep level Deligne--Lusztig schemes of Coxeter type. This generalizes previous results of Lusztig \cite{Lusztig_04}, and of Chan and the second author \cite{CI_MPDL}. Potential applications include the study of unipotent representations arising from such deep level Deligne--Lusztig schemes, as well as their geometry, in the spirit of Lusztig's work \cite{Lusztig_76_Inv}.
\end{abstract}

\maketitle

\section{Introduction}

In the last fifteen years various $p$-adic and deep level analoga of classical Deligne--Lusztig varieties attracted a lot of attention, see in particular \cite{Lusztig_04, Boyarchenko_12, ChenS_17, Chan_siDL, CI_loopGLn, Ivanov_DL_indrep}. The interest in them is justified by the fact that they allow to apply methods from classical Deligne--Lusztig theory to study representations of $p$-adic groups. Furthermore, they are very interesting geometric objects in their own right (like the classical Deligne--Lusztig varieties are). In this article we consider deep level Deligne--Lusztig schemes of Coxeter type, and prove orthogonality relations for the corresponding representations, extending a classical result of \cite{DeligneL_76} to the deep level setup.

Let $k$ be a non-archimedean local field with uniformizer $\varpi$ and residue field $\bF_q$ with $q$ elements. Let $\breve k$ denote the completion of a maximal unramified extension of $k$ with residue field $\obF$. Let $\bfG$ be an unramified reductive group over $k$, $\bfT \subseteq \bfG$ a $k$-rational unramified maximal torus, and $\bfU$ the unipotent radical of a $\breve k$-rational Borel subgroup of $\bfG$ containing $\bfT$. Let $\caG = \caG_{\bf x}$ be a (connected) parahoric $\caO_k$-group scheme with generic fiber $\bfG$, whose corresponding facet ${\bf x}$ in the Bruhat--Tits building (over $k$) of the adjoint group of $\bfG$ lies in the apartment of $\bfT$, and let $\caT \subseteq \caG$ denote the schematic closure of $\bfT$ in $\caG$.

Fix an integer $r \geq 1$. Let $G = \caG(\caO_k/\varpi^r)$ and $T = \caT(\caO_k/\varpi^r)$. In \cite{Lusztig_04, Stasinski_09, CI_MPDL}, a certain (perfect) $\obF$-scheme $S_{\bfT,\bfU} = S_{{\bf x},\bfT,\bfU,r}$ equipped with a natural $G\times T$-action was defined. In a sense, it can be regarded as a deep level analog of a classical Deligne--Lusztig variety\footnote{For example, if $r=1$ and $\bf x$ hyperspecial, then there is a natural map from $S_{\bfT,\bfU}$ into some classical Deligne--Lusztig variety attached to the special fibers of $\caG,\caT,\caU$; this map induces an isomorphism of $\ell$-adic cohomology groups, up to a degree shift.} As in the classical Deligne--Lusztig theory, the cohomology of $S_{\bfT,\bfU}$ attaches to any character $\theta \colon T \rar \overline \bQ_\ell^\times$ ($\ell \neq \charac \bF_q$) the $G$-representation $R_{\bfT,\bfU}(\theta) = \sum_{i \in \bZ} (-1)^i H_c^i(S_{\bfT,\bfU},\overline \bQ_\ell)_\theta$.  One of the central features within the classical Deligne--Lusztig theory is the \emph{Deligne--Lusztig orthogonality relation}, which computes (in the classical case, that is $r=1$, $\bf x$ hyperspecial) the inner product of two virtual representations $R_{\bfT,\bfU}(\theta)$, $R_{\bfT',\bfU'}(\theta')$ \cite[Thm. 6.8]{DeligneL_76}.

The goal of the present article is to generalize the abovementioned classical orthogonality relations to deep level schemes $S_{\bfT,\bfU}$ of Coxeter type. There is a meaningful notion of a \emph{Coxeter pair} $(\bfT,\bfU)$ (cf. Section \ref{sec:Cox_tori_setup}), which essentially means that $S_{\bfT,\bfU}$ is the deep level analog of a classical Deligne--Lusztig variety of Coxeter type. In that case the intersection of the apartment of $\bfT$ in the $k$-rational Bruhat--Tits building of the adjoint group is just one vertex, ${\bf x}_\bfT$, and (as we assumed $\bfG$ to be unramified) this vertex must necessarily be hyperspecial (cf. Section \ref{sec:Cox_tori_setup}). The following theorem is our main result.

\begin{repthm}{thm:higher_mackey}
Let $(\bfT,\bfU)$, $(\bfT',\bfU')$ be Coxeter pairs with $\bf x = {\bf x}_{\bfT} = {\bf x}_{\bfT'}$ (then, automatically, ${\bf x}$ is hyperspecial). Assume that $q > 5$. Then for all $r \geq 1$ and all $\theta \colon T \rar \overline \bQ_\ell^\times$, $\theta' \colon T' \rar \overline \bQ_\ell^\times$, we have
\[
\langle R_{\bfT,\bfU}(\theta), R_{\bfT',\bfU'}(\theta') \rangle_G = \# \left\{w \in W(\bfT,\bfT')^F \colon \theta' = {}^w\theta \right\},
\]
where $W(\bfT,\bfT') =\bfT(\breve k) \backslash \{ g \in \bfG(\breve k)\colon {}^g\bfT' = \bfT\}$.
\end{repthm}

Note that the assumption on $q$ can be strengthened depending on the root system of $\bfG$, see Condition~\eqref{cond:condition_on_rootsystem}.

\smallskip 

Our proof follows an idea of \cite{Lusztig_04} (already appearing in \cite{DeligneL_76}) which consists in extending the $T \times T'$-action on various subschemes of $\Sigma = G \backslash(S_{\bfT,\bfU} \times S_{\bfT',\bfU'})$ to an action of some torus with finitely many fixed points. This considerably simplifies the computation of the Euler characteristic of $\Sigma$. For $\bfG = {\bf GL}_n$ (and, essentially, for any unramified group of type $A_n$) Theorem \ref{thm:higher_mackey}  was proven in \cite[\S3]{CI_loopGLn}. However, the general case requires several serious improvements, which are the core of the present work. 

Let us explain the importance of Theorem \ref{thm:higher_mackey}. Under the additional condition that $\theta$ (or $\theta'$) is \emph{regular}, i.e. ``highly non-trivial'' on $\ker(\caT(\caO_k/\varpi^r) \rar \caT(\caO_k/\varpi^{r-1}))$, the result was already shown in \cite{Lusztig_04,Stasinski_09}. However, from the perspective of unipotent representations, the most interesting case is that of $\theta = \theta' = 1$. In the classical situation ($r=1$, $\bf x$ hyperspecial), the characters $R_{\bfT,\bfU}(1)$ as well as the geometry and cohomology of the related Coxeter type Deligne--Lusztig varieties were studied in Lusztig's seminal work \cite{Lusztig_76_Inv}; one of the starting points for \cite{Lusztig_76_Inv} was the orthogonality relation \cite[Thm. 6.8]{DeligneL_76} with $\theta =\theta' = 1$. In light of this, Theorem \ref{thm:higher_mackey} now opens up the way towards the study of deep level schemes $X_w^{\caG,r}(1)$ of Coxeter type (which are isomorphic to certain quotients of $S_{\bfT,\bfU}$) and the corresponding ``unipotent'' representations (=irreducible constituents of $R_{\bfT,\bfU}(1)$) of the group $\caG(\caO_k/\varpi^r)$ and their inflation to $\caG(\caO_k)$.

The schemes $X_w^{\caG,r}(1)$ are integral truncated versions of (and closely related to) the ``big'' $p$-adic Deligne--Lusztig spaces $X_w(1)$, which carry an $\bfG(k)$-action, studied in \cite{Ivanov_DL_indrep}. At least in special cases, the ``$p$-adic Deligne--Lusztig'' $\bfG(k)$-representations in the cohomology of $X_w(1)$ realize interesting supercuspidal representations of $\bfG(k)$, related to the local Langlands correspondences (cf. \cite[Thm. A]{CI_loopGLn}), and they are compactly induced from the $\caG(\caO_k)$-representations $R_{\bfT,\bfU}(\theta)$ considered here.

\subsection*{Acknowledgements} 
The first author gratefully acknowledges financial support by the grant ANR-16-CE40-0010-01. The second author was supported by the DFG through the Leibniz Prize of Peter Scholze. 

\section{Setup and preliminaries} 
\subsection{Some notation}
Given a group $G$ and $g,x \in G$, we write ${}^g x = gxg^{-1}$ and $x^g = g^{-1}xg$. If $\theta$ is an irreducible character of a finite subgroup $H$ of $G$ then ${}^g\theta$ is the character of $H^g$ given by ${}^g\theta(x) := \theta (g x g^{-1})$. 

Let $p$ be a prime number. Given a ring $R$ of characteristic $p$, we denote by $\Perf_R$ the category of perfect $R$-algebras, and by $W(R)$ the ($p$-typical) Witt vectors of $R$.

Let $k$ be a non-archimedean local field with residue field $\bF_q$, where $q$ is some fixed power of $p$. The ring of integers of $k$ will be denoted by $\caO_k$. Let $\varpi$ be a uniformizer of $k$. Given $R \in \Perf_{\bF_q}$, there is an essentially unique $\varpi$-adically complete and separated $\caO_k$-algebra $\bW(R)$, in which $\varpi$ is not a zero-divisor and which satisfies $\bW(R)/ \varpi\bW(R) = R$. Explicitly we have 
\[
\bW(R) = \begin{cases}  W(R) \otimes_{W(\bF_q)} \caO_k &\text{if $\charac k = 0$} \\ R[\![\varpi]\!] &\text{if $\charac k = p$,}\end{cases}
\]
i.e., in the first case $\bW(R)$ are the ramified Witt vectors, details on which can be found for example in \cite[1.2]{FarguesFontaine_book}. In particular, $\bW(\bF_q)[1/\varpi] = k$. Fix an algebraic closure $\obF$  of $\bF_q$ and put $\caO_{\breve k} = \bW(\obF)$ and $\breve k = \bW(\obF)[1/\varpi]$. The field $\breve k$ is the $\varpi$-adic completion of a maximal unramified extension of $k$. 

\subsection{Loop functors}
Let $\caX$ be an $\caO_{\breve k}$-scheme. We have the functor of positive loops and its truncations (also called Greenberg functors, following \cite{Greenberg_61})
\begin{align*} 
L^+\caX \colon \Perf_{\obF} &\rar {\rm Sets}, \quad  (L^+\caX)(R) = \caX(\bW(R)) \\
L_r^+\caX \colon \Perf_{\obF} &\rar {\rm Sets}, \quad  (L_r^+\caX)(R) = \caX(\bW(R)/\varpi^r\bW(R)).
\end{align*}
If $\caX$ is affine of finite type over $\caO_{\breve k}$, then $L^+\caX$ and $L_r^+\caX$ are representable by affine perfect schemes, and the latter is of perfectly finite type over $\obF$, as follows from \cite{Greenberg_61}.

Moreover, if $\caX$ is equipped with an $\caO_k$-rational structure, i.e., $\caX = \caX_0 \otimes_{\caO_k} \caO_{\breve k}$ for an $\caO_k$-scheme $\caX_0$, then $L^+\caX$ and $L^+_r\caX$ both come equipped with geometric Frobenius automorphisms (over $\obF$), which we denote by $F \colon L^+\caX \rar L^+\caX$ resp. $F \colon L^+_r\caX \rar L^+_r\caX$.

\subsection{Perfect schemes and $\ell$-adic cohomology}\label{sec:perfect_schemes_and_coh}

We fix a prime $\ell \neq p$, and an algebraic closure $\overline \bQ_\ell$ of $\bQ_\ell$. Without further reference we will make use of the formalism of \'etale cohomology with compact support, as developed in \cite{Deligne_77}. If $f \colon X \rar \Spec \obF$ is a (separated) morphism of finite type, then we put $H_c^i(X,\overline \bQ_\ell) = R^if_!\overline \bQ_\ell$, where $\overline \bQ_\ell$ is the constant local system of rank $1$ on $X$. Then $H_c^i(X, \overline \bQ_\ell)$ is a finite dimensional $\overline \bQ_\ell$-vector space, which is zero for almost all $i \in \bZ$, and we may form the $\ell$-adic Euler characteristic $H_c^\ast(X) = \sum_{i\in \bZ} (-1)^iH_c^i(X,\overline \bQ_\ell)$ of $X$, which is an element of the Grothendieck group of finite dimensional $\overline \bQ_\ell$-vector spaces. 

If $X$ is a perfect scheme over $\obF$, such that the structure morphism $f \colon X \rar \Spec \obF$ is (separated and) of perfectly finite type, we may choose any model $f_0 \colon X_0 \rar \Spec \obF$ of finite type over $\obF$, such that $f$ is the perfection of $f_0$. Then we have $H_c^\ast(X) = H_c^\ast(X_0)$. Hence, the above cohomological formalism extends to (separated) perfectly finitely presented perfect schemes over $\obF$. In particular, $H_c^\ast(X)$ makes sense as a virtual (finite) $\overline \bQ_\ell$-vector space. Moreover, if $X$ is acted on by a finite group $G$, then by functoriality, $H_c^\ast(X)$ is a virtual $\overline \bQ_\ell G $-module.

Below (in Section \ref{sec:comparison_of_cells}) we will often encounter the following situation. Let $X$ and $G$ be as in the preceding paragraph. Suppose that $X$ is affine and that there is a torus $\bT$ over $\obF$ which acts on $X$, and that this action commutes with the $G$-action. Then 
\begin{equation}\label{eq:Euler_char}
H_c^\ast(X) = H_c^\ast(X^T)  
\end{equation}
as $\overline \bQ_\ell[G]$-modules, as follows from \cite[10.15]{DigneM_91}. This will apply to schemes $\widehat\Sigma_w$, $\widetilde\Sigma_w$ constructed in Sections \ref{sec:Lusztigs_extension},\ref{sec:another_extension_of_action}. We also must apply this to the schemes $Y_{v,w}$ (resp. $Z_{v,w}$) constructed in Section \ref{sec:isomorphism}, which are locally closed subschemes of $\widehat\Sigma_{v,w}$ of which we do not know that they are affine. However, the action of the torus $\bT$ on $Y_{v,w}$ (resp. $Z_{v,w}$) will be the restriction of an action of the same $\bT$ on $\widehat\Sigma_w$ (resp. $\widehat\Sigma_v$). In this situation the proof of \cite[10.15]{DigneM_91} still applies and hence \eqref{eq:Euler_char} still holds for $Y_{v,w}$, $Z_{v,w}$. 

In the rest of this article all schemes over $\bF_q$ or $\obF$ will be separated, perfect and of perfectly finite type (unless specified otherwise). Whenever we consider objects over $\bF_q$ or $\obF$, we simply write ``scheme'' for ``perfect scheme''. 

\subsection{Groups, parahoric models and Moy--Prasad quotients}\label{sec:groups_setup}
Let $\bfG$ be a reductive group over $k$ which splits over $\breve k$. For $E \in \{k,\breve k\}$, let $\caB(\bfG, E)$ be the Bruhat--Tits building of the adjoint group of $\bfG$. The Frobenius of $\breve k/k$ induces automorphisms of $\bfG(\breve k)$ and $\caB(\bfG,\breve k)$, both denoted by $F$, and we have $\bfG(\breve k)^F = \bfG(k)$ and $\caB(\bfG,\breve k)^F = \caB(\bfG,k)$. 

Let ${\rm Tori}_{\breve k/k}(\bfG)$ be the set of $k$-rational $\breve k$-split maximal tori of $\bfG$. Given $\bfT \in {\rm Tori}_{\breve k/k}(\bfG)$, we denote by $X^\ast(\bfT)$ (resp. $X_\ast(\bfT)$) the group of characters (resp. cocharacters) of $\bfT$, and by $\Phi(\bfT,\bfG) \subseteq X^\ast(\bfT)$ the set of roots of $\bfT$ in $\bfG$. Given $\alpha \in \Phi(\bfT,\bfG)$, $\bfU_\alpha$ denotes the corresponding root subgroup. Furthermore, we denote by $F$ the automorphism of $X^\ast(\bfT)$ resp. $X_\ast(\bfT)$ induced by the Frobenius of $\breve k/k$. Let $\caA(\bfT, \breve k)$ denote the apartment of $\bfT$ in $\caB(\bfG,\breve k)$. 

From the theory of Bruhat--Tits, we can attach to any facet ${\bf x} \in \caB(\bfG,k)$  a connected parahoric $\caO_k$-model $\caG_{\bf x}$ of $\bfG$ \cite[\S4.6, 5.2.6]{BruhatT_84}. It is smooth, affine and has generic fiber $\bfG$. 
The group $\caG_{\bf x}(\caO_{\breve k})$ admits a Moy--Prasad filtration by subgroups $\caG_{\bf x}(\caO_{\breve k})_r$ for $r \in \widetilde{\bR}_{\geq 0} = \bR_{\geq 0} \cup \{r+ \colon r \in \bR_{\geq 0}\}$ \cite[\S2]{MoyP_94}. By \cite[8.6 Cor., \S9.1]{Yu_02}, there exists a unique smooth affine $\caO_k$-model $\caG_{\bf x}^r$ of $\bfG$ satisfying $\caG_{\bf x}^r(\caO_{\breve k}) = \caG_{\bf x}(\caO_{\breve k})_r$. It is obtained from $\caG_{\bf x}$ by a series of dilatations along the unit section.

For the rest of this article we fix an integer $r \geq 1$. We consider the fpqc-quotient  
\begin{equation}\label{eq:MoyPrasad_scheme}
\bG = \bG_r = L^+\caG_{\bf x}/L^+\caG_{\bf x}^{(r-1)+}
\end{equation}
of sheaves on $\Perf_{\bF_q}$. It is representable by a (perfect) affine $\bF_q$-group scheme, perfectly of finite type over $\bF_q$ \cite[Prop. 4.2(ii)]{CI_ADLV}. We denote this group scheme, as well as its base change to $\obF$, again by $\bG$. The $\obF$-group $\bG$ admits a geometric Frobenius automorphism $F \colon \bG \rar \bG$ attached to its $\bF_q$-rational structure. We have
\[
\bG(\obF) = \caG_{\bf x}(\caO_{\breve k})/\caG_{\bf x}(\caO_{\breve k})_{(r-1)+} \quad \text{ and } \quad \bG(\bF_q) = \bG^F = 
\caG_{\bf x}(\caO_k)/\caG_{\bf x}^{(r-1)+}(\caO_k)
\]
(by taking Galois cohomology and using that $\caG_{\bf x}^{r'}$ is pro-unipotent for $r' > 0$, see \cite[\S2.6]{MoyP_94}).
For more details on this setup and for a more explicit description of $\bG$ in terms of root subgroups, we refer to \cite[\S2.4,2.5]{CI_MPDL} (such an explicit description will not be used below).

\begin{rem}\label{rem:reductive_quotient_r1}
Instead of \eqref{eq:MoyPrasad_scheme} we could work with the seemingly more natural object $L^+_r\caG_{\bf x}$ ($r$-truncated positive loops of $\caG_{\bf x}$). However, the advantage of the normalization in \eqref{eq:MoyPrasad_scheme} is that $\bG_1$ is canonically isomorphic to the reductive quotient of the special fiber $\caG_{\bf x} \otimes_{\caO_k} \bF_q$ (cf. \cite[\S3.2]{MoyP_94}), whereas $L^+_1\caG_{\bf x}$ identifies with the special fiber of $\caG_{\bf x}$, which is less useful. On the other side, if ${\bf x}$ is hyperspecial (as will be the case in our main result Theorem \ref{thm:higher_mackey}), then $L^+_r\caG_{\bf x} = \bG_r$.
\end{rem}

\subsection{Subschemes of $\bG$}\label{sec:subschemes_of_bG}
Let $\bfH \subseteq \bfG$ be a smooth closed $\breve k$-subgroup. The schematic closure $\caH \subseteq \caG_{\bf x}$ of $\bfH$ is a flat closed $\caO_{\breve k}$-subgroup scheme of $\caG_{\bf x}$ by \cite[1.2.6,1.2.7]{BruhatT_72}. Applying $L^+_r$ gives a closed immersion $L^+_r\caH_{\bf x} \subseteq L^+_r\caG_{\bf x}$ by \cite[Cor. 2 on p. 639]{Greenberg_61}. We define the closed $\obF$-subgroup $\bH \subseteq \bG$ as the image of $L^+_r\caH$ under $L^+_r\caG_{\bf x} \tar \bG$. 
If $\bfH$ is already defined over $k$, then $\caH$ is defined over $\caO_k$, and hence $\bH$ is defined over $\bF_q$. In this case, we usually will write $H := \bH(\bF_q)$.

Furthermore, for each $0<r'\leq r$, we have a natural homomorphism $\bH = \bH_r \rar \bH_{r'}$, and we denote its kernel by $\bH_r^{r'}$ (resp. simply $\bH^{r'}$).

In particular, this procedure applies to any $\bfT \in {\rm Tori}_{\breve k/k}(\bfG)$, any root subgroup $\bfU_\alpha$ (with $\alpha\in \Phi(\bfT,\bfG)$) and the unipotent radical $\bfU$ of any $\breve k$-rational Borel subgroup containing $\bfT$. This gives the subgroups $\bT,\bU_\alpha,\bU \subseteq \bG$, etc. and we will use this notation without further reference.

\subsection{Coxeter pairs and Coxeter tori}\label{sec:Cox_tori_setup}

Suppose that $\bfG$ is unramified (that is quasi-split over $k$ and split over $\breve k$). Let $\bfT_0 \subseteq \bfB_0 \subseteq \bfG$ be a $k$-rational Borel subgroup and a $k$-rational maximal torus of $\bfG$ contained in it. Let $W_0 = N_{\bfG}(\bfT_0)(\breve k)/\bfT_0(\breve k)$ be the Weyl group of $\bfT_0$. It is a Coxeter group with the set of simple reflections $S_0$ determined by $\bfB_0$. The Frobenius of $\breve k/k$ induces an automorphism $\sigma$ of $W_0$ fixing the set of simple reflections. Changing $(\bfT_0,\bfB_0)$ amounts to replacing $(W_0,S_0,\sigma)$ by a triple canonically isomorphic to it (just as in \cite[1.1]{DeligneL_76}). In particular, whenever we have a vertex ${\bf x} \in \caB(\bfG,k)$ as in Section \ref{sec:groups_setup}, we may assume that ${\bf x} \in \caA(\bfT_0,k)$.

Any pair $(\bfT,\bfB)$ with $\bfT \in {\rm Tori}_{\breve k/k}(\bfG)$ and $\bfB$ a $\breve k$-rational Borel subgroup containing it, determines the triple $(W,S,F)$, where $W$ is the Weyl group of $\bfT$, $S$ the set of simple reflections determined by $\bfB$ and $F \colon W \rar W$ is induced by the Frobenius. There is a uniquely determined coset $g\bfT_0(\breve k) \subseteq \bfG(\breve k)$ with ${}^g(\bfT_0,\bfB_0) = (\bfT,\bfB)$ and we have $g^{-1} F(g) \in N_{\bfG}(\bfT_0)(\breve k)$ mapping to some element $w = w_{\bfT,\bfB}\in W_0$. In this case, the triples $(W,S,F)$ and $(W_0,S_0,{\rm Ad}\, w \circ \sigma)$ are canonically isomorphic and we may (and will) identify them.

\begin{Def}\label{Coxeter pair} 
\begin{itemize}
\item[(i)] Given $w \in W_0$, we say that $w\sigma$ (or by abuse of language $w$) is a \emph{twisted Coxeter element} if a (any) reduced expression of $w_{\bfT,\bfB}$ contains precisely one simple reflection from any $\sigma$-orbit on $S_0$.  If $W_0$ is irreducible, the order $h$ of $w\sigma$ is called the \emph{Coxeter number} of $(W_0,\sigma)$.  
\item[(ii)] We say that $(\bfT,\bfB)$ (resp. $(\bfT,\bfU)$, where $\bfU$ is the unipotent radical of $\bfB$) is a \emph{Coxeter pair}, if $w_{\bfT,\bfB} \sigma$ is a twisted Coxeter element.
\item[(iii)] If $(\bfT,\bfB)$ is a Coxeter pair we say that $\bfT \in {\rm Tori}_{\breve k/k}(\bfG)$ is a \emph{Coxeter torus}.
\end{itemize}
\end{Def}

Recall that a torus $\bfT \in {\rm Tori}_{\breve k/k}(\bfG)$ is called \emph{elliptic} (or \emph{$k$-minisotropic}), if one of the following equivalent conditions holds: (i) $X^\ast(\bfT)^F = X^\ast(\bfZ(\bfG)^\circ)^F$, where $\bfZ(\bfG)^\circ$ is the connected component of the center of $\bfG$; (ii) the group $\bfT(k)$ has a unique fixed point (necessarily a vertex) ${\bf x} = {\bf x}_{\bfT}$ in $\caB(\bfG,k) = \caB(\bfG,\breve k)^F$. Any Coxeter torus is elliptic. Note that the property of a torus to be Coxeter (resp. elliptic) is stable under the equivalence relation of stable conjugacy.

\begin{lm}\label{lm:Coxeter_hyperspecial} Suppose $\bfG$ is unramified.
\begin{itemize}
\item[(i)] If $\bfT$ is a Coxeter torus, then ${\bf x}_{\bfT}$ is a hyperspecial vertex. 
\item[(ii)] $\bfT \mapsto {\bf x}_{\bfT}$ induces a natural bijection between $\bfG(k)$-conjugacy classes of Coxeter tori and $\bfG(k)$-orbits on the set of hyperspecial points of $\caB(\bfG,k)$.
\item[(iii)] If $(\bfT,\bfU)$, $(\bfT',\bfU')$ are Coxeter pairs with ${\bf x}_{\bfT} = {\bf x}_{\bfT'}$, then there is some $g \in \caG_{\bf x}(\caO_{\breve k})$ with ${}^g(\bfT,\bfU) = (\bfT',\bfU')$.
\end{itemize}
\end{lm}

\begin{proof}
For (i), we may pass to the adjoint group of $\bfG$. Then $\bfG \cong \prod_i \bfG_i$, with $\bfG_i$ simple and of adjoint type, and $\caB(\bfG,\breve k) \cong \prod_i \caB(\bfG_i,\breve k)$. It thus suffices to prove the result in the case $\bfG$ is simple and of adjoint type. In this case all Coxeter tori are $\bfG(k)$-conjugate by \cite[Prop. 8.1(i)]{Reeder_08}. Moreover, by \cite[Thm. 3.4.1]{deBacker_06}, there is at least the $\bfG(k)$-conjugacy class of Coxeter tori attached to a (any) hyperspecial vertex ${\bf x}$ of $\caB(\bfG,\breve k)$ and a (any) Coxeter torus in $\caG_{\bf x} \otimes_{\caO_k} \bF_q$.
Now (ii) and (iii) follow from (i), \cite[Thm. 3.4.1]{deBacker_06} and the fact that in a finite Weyl group all twisted Coxeter elements are conjugate.
\end{proof}

If $\bfT \in {\rm Tori}_{\breve k/k}(\bfG)$ is arbitrary and ${\bf x} \in \caA(\bfT, k)$, then we have the torus $\caT \subseteq \caG_{\bf x}$, and the subgroup $\bT_r \subseteq \bG_r$ for any $r>0$ (as in Section \ref{sec:subschemes_of_bG}). This gives the two Weyl groups 
\[
W_{\bf x}(\bfT,\bfG) := W(\bT_1,\bG_1) \subseteq W(\bfT,\bfG).
\]
attached to $\bfT$ (and $\bf x$). We denote them by $W_{\bf x}$ and $W$ if $\bfT,\bfG$ are clear from the context. If $\bf x$ is a hyperspecial vertex -- which is by Lemma \ref{lm:Coxeter_hyperspecial} necessarily the case whenever $\bfT$ is Coxeter -- then the situation simplifies to $\bG_1 = L_1^+\caG_{\bf x} = \caG_{\bf x} \otimes_{\bF_q} \obF$ and $W_{\bf x} = W$. 

\subsection{A condition on $q$}\label{ssec:condition}
Identifying $X_\ast(\bG_m)$ with $\bZ$, we have the perfect pairing of $\bZ$-lattices 
\[ 
X^\ast(\bT_1)  \times X_\ast(\bT_1) \rar \bZ, \quad \alpha ,\nu \mapsto \langle \alpha,\nu \rangle
\]
such that $\langle F\alpha, \nu \rangle = \langle \alpha,F \nu\rangle$ for all $\alpha,\nu$. This pairing also induces the analogous pairing for $\bT_1^{\ad}$ (where $\bfT^{\ad}$ is the image of $\bfT$ in the adjoint quotient of $\bfG$) and for the $\bQ$-vector spaces obtained by extension of scalars.

\smallskip

Recall that the choice of $\bfU$ is equivalent to the choice of a set of simple roots $\Delta \subseteq \Phi(\bfT,\bfG)$, and it endows $W$ with a structure of a Coxeter group. The simple roots $\Delta$ form a basis of $X_\ast(\bT_1^{\ad})_\bQ$. We will denote by $\{\alpha^\ast \colon \alpha \in \Delta\} \subseteq X_\ast(\bT_1^{\ad})_\bQ$ the set of fundamental coweights, defined as the basis of $X_\ast(\bT_1^{\ad})_\bQ$ dual to $\Delta$. We will prove the orthogonality relations of Coxeter-type Deligne--Lusztig characters under the following restriction on $q$:
\begin{equation}\label{cond:condition_on_rootsystem} 
q > M := \max\limits_{\alpha \in \Delta} \, \langle \alpha_0, \alpha^\ast \rangle 
\end{equation}
Note that this condition depends only on the group $\bfG_{\breve k}$ and on no other choice (like that of $\Delta$). For the irreducible types we can explicitly compute the constant $M$ from Condition \eqref{cond:condition_on_rootsystem}: type $A_n$: $M = 1$; types $B_n,C_n$: $M=2$; types $G_2,E_6$: $M = 3$; types $F_4, E_7$: $M = 4$; type $E_8$: $M = 6$. In general, the constant $M$ for $\bfG_{\breve k}$ is the maximum of the values of $M$ over all connected components of the Dynkin diagram of $\bfG_{\breve k}$. In particular \eqref{cond:condition_on_rootsystem} holds whenever $q > 5$.

\section{Deep level Deligne--Lusztig induction}\label{sec:DL_deep_induction}

We work in the setup of Section \ref{sec:groups_setup}. In particular, the reductive $\breve k$-split group $\bfG/k$, the point ${\bf x} \in \caB(\bfG,k)$, and the integer $r\geq 1$ are fixed. We omit ${\bf x}$ and $r$ from notation, and write $\caG$ for the $\caO_k$-group $\caG_{\bf x}$, and $\bG$, $\bT$, etc. for $\bG_r$, $\bT_r$, etc.

\subsection{The schemes $S_{\bfT,\bfU}$}\label{sec:DL_schemes_deep}
Let $\bfT \in {\rm Tori}_{\breve k/k}(\bfG)$, such that ${\bf x} \in \caA(\bfT, k)$. Let $\bfB = \bfT \bfU$ be a Borel subgroup, defined over $\breve k$, containing $\bfT$, and with unipotent radical $\bfU$. As in Section \ref{sec:subschemes_of_bG}, we have the corresponding closed subgroups $\bU \subseteq \bG$, defined over $\obF$. Following \cite{Lusztig_04, CI_MPDL}, consider the $\obF$-scheme
\[
\xymatrix{
S_{{\bf x}, \bfT,\bfU,r} \ar[r] \ar@{_(->}[d] & F\bU \ar@{_(->}[d] \\ \bG \ar[r]^{{\rm L}_\bG} & \bG}
\]
where $L_\bG \colon \bG \rar \bG$, $ g \mapsto g^{-1}F(g)$ is the Lang map. We usually write $S_{\bfT,\bfU}$ for $S_{{\bf x},\bfT,\bfU,r}$, as ${\bf x}$, $r$ remain constant throughout the article. The finite group $G \times T = \bG(\bF_q) \times \bT(\bF_q)$ acts on $S_{\bfT,\bfU}$ by $(g,t) \colon x \mapsto gxt$. For a character $\theta \colon T \rar \cool^\times$, we obtain the virtual $G$-representation
\[
R_{\bfT,\bfU}(\theta) = R_{{\bf x},\bfT,\bfU,r}(\theta) := \sum_{i \in \bZ} (-1)^i H_c^i(S_{\bfT,\bfU},\cool)_\theta,
\]
where the subscript $\theta$ indicates that we take the $\theta$-isotypic component. By inflation, we may regard $R_{\bfT,\bfU}(\theta)$ as a virtual smooth $\caG(\caO_k)$-representation.

\begin{rem}\label{rem:relation_to_Xwb}
The varieties $S_{\bfT,\bfU}$ are closely related to classical Deligne--Lusztig varieties. Indeed, the group $\bU \cap F\bU$ acts by right multiplication on $S_{\bfT,\bfU}$ and we may form the quotient $X_{\bfT,\bfU} = S_{\bfT,\bfU}/\bU \cap F\bU$. If $r = 1$, then $X_{\bfT,\bfU}$ is equal to the classical Deligne--Lusztig variety $\widetilde X_{\bT_1 \subseteq \bB_1}$ attached to the reductive $\bF_q$-group $\bG_1$, cf. \cite[1.17(ii), 1.19]{DeligneL_76} and Remark \ref{rem:reductive_quotient_r1}.
\end{rem}

\begin{rem}\label{rem:rel_to_Xwb_2}
In the light of Remark \ref{rem:relation_to_Xwb}, $X_{\bfT,\bfU}$ are deep level analogs of classical Deligne--Lusztig varieties. Moreover, the fibers of the morphism $S_{\bfT,\bfU} \rar X_{\bfT,\bfU}$ are isomorphic to the perfection of a fixed finite-dimensional affine space over $\bF_q$. It follows that  $H_c^\ast(S_{\bfT,\bfU}) = H_c^\ast(X_{\bfT,\bfU})$. In turn, $X_{\bfT,\bfU}$ is the $r$-truncated integral version of the $p$-adic Deligne--Lusztig spaces $X_w(b)$ (or rather their coverings $\dot X_{\dot w}(b)$) defined in \cite{Ivanov_DL_indrep}. Cf. Section \ref{sec:DL_schemes_xw} below.
\end{rem}

\subsection{Main result}\label{sec:orthogonality_relations}

Let $(\bfT,\bfU)$, $(\bfT',\bfU')$ be two pairs where $\bfT,\bfT' \in {\rm Tori}_{\breve k/k}(\bfG)$ satisfy ${\bf x} \in \caA(\bfT,k) \cap \caA(\bfT',k)$, and $\bfU$ (resp. $\bfU'$) is the unipotent radical of a $\breve k$-rational Borel subgroup of $\bfG$ containing $\bfT$ (resp. $\bfT'$). We have the groups $\caT,\bT,T,\caU,\bU$ attached to $\bfT,\bfU$ by Section \ref{sec:subschemes_of_bG}, and similarly for $\bfT',\bfU'$. 

Using Remark \ref{rem:relation_to_Xwb}, the classical orthogonality relations for Deligne--Lusztig characters \cite[Thm. 6.8]{DeligneL_76} can be expressed as follows: for $r=1$ and any characters $\theta \colon T \rar \overline \bQ_\ell^\times$, $\theta' \colon T' \rar \overline\bQ_\ell^\times$, we have 
\begin{equation}\label{eq:orthogonality_classical}
\langle R_{\bfT,\bfU}(\theta), R_{\bfT',\bfU'}(\theta') \rangle_G = \# \left\{w \in W(\bT_1,\bT_1')^F \colon \theta' = {}^w\theta \right\},
\end{equation}
where 
\begin{equation}\label{eq:transporter_TTprime}
W_{\bf x}(\bfT,\bfT') = W(\bT_1, \bT'_1) = \bT_1 \backslash \{g \in \bG_1 \colon {}^g \bT_1' = \bT_1 \} 
\end{equation}
is the transporter principal homogeneous space under $W_{\bf x}(\bfT,\bfG)$. We may ask for a generalization of this to deeper levels.

\begin{quest}\label{quest:general_orthogonality} Does \eqref{eq:orthogonality_classical} hold in general, that is for arbitrary $\bfG$, ${\bf x}$, $r$, $\bfT$, $\bfT'$, $\bfU$, $\bfU'$, $\theta$, $\theta'$? 
\end{quest}

\begin{rem}\label{rem:known_cases} The answer to Question \ref{quest:general_orthogonality} is affirmative in the following cases:
\begin{itemize}
\item[(i)] If $r=1$ by \cite[Thm. 6.8]{DeligneL_76}.
\item[(ii)] If $r \geq 2$, and $\theta$ or $\theta'$ is \emph{regular} in the sense of \cite{Lusztig_04} (roughly, ``regular'' = ``highly non-trivial on $\ker(T_r \rar T_{r-1})$'') by \cite{Lusztig_04} if $\caG$  reductive and $\charac k > 0$, resp. \cite{Stasinski_09} if $\caG$ reductive and $\charac k = 0$, resp. \cite{CI_MPDL} in general. 
\item[(iii)] If $\bfG =$ inner form of $\bfGL_n$, and $(\bfT,\bfU)$, $(\bfT',\bfU')$ are Coxeter pairs, by \cite[Thm. 3.1]{CI_loopGLn}.
\end{itemize}
\end{rem}

One might conjecture an affirmative answer to Question \ref{quest:general_orthogonality} in general, but there is not enough evidence beyond the known special cases. In this article we concentrate on the Coxeter case, and prove the following generalization of Remark \ref{rem:known_cases}(iii).

\begin{thm}\label{thm:higher_mackey}
Suppose $\bfG$ is unramified, and $(\bfT,\bfU)$, $(\bfT',\bfU')$ are Coxeter pairs with $\bf x = {\bf x}_{\bfT} = {\bf x}_{\bfT'}$. Suppose that condition \eqref{cond:condition_on_rootsystem} holds for $q$ and the root system of $\bfG$. Then for all $r \geq 1$ and all $\theta \colon T \rar \overline \bQ_\ell^\times$, $\theta' \colon T' \rar \overline \bQ_\ell^\times$, we have
\begin{equation}\label{eq:orthogonality_relation_in_theorem}
\langle R_{\bfT,\bfU}(\theta), R_{\bfT',\bfU'}(\theta') \rangle_G = \# \left\{w \in W_{\bf x}(\bfT,\bfT')^F \colon \theta' = {}^w\theta \right\},
\end{equation}
where $W_{\bf x}(\bfT,\bfT')$ is as in \eqref{eq:transporter_TTprime}.
\end{thm}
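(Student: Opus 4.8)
The plan is to follow the strategy indicated in the introduction: reduce the inner product
$\langle R_{\bfT,\bfU}(\theta), R_{\bfT',\bfU'}(\theta')\rangle_G$ to an Euler characteristic computation on the quotient scheme $\Sigma = G\backslash(S_{\bfT,\bfU}\times S_{\bfT',\bfU'})$, and evaluate that Euler characteristic by finding larger torus actions with finitely many fixed points. Concretely, by standard manipulations (Lang's theorem, the formula for $H_c^\ast$ of a quotient by a free finite group action, and the projection formula) one has
\[
\langle R_{\bfT,\bfU}(\theta), R_{\bfT',\bfU'}(\theta')\rangle_G
= \dim\bigl(H_c^\ast(\Sigma)_{\theta^{-1}\boxtimes\theta'}\bigr)^{\!*},
\]
where $T\times T'$ acts on $\Sigma$ and we take the appropriate isotypic component; $\Sigma$ is then identified with (a quotient of) $\{(x,y)\in F\bU\times F\bU' : x\cdot(\text{something})\cdot y^{-1}\in\bG\}$, i.e. with $\bT\backslash\bigl(F\bU\cdot g_0\cdot F\bU'\bigr)/\bT'$-type data for a representative $g_0$ of a class in $W_{\bf x}(\bfT,\bfT')$. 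First I would set up this reduction carefully in the perfect/deep-level setting, using the decomposition of $\bG$ according to the $W$-valued invariant of a pair of Borels, so that $\Sigma$ breaks up into locally closed pieces $\Sigma_w$ indexed by $w$ in (a set of double cosets refining) $W_{\bf x}$; the $T\times T'$-action is visible on each $\Sigma_w$.

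Next, the heart of the argument: on each piece $\Sigma_w$ one wants to extend the $T\times T'$-action (equivalently, the relevant quotient torus action after quotienting by the part acting trivially) to an action of a \emph{bigger} torus $\bT^{\mathrm{big}}$ over $\obF$ with only finitely many fixed points, so that by \eqref{eq:Euler_char} the Euler characteristic $H_c^\ast(\Sigma_w)$ equals $H_c^\ast(\Sigma_w^{\bT^{\mathrm{big}}})$, a sum over isolated fixed points. This is where Sections~\ref{sec:Lusztigs_extension} and \ref{sec:another_extension_of_action} come in, providing two different extensions of the action (the schemes $\widehat\Sigma_w$, $\widetilde\Sigma_w$), and Section~\ref{sec:isomorphism} comparing them via the subschemes $Y_{v,w}$, $Z_{v,w}$. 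The idea, going back to \cite{Lusztig_04} and \cite{DeligneL_76}, is that the one-parameter subgroups of $\bT_1$ acting on the Coxeter-type variety can be "deformed" along the Moy--Prasad filtration, using the Coxeter condition to guarantee that the relevant cocharacter pairings with roots stay in a controlled range. This is exactly where Condition~\eqref{cond:condition_on_rootsystem}, i.e. $q>M$, enters: it ensures that certain exponentials/commutator terms in the root-group coordinates, which a priori only make sense $\varpi$-adically, can be reorganized into an honest algebraic torus action on the truncated loop group, with no "wraparound" obstruction. I would first do the case $w$ a twisted Coxeter element with $\bfT'=\bfT$ (so $W_{\bf x}(\bfT,\bfT')=W_{\bf x}(\bfT,\bfG)$) to isolate the mechanism, then handle general $\bfT'$ by the conjugation in Lemma~\ref{lm:Coxeter_hyperspecial}(iii), which shows any two Coxeter pairs over the same hyperspecial vertex are $\caG_{\bf x}(\caO_{\breve k})$-conjugate.

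The fixed-point count then has to be matched with the right-hand side of \eqref{eq:orthogonality_relation_in_theorem}. After the torus-extension step, $H_c^\ast(\Sigma_w)$ is supported on finitely many points, each contributing a character of $T\times T'$ of the form a one-dimensional $\theta_1\boxtimes\theta_1'$; extracting the $\theta^{-1}\boxtimes\theta'$-isotypic component and summing over $w$, the pieces with $\theta'\neq{}^w\theta$ must contribute $0$ and those with $\theta'={}^w\theta$ must each contribute $+1$. I expect this bookkeeping to require: (a) showing the non-matching pieces have vanishing isotypic component (an argument on how $T\times T'$ acts on the fixed loci, using that on a fixed point of $\bT^{\mathrm{big}}$ the residual $T\times T'$-action factors through a character determined by $w$); and (b) an acyclicity/orientation argument — e.g. each surviving fixed locus is a point, or more generally an affine space, so its $H_c^\ast$ contributes exactly $\pm 1$ in the correct degree, the sign being $+1$ after accounting for dimension shifts — paralleling \cite[\S3]{CI_loopGLn} but now with the extra root-theoretic care. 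The main obstacle, as the authors themselves flag, is precisely the torus-extension construction in the non-type-$A$ case: in type $A$ one has $M=1$ and the deformation is essentially unobstructed, whereas for general $\bfG$ one must construct the larger torus action and verify it is well-defined and has isolated fixed points, which is genuinely the content of Sections~\ref{sec:Lusztigs_extension}--\ref{sec:isomorphism} and the reason Condition~\eqref{cond:condition_on_rootsystem} is needed; the comparison of the two extensions (the $Y_{v,w}$ vs.\ $Z_{v,w}$ analysis) is the delicate technical core, since these subschemes are not known to be affine and one must invoke the refined form of \eqref{eq:Euler_char} discussed after \cite[10.15]{DigneM_91}.
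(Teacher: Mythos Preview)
Your overall architecture is right---reduce to the Euler characteristic of $\Sigma$, stratify into cells $\Sigma_v$, and extend the $T\times T'$-action to a positive-dimensional torus with isolated fixed points---and this is indeed how the paper proceeds. But there are two concrete points where your proposal diverges from what actually happens, and the second is a genuine gap.

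First, you misidentify the role of Condition~\eqref{cond:condition_on_rootsystem}. It is \emph{not} needed to make the extended torus action well-defined: the groups $H_v$, $H_v'$, $H_v''$, $H_v'''$ of Lemmas~\ref{lm:Lusztigs_extension_of_action} and~\ref{lm:new_simple_extensions} act on $\widehat\Sigma_v$, $\widetilde\Sigma_v$ for free, with no hypothesis on $q$. The condition $q>M$ enters only in Section~\ref{sec:regularity}, specifically Proposition~\ref{prop:existence-of-good-coxeter}, where it guarantees that the one-dimensional torus $H_{\alpha^\ast}^\circ$ (pulled back along the Lang map from a fundamental coweight) is \emph{regular}, i.e.\ not contained in $\ker(\gamma)$ for any root $\gamma$. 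This is what forces the fixed-point set to be finite; without it the extended torus could still act, but with positive-dimensional fixed locus.

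Second, and more seriously, you are missing the mechanism that makes the argument go beyond type~$A$. The extended actions on a given cell ${}^{\bU,\bU}\Sigma_v$ have isolated fixed points only when ${}^{v^{-1}}\bfU\cap\bfU$ (or a variant) lies in a proper Levi, and this is \emph{not} automatic for $v\in W^F$. The paper does not compute each $h_{0,v}$ directly. Instead it introduces the isomorphism $\alpha\colon{}^{\bU,F^a\bfU}\Sigma\to{}^{\bU,F^{a+1}\bfU}\Sigma$ and proves (Proposition~\ref{prop:compare_Coxeter_cells}) that $h_{a,v}=h_{a+1,v}$ for almost all $a$; this is where the $Y_{v,w}$, $Z_{v,w}$ enter, together with a separate combinatorial non-emptiness criterion (Proposition~\ref{prop:sigma_non_empty}) saying that a nonempty cell either has $v\in W^F$ or satisfies the Levi condition. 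One then ``walks'' in $a$ until reaching a value where Lemma~\ref{lm:known_cells} applies and the cell can be evaluated directly (this is where the second extension $\widetilde\Sigma_v$ is used). Your description of $Y_{v,w}$ versus $Z_{v,w}$ as ``comparing the two extensions'' is not what they do: both sit inside $\widehat\Sigma$-type resolutions and are stable under the \emph{same} kind of Lusztig extension; their role is to transport cohomology along $\alpha$. Without this walking argument and the non-emptiness analysis, you have no way to handle the cells $\Sigma_v$ with $v\in W^F$ for which the direct torus-fixed-point method fails.
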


We will show Theorem \ref{thm:higher_mackey} when $(\bfT,\bfU) =(\bfT',\bfU')$ is a given Coxeter pair and $W$ is irreducible. The various reductions needed to deduce the theorem from this particular case are studied in the next section.

\section{Reductions}

The purpose of this section is to show that it is enough to prove Theorem \ref{thm:higher_mackey} when $(\bfT,\bfU) =(\bfT',\bfU')$ is a given Coxeter pair and $W$ is irreducible. There is a small price to pay, and one will actually need to show a stronger statement, namely Theorem~\ref{thm:sigmav}, which behaves well with respect to our reductions. 

\subsection{Changing Coxeter pairs}\label{sec:DL_schemes_xw}
Suppose $\bfG$ is unramified and $\bf x$ is hyperspecial. Then $\caG$ is a reductive group over $\caO_k$ and we have $\bG = L^+_r\caG$ (cf. Remark \ref{rem:reductive_quotient_r1}). Let $\bfT_0 \subseteq \bfB_0 \subseteq \bfG$ be as in Section \ref{sec:Cox_tori_setup}, such that ${\bf x} \in \caA(\bfT_0,k)$ and $W_0 = N_{\bfG}(\bfT_0)(\breve k)/\bfT_0(\breve k)$. Then $\caT_0 \subseteq \caB_0 \subseteq \caG$ are a maximal torus and a Borel subgroup containing it and defined over $\caO_k$. Let $\bfU_0$ (resp. $\caU_0$) be the unipotent radical of $\bfB$ (resp. $\caB_0$).

The $\caO_k$-group $\caG$ is quasi-split, $\caB_0 \subseteq \caG$ is a rational Borel subgroup, and the quotient $\caG/\caB_0$ is projective over $\caO_k$ \cite[Thm. 2.3.6]{Conrad_14}. Then $\caG$ admits a Bruhat decomposition in the following sense: letting $\caG$ act diagonally on $(\caG/\caB_0)^2$, there are $\caG$-stable (reduced) subschemes $\caO(w) \subseteq (\caG/\caB_0)^2$ for each $w\in W_0$, flat over $\caO_k$, such that for any geometric point $x \in \Spec \caO_k$, the fiber $\caO(w)_x$ is the $\caG_x$-orbit in $(\caG/\caB_0)^2_x$ like in the usual Bruhat decomposition. We have the following integral analog of \cite[Def. 8.3]{Ivanov_DL_indrep}.

\begin{Def}\label{def:Xc1}
Let $w \in W_0$ and $\dot w\in N_{\caG}(\caT_0)(\caO_{\breve k})$. Define the \emph{integral $p$-adic Deligne--Lusztig space} $X_w^{\caG}(1)$, and $\dot X_{\dot w}^{\caG}(1)$ by Cartesian diagrams of functors on $\Perf_{\obF}$

\centerline{\begin{tabular}{cc}
\begin{minipage}{2in}
\begin{displaymath}
\leftline{
\xymatrix{
X_w^{\caG}(1) \ar[r] \ar[d] & L^+\caO(w) \ar[d]\\
L^+(\caG/\caB_0) \ar[r]^-{(\id,F)} & L^+(\caG/\caB_0) \times L^+(\caG/\caB_0) 
}
}
\end{displaymath}
\end{minipage}
& \qquad\qquad  and \qquad\qquad
\begin{minipage}{2in}
\begin{displaymath}
\leftline{
\xymatrix{
\dot X_{\dot w}^{\caG}(1) \ar[r] \ar[d] & L^+\dot \caO(\dot w) \ar[d]\\
L^+(\caG/\caU_0) \ar[r]^-{(\id,F)} & L^+(\caG/\caU_0) \times L^+(\caG/\caU_0) 
}
}
\end{displaymath}
\end{minipage}
\end{tabular}
}
\noindent Similarly, replacing $L^+$ by $L^+_r$ everywhere, define their $r$-truncations $X_w^{\caG,r}(1)$, $\dot X_{\dot w}^{\caG,r}(1)$.
\end{Def}

The functors $X_w^{\caG}(1)$, $\dot X_{\dot w}^{\caG,r}(1)$ are representable by (perfect) $\obF$-schemes, the latter are of perfectly finite presentation. If $\dot w$ maps to $w$, then there is a natural map $\dot X_{\dot w}^{\caG,r}(1) \rar X_w^{\caG,r}(1)$; $G \times T$ acts on $\dot X_{\dot w}^{\caG,r}(1)$, $G$ acts on $X_w^{\caG,r}(1)$, and the above map is $G$-equivariant finite \'etale $T$-torsor. Recall the definition of the space $X_{\bfT,\bfU}$ from Remark \ref{rem:relation_to_Xwb}.

\begin{lm}\label{lm:XTU_Xw1}
Suppose $\bfT \in {\rm Tori}_{\breve k/k}(\bfG)$ such that ${\bf x} \in \caA(\bfT,k)$. Then $X_{\bfT,\bfU} \cong \dot X_{\dot w}^{\caG,r}(1)$ (equivariant for the $G \times T$-actions), where we identify $W(\bfT,\bfG)$ with $W_0$, $w \in W_0$ is the element satisfying $F\bfU = {}^w\bfU$, and $\dot w \in N_{\caG}(\caT_0)(\caO_{\breve k})$ is an arbitrary lift of $w$. 
\end{lm}
\begin{proof}
This has the same proof as \cite[1.19]{DeligneL_76}. There are no subtleties due to the loop functor, cf. the similar results of \cite[Prop. 12.1 and Lem. 12.3]{Ivanov_DL_indrep}.
\end{proof}

To $\dot X_{\dot w}^{\caG,r}(1)$ we may apply the technique of Frobenius-cyclic shift. Let $\ell$ denote the length function on the Coxeter group $(W_0,S_0)$. 

\begin{lm}\label{lm:Fcyclic_shift}
Suppose $w = w_1w_2$, $w' = w_2F(w_1) \in W_0$, such that $\ell(w) = \ell(w_1) + \ell(w_2) = \ell(w')$. Then there is a $G$-equivariant isomorphism $X_w^{\caG,r}(1) \cong X_{w'}^{\caG,r}(1)$. If $\dot w$, $\dot w'$, $\dot w_1$, $\dot w_2 \in \caG(\caO_{\breve k})$ are lifts of $w,w',w_1,w_2$, satisfying $\dot w = \dot w_1 \dot w_2$, $\dot w' = \dot w_2 F(\dot w_1)$, then there is a $G \times T$-equivariant isomorphism $\dot X_{\dot w}^{\caG,r}(1) \cong \dot X_{\dot w'}^{\caG,r}(1)$.
\end{lm}

\begin{proof}
The same proof as in \cite[1.6]{DeligneL_76} applies. Again, the use of the (positive, truncated) loop functor causes no problems, cf. \cite[Lem. 8.16]{Ivanov_DL_indrep}.
\end{proof}

As a corollary we deduce:

\begin{cor}\label{cor:replaceTU_TU}
Suppose $\bfG$ is unramified, and $(\bfT,\bfU)$, $(\bfT',\bfU')$ are Coxeter pairs with ${\bf x} = {\bf x}_\bfT = {\bf x}_{\bfT'}$ (in particular, ${\bf x}$ hyperspecial). Then $X_{\bfT,\bfU} \cong X_{\bfT',\bfU'}$ ($G\times T \cong G \times T'$-equivariantly). In particular, $H^\ast_c(S_{\bfT,\bfU}) \cong H^\ast_c(S_{\bfT',\bfU'})$. To show Theorem \ref{thm:higher_mackey}, it suffices to do so under the additional assumption $(\bfT',\bfU') = (\bfT,\bfU)$ is a fixed Coxeter pair.
\end{cor}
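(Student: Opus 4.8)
The plan is to assemble Corollary~\ref{cor:replaceTU_TU} from the two preceding lemmas together with Lemma~\ref{lm:Coxeter_hyperspecial}. First I would use Lemma~\ref{lm:Coxeter_hyperspecial}(i) to note that the common vertex ${\bf x} = {\bf x}_\bfT = {\bf x}_{\bfT'}$ is automatically hyperspecial, so that $\caG = \caG_{\bf x}$ is reductive over $\caO_k$, $\bG = L^+_r\caG$ (Remark~\ref{rem:reductive_quotient_r1}), and we are in the situation of Section~\ref{sec:DL_schemes_xw} where the integral Deligne--Lusztig spaces $X_w^{\caG,r}(1)$, $\dot X_{\dot w}^{\caG,r}(1)$ and the Frobenius-cyclic shift machinery (Lemmas~\ref{lm:XTU_Xw1} and \ref{lm:Fcyclic_shift}) are available.

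Next I would translate the two Coxeter pairs into Weyl group data. Fixing $\bfT_0 \subseteq \bfB_0$ as in Section~\ref{sec:Cox_tori_setup} with ${\bf x} \in \caA(\bfT_0,k)$, Lemma~\ref{lm:XTU_Xw1} identifies $X_{\bfT,\bfU} \cong \dot X_{\dot w}^{\caG,r}(1)$ and $X_{\bfT',\bfU'} \cong \dot X_{\dot w'}^{\caG,r}(1)$, where $w, w' \in W_0$ are the elements with $F\bfU = {}^w\bfU$, $F\bfU' = {}^{w'}\bfU'$ (after identifying $W(\bfT,\bfG) = W(\bfT',\bfG) = W_0$), and $\dot w, \dot w'$ are arbitrary lifts. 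Because $(\bfT,\bfU)$ and $(\bfT',\bfU')$ are Coxeter pairs, $w\sigma$ and $w'\sigma$ are twisted Coxeter elements of $(W_0,\sigma)$; by the classical fact (used already in the proof of Lemma~\ref{lm:Coxeter_hyperspecial}) that any two twisted Coxeter elements of a finite Weyl group are related by a chain of length-preserving elementary $\sigma$-cyclic shifts $w_1w_2 \mapsto w_2 F(w_1)$ with $\ell(w_1w_2) = \ell(w_1)+\ell(w_2)$, I can connect $w$ to $w'$ by such a chain. Along each step, I lift the factorization to $N_{\caG}(\caT_0)(\caO_{\breve k})$ — choosing $\dot w_1, \dot w_2$ with $\dot w = \dot w_1\dot w_2$ and setting the next lift to $\dot w_2 F(\dot w_1)$ — and apply the $G\times T$-equivariant isomorphism of Lemma~\ref{lm:Fcyclic_shift}. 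Composing these gives $X_{\bfT,\bfU} \cong X_{\bfT',\bfU'}$ compatibly with the $G\times T$-actions (one must track the identification $T \cong T'$ induced by the chain of lifts, but this is built into the statement of Lemma~\ref{lm:Fcyclic_shift}). Passing to $H^\ast_c$ and using $H^\ast_c(S_{\bfT,\bfU}) = H^\ast_c(X_{\bfT,\bfU})$ from Remark~\ref{rem:rel_to_Xwb_2} yields the cohomological isomorphism, whence $R_{\bfT,\bfU}(\theta) = R_{\bfT',\bfU'}(\theta)$ for matching characters, and the right-hand side of \eqref{eq:orthogonality_relation_in_theorem} is unchanged since $W_{\bf x}(\bfT,\bfT')$, $W_{\bf x}(\bfT,\bfT)$ differ only by the conjugation identification. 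This reduces Theorem~\ref{thm:higher_mackey} to the case $(\bfT',\bfU') = (\bfT,\bfU)$.

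The main subtlety — not really an obstacle, but the point requiring care — is the bookkeeping of the torus identifications: a priori $\bfT$ and $\bfT'$ need not be $\caG(\caO_{\breve k})$-conjugate compatibly with $\bfU$, $\bfU'$, so one cannot simply say ``$X_{\bfT,\bfU}$ and $X_{\bfT',\bfU'}$ are the same space''; instead the chain of Frobenius-cyclic shifts produces an \emph{explicit} isomorphism of schemes that intertwines the $T$- and $T'$-actions via the composite of the conjugations by the $\dot w_i$'s. One should also confirm that the starting point is legitimate, i.e. that $F\bfU = {}^w\bfU$ for the \emph{same} $w$ up to the identification $W(\bfT,\bfG) \cong W_0$ — this is exactly the content of the element $w_{\bfT,\bfB}$ from Section~\ref{sec:Cox_tori_setup} being a twisted Coxeter element, so no new input is needed. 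Everything else is a direct citation of the lemmas just proved, so the corollary follows formally.
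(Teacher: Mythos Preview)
Your proposal is correct and follows the same route as the paper: translate both sides to $\dot X_{\dot w}^{\caG,r}(1)$ via Lemma~\ref{lm:XTU_Xw1}, connect the two twisted Coxeter elements by a chain of length-preserving $\sigma$-cyclic shifts and apply Lemma~\ref{lm:Fcyclic_shift} step by step, then invoke Remark~\ref{rem:rel_to_Xwb_2} for the cohomological statement. The only cosmetic difference is that the paper isolates the independence-of-lift step explicitly (showing $\dot X_{\dot w_1}^{\caG,r}(1) \cong \dot X_{\dot w_2}^{\caG,r}(1)$ for any two lifts of the same $w$, via Lang's theorem for $L^+_r\caT_0$ with Frobenius ${\rm Ad}(w)\circ F$ as in \cite[p.~111]{DeligneL_76}), whereas you absorb this into the ``arbitrary lift'' clause of Lemma~\ref{lm:XTU_Xw1}.
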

\begin{proof}
We prove the first statement. By Lemma \ref{lm:XTU_Xw1} it suffices to show that whenever $w,w'\in W_0$ are two twisted Coxeter elements, $\dot X_{\dot w}^{\caG,r}(1) \cong \dot X_{\dot w'}^{\caG,r}(1)$. First, when $\dot w_1,\dot w_2 \in N_{\caG}(\caT_0)(\caO_{\breve k})$ are two lifts of $w$, then $\dot X_{\dot w_1}^{\caG,r}(1) \cong \dot X_{\dot w_2}^{\caG,r}(1)$ equivariantly (same argument as on \cite[p. 111]{DeligneL_76}, along with an application of Lang's theorem to the connected $\obF$-group  $L^+_r\caT_0$ with Frobenius ${\rm Ad}(w) \circ F$). Using this, the first statement of the corollary follows from Lemma \ref{lm:Fcyclic_shift} along with the fact that all twisted Coxeter elements are conjugate by a sequence of cyclic shifts in $W_0$ (cf. the corresponding discussion in \cite[\S8.4]{Ivanov_DL_indrep}). 

The second claim follows from the first and Remark \ref{rem:rel_to_Xwb_2}, and the third claim follows from the second.
\end{proof}

\subsection{First step towards the proof of Theorem \ref{thm:higher_mackey}}\label{sec:general_strategy}

In the proof of Theorem \ref{thm:higher_mackey} we follow the general strategy of \cite[\S6]{DeligneL_76} and \cite{Lusztig_04}. Let the setup be as in the beginning of Section \ref{sec:orthogonality_relations}. Attached to $(\bfT,\bfU)$, $(\bfT',\bfU')$ we may consider the $\obF$-scheme
\begin{align}
\label{eq:definition_of_Sigma}\Sigma := {}^{\bU,\bU'}\Sigma &:= G \backslash (S_{\bfT,\bfU} \times S_{\bfT',\bfU'}) \\ 
\nonumber &\mbox{}\,\,\cong \{(x,x',y) \in F\bU \times F\bU' \times \bG \colon xF(y) = yx' \}.
\end{align}
We will write ${}^{\bU,\bU'}\Sigma$, whenever the choice of $\bU,\bU'$ is relevant, and simply $\Sigma$ whenever it is clear from context). In \eqref{eq:definition_of_Sigma} the group $G$ acts diagonally on $S_{\bfT,\bfU} \times S_{\bfT',\bfU'}$, and the second isomorphism is given by $(g,g') \mapsto (x,x',y)$ with $x = g^{-1}F(g)$, $x' = g^{\prime -1}F(g')$, $y = g^{-1}g'$, just as in \cite[6.6]{DeligneL_76}. Now $T\times T'$ acts on $\Sigma$ by $(t,t') \colon (x,x',y) \mapsto (txt^{-1},t'x't^{\prime -1}, tyt^{\prime -1})$ and an application of the K\"unneth formula shows that 
\[
\left\langle R_{\bfT,\bfU}(\theta), R_{\bfT',\bfU'}(\theta')\right\rangle_G = \dim_{\cool} H_c^\ast(\Sigma,\cool)_{\theta \otimes \theta'}.
\]
Let $\pr \colon \bG = \bG_r \rar \bG_1$ denote the natural projection. We have the locally-closed decomposition $\bG_1 = \coprod_{v \in W(\bT_1,\bT'_1)} \bU_1 \dot v \bT_1' \bU_1'$. This induces a $T\times T'$-stable locally closed decomposition $\Sigma = \coprod_{v \in W(\bT_1,\bT'_1)} \Sigma_v$, with 
\begin{equation}\label{eq:Sigma_v_def}
\Sigma_v = \{(x,x',y) \in \Sigma \colon y \in \pr^{-1}(\bU_1 \dot v \bT_1' \bU_1') \}, 
\end{equation}
where $\dot v$ is an arbitrary lift of $v$ to $\bG(\obF)$ fixed once and for all.
To prove formula \eqref{eq:orthogonality_relation_in_theorem} for the given $\bfT,\bfT',\bfU,\bfU',\theta,\theta'$ it suffices to show
\begin{equation}\label{eq:sigma_v_coh}
\dim_{\cool} H_c^\ast(\Sigma_v,\cool)_{\theta\otimes\theta'} = \begin{cases} 1 & \text{if $F(v) = v$ and $\theta' = {}^v\theta$,} \\ 0 & \text{otherwise.} \end{cases} 
\end{equation}

We shall show that a stronger statement holds for a specific choice of a Coxeter pair. Let $\bZ = Z(\bG)$ be the center of $\bG$ and $Z := \bZ^F$ be its rational points. The group $Z$ embeds diagonally in $T \times T'$ and its action on $\Sigma$ (hence on its cohomology) is trivial. The action of $T \times^Z T'$ on $\Sigma$ extends to an action of $(\bT \times^\bZ \bT')^F$ and the cohomology of the cell $\Sigma_v$ for that action is given by the following theorem.

\begin{thm}\label{thm:sigmav}
Suppose $\bfG$ is unramified, and that condition \eqref{cond:condition_on_rootsystem} holds for $q$ and the root system of $\bfG$. 
Then there exists a Coxeter pair $(\bT,\bU)$ such that for all $v \in W$
\begin{equation}\label{eq:vTF_and_Sigma_v}
H_c^\ast(\Sigma_v^{\bU,\bU}) = \begin{cases} H_c^0((\dot v \bT)^F, \overline \bQ_\ell) & \text{if $v \in W^F$,} \\ 0 &\text{otherwise,} \end{cases}
\end{equation}
as virtual $\cool (\bT \times^\bZ \bT')^F$-modules.
\end{thm}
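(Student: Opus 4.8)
The plan is to follow the strategy of Lusztig \cite{Lusztig_04} and Deligne--Lusztig \cite{DeligneL_76}, which was carried out for type $A$ in \cite{CI_loopGLn}, but now in the generality of an arbitrary unramified $\bfG$. The first step is to fix a convenient Coxeter pair $(\bfT,\bfU)$. By Corollary \ref{cor:replaceTU_TU} we are free to pick any Coxeter pair; the point is to choose $\bfT$ so that the $F$-action on $X_\ast(\bT_1)_\bQ$ is by a twisted Coxeter element whose eigenvalues are controlled, and so that there is a cocharacter $\lambda$ of a ``big'' torus acting on $\Sigma_v$ whose fixed-point subscheme is as small as possible. Concretely, I would build, as in Section \ref{sec:Lusztigs_extension} and Section \ref{sec:another_extension_of_action}, two enlargements $\widehat\Sigma_v$ and $\widetilde\Sigma_v$ of $\Sigma_v$ (or rather of $\Sigma_w$ with $w$ in place of $v$), each carrying an action of an extra one-dimensional torus $\bT_{\mathrm{aux}}$ commuting with the $(\bT\times^\bZ\bT')^F$-action, and relate them to $\Sigma_v$ by maps inducing isomorphisms on $\ell$-adic cohomology. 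Then \eqref{eq:Euler_char} reduces the computation of $H_c^\ast(\Sigma_v)$ to the computation of $H_c^\ast$ of the $\bT_{\mathrm{aux}}$-fixed locus, which should consist of finitely many points together with the torsor $(\dot v\bT)^F$ when $v\in W^F$.

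The heart of the argument is the identification of the fixed-point scheme. Here is where Condition \eqref{cond:condition_on_rootsystem} enters: writing the Moy--Prasad layers $\bU^{i}/\bU^{i+1}$ and $\bG^{i}/\bG^{i+1}$ as sums of root spaces, the auxiliary torus acts on the coordinate of the root space $\bfU_\alpha$ at depth $i$ through a character that is a $\bZ$-linear combination of $\langle \alpha, \lambda\rangle$ and $i$; the condition $q > M$ guarantees that for each root the relevant exponents are pairwise distinct modulo the order of $F$ acting on the relevant eigenspace, so that the equations cutting out the fixed locus force almost all coordinates to vanish. This is precisely the step that in type $A$ is trivial ($M=1$) and that requires real work in general. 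I expect the main obstacle to be the bookkeeping needed to show that, after passing to $\bT_{\mathrm{aux}}$-fixed points in both $\widehat\Sigma_v$ and $\widetilde\Sigma_v$ and combining the two, the remaining scheme is a disjoint union of an affine space bundle over $(\dot v\bT)^F$ (when $v$ is $F$-stable) and, when $v\notin W^F$, something with trivial cohomology — in Lusztig's argument this ``trivial cohomology'' comes from a free action of a connected group (e.g. $\bG_a$ or a torus quotient) on the fixed locus, and one must verify such an action survives in the deep-level, multi-layer setting under the hypothesis on $q$.

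The final step is to pin down the $(\bT\times^\bZ\bT')^F$-module structure on the surviving cohomology. When $v\in W^F$ the fixed locus retracts $(\bT\times^\bZ\bT')^F$-equivariantly onto the torsor $\{(x,x',y)\colon y\in\dot v\bT\}\cong(\dot v\bT)^F$, on which $(t,t')$ acts by $y\mapsto tyt'^{-1}$; identifying this with the regular representation of $(\dot v\bT)^F$ as a $(\bT\times^\bZ\bT')^F$-set gives exactly $H_c^0((\dot v\bT)^F,\overline\bQ_\ell)$ in degree $0$ and nothing else, since the affine-space fibers contribute only a cohomological degree shift which here is zero by the choice of normalization. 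When $v\notin W^F$ the free group action from the previous paragraph makes $H_c^\ast$ vanish as a virtual module. Combining the three steps yields \eqref{eq:vTF_and_Sigma_v}. The reduction to $W$ irreducible and to the diagonal case $(\bfT',\bfU')=(\bfT,\bfU)$, needed to get Theorem \ref{thm:higher_mackey} from Theorem \ref{thm:sigmav}, is handled separately in the ``Reductions'' section via Corollary \ref{cor:replaceTU_TU} and a product decomposition of $\bfG$.
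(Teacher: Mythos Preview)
Your overall shape is right---choose a good Coxeter pair, pass to an auxiliary-torus action on a resolution of $\Sigma_v$, and compute fixed points---but there is a genuine gap in the $v\in W^F$ case, and the paper's argument differs from yours precisely there.

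The torus extensions on $\widehat\Sigma_v$ and $\widetilde\Sigma_v$ only have enough ``room'' to act with isolated fixed points when the relevant intersection (${}^{v^{-1}}\bfU\cap\bfU'$, or ${}^{v^{-1}}\bfU^-\cap F\bfU'^-$, etc.) is contained in a \emph{proper} Levi subgroup of $\bfG$ containing $\bfT$: this is exactly the hypothesis under which Proposition~\ref{prop:regularity_general} produces a regular one-parameter subgroup inside $H_v$ (or $H_v', H_v'', H_v'''$). For a generic $v\in W^F$ with $\bfU'=\bfU$ this fails---already $v=1$ gives ${}^{v^{-1}}\bfU\cap\bfU=\bfU$, which lies in no proper Levi---so the direct fixed-point computation you describe does not go through. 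The paper explicitly flags this: the extensions of Sections~\ref{sec:Lusztigs_extension} and~\ref{sec:another_extension_of_action} alone ``suffice to prove Theorem~\ref{thm:higher_mackey} in type $A_n$\dots For the general case, we need the following new idea.'' That new idea is the isomorphism $\alpha\colon {}^{\bU,\bU'}\Sigma\to{}^{\bU,F\bU'}\Sigma$ of Section~\ref{sec:isomorphism}. Setting $h_{a,v}=H_c^\ast({}^{\bU,F^a\bU}\Sigma_v)$, one shows (Proposition~\ref{prop:compare_Coxeter_cells}, via the $H_w$- and $H_v'$-stability of the pieces $Y_{v,w}$, $Z_{v,w}$) that $h_{a,v}=h_{a+1,v}$ except at one or two bad values of $a$, and then slides $a$ from $0$ to a value where $vc_a$ is close to $w_0$ and the intersection \emph{is} contained in a proper Levi (Lemma~\ref{lm:known_cells}). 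Only at that endpoint does your fixed-point picture apply and give $\widetilde h_v$.

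Two smaller points. First, for $v\notin W^F$ you still need a reason why the Levi condition holds; this is supplied by a separate Bruhat-cell argument (Proposition~\ref{prop:sigma_non_empty}): if ${}^{\bU,\bU'}\Sigma_v\neq\varnothing$ and $v\notin W^F$ then ${}^{v^{-1}}\bfU\cap\bfU'$ lies in a proper Levi, after which Lemma~\ref{lm:dim_sigma_coh_v_not_centralizing_c} gives vanishing. Second, your account of how Condition~\eqref{cond:condition_on_rootsystem} enters is off: it is not about separating depth layers. It enters in Proposition~\ref{prop:existence-of-good-coxeter} as the inequality $q>M$ needed to bound $\bigl|\sum_{i=0}^{h-1} q^i\langle\tau^i\gamma,\alpha^\ast\rangle\bigr|\geq 1$, which forces $(F-1)^{-1}\alpha^\ast$ to pair nontrivially with every root $\gamma$; this is what makes the reductive part of $H_{\bfL,v,r}$ regular on each $\bU_\gamma$ at every depth (Corollary~\ref{cor:regularity_general}).
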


Equation \eqref{eq:sigma_v_coh} follows easily from this theorem. Indeed, if $\theta_{|Z} \neq \theta_{|Z}'$ then $H_c^\ast(\Sigma_v)_{\theta \otimes \theta'} = 0$ since $Z$ acts trivially on $\Sigma_v$. On the other hand, since $ T \times^Z T' \subset (\bT \times^\bZ \bT')^F$, Theorem~\ref{thm:sigmav} implies that the cohomology of $\Sigma_v$ as a virtual $T\times^Z T$-module is the same as the cohomology of $(\dot v \bT)^F$ for which the analog of  \eqref{eq:sigma_v_coh} clearly holds.

\smallskip

The proof of Theorem~\ref{thm:sigmav} in the case where $W$ is irreducible will be given in Section~\ref{sec:proof_for_aasG}. The reduction to that case is the purpose of the remainder of this section.

\subsection{Reduction to the almost simple case}\label{ssec:reduction-to-simple}
Let $\bfG$ be an arbitrary unramified reductive group over $k$. Let $\pi \colon \widetilde \bfG \rar \bfG$ be the simply connected covering of the derived group of $\bfG$. Let $\bfZ$ denote the center of $\bfG$. Adjoint buildings of $\bfG$ and $\widetilde \bfG$ agree, and we have the parahoric $\caO_k$-model $\widetilde \caG$ of $\widetilde \bfG$, corresponding to the same point as $\caG$. Moreover, $\pi$ extends uniquely to a map $\pi \colon \widetilde \caG \rar \caG$ \cite[1.7.6]{BruhatT_84}, which in turn induces the map $\pi \colon \widetilde \bG \rar \bG$. Put $\widetilde \bfT = \pi^{-1}(\bfT)$, $\widetilde \caT = \pi^{-1}(\caT)$, $\widetilde \bT = \pi^{-1}(\bT)$, and similarly for $\bfU$, $\bfZ$, etc.

\begin{rem}\label{rem:case_of_r1}
If $r=1$, then $\widetilde\bG \rar \bG$ is the simply connected cover of the derived group of $\bG$, and the situation is precisely as in \cite[1.21-1.27]{DeligneL_76}). 
\end{rem}

The map $\pi$ induces maps on rational points $\widetilde G = \widetilde \bG(\bF_q) \rar \bG(\bF_q) = G$, and similarly $\widetilde T \rar T$. In particular, any character $\chi$ of $T$ pulls back to a character  $\widetilde \chi$ of $\widetilde T$. Now the general case of Theorem \ref{thm:higher_mackey} follows from the next proposition.

\begin{prop}\label{prop:reduction_to_sssc}
If Theorem \ref{thm:sigmav} holds for $\widetilde \bfG$, $\widetilde \bfT$ then it holds for $\bfG$, $\bfT$. 
\end{prop}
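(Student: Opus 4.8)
The plan is to compare the scheme $\Sigma = {}^{\bU,\bU'}\Sigma$ for $\bfG$ with the corresponding scheme $\widetilde\Sigma$ for $\widetilde\bfG$, via the map $\pi \colon \widetilde\bfG \rar \bfG$, and to show that $\pi$ induces an isomorphism on the cells $\Sigma_v$ up to a quotient by a finite abelian group on which everything acts trivially, exactly as in the classical case \cite[1.21--1.27]{DeligneL_76}. First I would set up notation: write $\widetilde\bG, \widetilde\bT, \widetilde\bU, \widetilde\bZ$ for the preimages under $\pi \colon \widetilde\bG \rar \bG$, and observe that since $(\bfT,\bfU)$ is a Coxeter pair for $\bfG$, its preimage $(\widetilde\bfT, \widetilde\bfU)$ is a Coxeter pair for $\widetilde\bfG$ with the same associated element $w$ of the (common) Weyl group, because the buildings and root systems coincide. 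A Coxeter pair for $\widetilde\bfG$ produced by Theorem~\ref{thm:sigmav} (applied to $\widetilde\bfG$, $\widetilde\bfT$) then transports, by Corollary~\ref{cor:replaceTU_TU} applied inside $\widetilde\bfG$, to the specific pair $(\widetilde\bfT,\widetilde\bfU)$ we want — so without loss of generality Theorem~\ref{thm:sigmav} holds for $(\widetilde\bfT,\widetilde\bfU)$ itself.

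Next I would analyze the kernel and cokernel of $\pi$ on the relevant groups. The key point is that $\pi$ restricts to an isomorphism $\widetilde\bU_\alpha \xrightarrow{\sim} \bU_\alpha$ on each root subgroup (true on the generic fiber, hence on the schematic closures in the respective parahoric models, hence after applying $L^+_r$ and passing to the Moy--Prasad quotient), so $\pi \colon \widetilde\bU \xrightarrow{\sim} \bU$ and likewise $F\widetilde\bU \xrightarrow{\sim} F\bU$. The map $\widetilde\bG \rar \bG$ is surjective with kernel a central subgroup scheme contained in $\widetilde\bZ$; call it $\bN$, a finite commutative (perfect) group scheme. Using the description $\Sigma \cong \{(x,x',y) \in F\bU \times F\bU' \times \bG \colon xF(y) = yx'\}$, the map $\pi$ induces a morphism $\widetilde\Sigma \rar \Sigma$ which is identity on the $F\bU$-coordinates and is the quotient $\widetilde\bG \tar \bG$ on the $y$-coordinate; hence $\Sigma = \bN \backslash \widetilde\Sigma$ with $\bN$ acting only through translation on $y$ (equivalently, $\widetilde\Sigma \rar \Sigma$ is an $\bN$-torsor), and this respects the cell decompositions: $\Sigma_v = \bN\backslash \widetilde\Sigma_v$ for each $v \in W$ (the Weyl groups being identified). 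Since $\bN$ is central, the $(\widetilde\bT \times^{\widetilde\bZ} \widetilde\bT')^F$-action on $\widetilde\Sigma_v$ descends, and because $\bN$ is finite, taking $\bN$-invariants on $\ell$-adic cohomology is exact: $H_c^\ast(\Sigma_v) = H_c^\ast(\widetilde\Sigma_v)^{\bN^F}$ as virtual $\cool(\bT\times^\bZ \bT')^F$-modules, after identifying $(\bT\times^\bZ\bT')^F$ with a subgroup/quotient of $(\widetilde\bT\times^{\widetilde\bZ}\widetilde\bT')^F$.

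Finally I would feed in Theorem~\ref{thm:sigmav} for $\widetilde\bfG$: for $v \notin W^F$, $H_c^\ast(\widetilde\Sigma_v) = 0$, hence $H_c^\ast(\Sigma_v) = 0$; for $v \in W^F$, $H_c^\ast(\widetilde\Sigma_v) = H_c^0((\dot v\widetilde\bT)^F, \cool)$, i.e. the permutation module on $(\dot v\widetilde\bT)^F$, and I must check that its $\bN^F$-invariants give precisely $H_c^0((\dot v\bT)^F,\cool)$ as a module for $(\bT\times^\bZ\bT')^F$. This is the arithmetic heart of the argument and the step I expect to be the main obstacle: one needs that $\pi$ induces a bijection $\bN^F\backslash(\dot v\widetilde\bT)^F \xrightarrow{\sim} (\dot v\bT)^F$, which amounts to a vanishing of a piece of $H^1(F, \bN)$ along the Lang-type fibers — here $\bN$ need not be connected, so Lang's theorem does not apply directly, but $\bN$ is a finite abelian group scheme and $(\dot v\bfT, \dot v)$ is elliptic, so $H^1\big(F,\bN(\dot v\bT)\big)$ can be controlled exactly as in \cite[1.23--1.27]{DeligneL_76}; the compatibility of the torus actions then follows formally since $\widetilde\bT \tar \bT$ and $\widetilde\bZ \tar \bZ$ and the Coxeter element $\dot v$ is the same on both sides. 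Assembling these, \eqref{eq:vTF_and_Sigma_v} for $\bfG$ follows, proving the proposition.
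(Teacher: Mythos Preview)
Your approach has a genuine gap: you assert that $\pi \colon \widetilde\bG \rar \bG$ is surjective with finite central kernel $\bN$, and deduce that $\Sigma = \bN\backslash\widetilde\Sigma$. But $\pi \colon \widetilde\bfG \rar \bfG$ is the simply connected cover of the \emph{derived group} of $\bfG$, so $\pi$ is not surjective unless $\bfG$ is semisimple (e.g.\ for $\bfG = \mathbf{GL}_n$ one has $\widetilde\bfG = \mathbf{SL}_n$). The image of $\widetilde\bG$ in $\bG$ misses the central torus direction entirely, so $\widetilde\Sigma$ maps to a strictly smaller subscheme of $\Sigma$ (the $y$-coordinate of $\Sigma$ ranges over all of $\bG$), and no quotient by a finite group can recover the difference. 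The same issue reappears at the end: $(\dot v\bT)^F$ is in general much larger than any quotient of $(\dot v\widetilde\bT)^F$, so your proposed bijection $\bN^F\backslash(\dot v\widetilde\bT)^F \rar (\dot v\bT)^F$ cannot exist.

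The paper handles this by the \emph{induced space} formalism of \cite[1.24]{DeligneL_76}. Writing $\bS = \bT\times\bT$ and $\gamma \colon (\widetilde\bS/\widetilde\bZ)^F \rar (\bS/\bZ)^F$, one proves $\Sigma = \Ind_\gamma \widetilde\Sigma$ (Lemma~\ref{lm:induced_Sigma_space}) and similarly $(\dot v\bT)^F = \Ind_\gamma (\dot v\widetilde\bT)^F$ as $(\bS/\bZ)^F$-spaces. The functor $\Ind_\gamma$ accounts simultaneously for the kernel of $\gamma$ (your $\bN$-quotient idea) \emph{and} for its cokernel (a disjoint union of translates, which is exactly what compensates for the missing central torus). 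Since both $\Sigma_v$ and $(\dot v\bT)^F$ arise from their tilde versions via the same functor $\Ind_\gamma$, Theorem~\ref{thm:sigmav} for $\widetilde\bfG$ transfers to $\bfG$ immediately on the level of virtual $(\bS/\bZ)^F$-modules. Your torsor/invariants picture is essentially the special case where $\gamma$ is injective, which is precisely what fails for non-semisimple $\bfG$.
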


\begin{proof}
Let $\bS = \bT \times \bT$ and $S = T \times T$ (resp. $\widetilde \bS = \pi^{-1}(\bS)$ and $\widetilde S = \widetilde \bS(\bF_q)$). We have the space $X$ and $\widetilde X$ carrying actions of $G \times T$ and $\widetilde G \times \widetilde T$ respectively. Moreover, we also have the quotients $\Sigma = (X\times X)/G$ and $\widetilde \Sigma = (\widetilde X \times \widetilde X)/\widetilde G$ acted on by $S$ and $\widetilde S$ respectively. The $G\times T$-action on $X$ factors through the action of the quotient $G \times^Z T = G\times T/\{(z,z^{-1}) \colon z \in Z\}$, which in turn extends to an action of the bigger group $(\bG \times^\bZ \bT)^F$ given by the same formula. Similarly, the $S$-action on $\Sigma$ factors through an action of $S/Z$ ($Z$ embedded diagonally), which extends to an action of $(\bS/\bZ)^F$ given by same formula. These two extensions of actions also hold when we put a $\widetilde{(\cdot)}$ over each of the objects.

Recall the notion of the \emph{induced space} from \cite[1.24]{DeligneL_76}: if $\alpha \colon A \rar B$ is a homomorphism of finite groups, and $Y$ a space on which $A$ acts, then the induced space ${\rm Ind}_A^B Y = {\rm Ind}_\alpha Y$ is the (unique up to unique isomorphism) $B$-space $I$, provided with an $A$-equivariant map $Y \rar I$, which satisfies $\Hom_B(I,V) = \Hom_A(Y,V)$ for any $B$-space $V$. 

\begin{lm}\label{lm:induced_Sigma_space}
Let $\gamma \colon (\widetilde \bS /\widetilde \bZ)^F \rar (\bS /\bZ)^F$ be the natural map induced by $\pi$. Then $\Sigma = \Ind_\gamma \widetilde \Sigma$.
\end{lm}
\begin{proof}
We have the natural map $\alpha \colon (\widetilde \bS \times^{\widetilde\bZ} \widetilde \bG)^F \rar (\bS \times^\bZ \bG)^F$. Kernel and cokernel of $\alpha$ are canonically isomorphic to the kernel and cokernel of $\beta \colon \widetilde \bS^F \rar \bS^F$ (same argument as \cite[1.26]{DeligneL_76} with $\bS$ instead of $\bT$). One checks that $X = \Ind_{\widetilde \bS^F}^{\bS^F} \widetilde X$. Thus, similar as in \cite[1.25]{DeligneL_76}, 
\begin{equation}\label{eq:induction_XX_XX}
X \times X = \Ind\nolimits_\beta \widetilde X \times \widetilde X = \Ind\nolimits_\alpha \widetilde X \times \widetilde X.
\end{equation}
Now, we have the commutative diagram with exact rows:
\begin{equation}\label{eq:SCG_relation}
\xymatrix{
1 \ar[r] & \widetilde \bG^F \ar[r] \ar[d] & (\widetilde \bS \times^{\widetilde \bZ} \widetilde \bG)^F \ar[r] \ar[d]^{\alpha} & (\widetilde \bS/\widetilde \bZ)^F \ar[r] \ar[d]^\gamma & 1 \\
1 \ar[r] & \bG^F \ar[r] & (\bS \times^{\bZ} \bG)^F \ar[r] & (\bS/\bZ)^F \ar[r] & 1 
}
\end{equation}
which is obtained from the same diagram for the algebraic groups (with all $F$'s removed) by taking Galois cohomology and using Lang's theorem and connectedness of $\bG$, $\widetilde\bG$. Now the lemma formally follows from \eqref{eq:SCG_relation} and \eqref{eq:induction_XX_XX}, using that $\Sigma = (X\times X)/\bG^F$ and $\widetilde \Sigma =( \widetilde X \times \widetilde X) / \widetilde \bG^F$.
\end{proof}
 
On the other hand, if $v \in W$ then we also have  $(\dot v \bT)^F = \Ind_\gamma  (\dot v \widetilde\bT)^F$ as $(\bS/\bZ)^F$-varieties.   Therefore Proposition~\ref{prop:reduction_to_sssc} follows from Lemma~\ref{lm:induced_Sigma_space}.\end{proof}

Assume now that $\bfG$  is semisimple and simply connected. In this case, there is some $s\geq 1$ such that $\bfG \cong \prod_{i=1}^s \bfG_i$, where each $\bfG_i$ is an almost simple and simply connected unramified reductive $k$-group. We have then similar product decompositions for the Bruhat--Tits buildings, the parahoric models $\caG \cong \prod_i \caG_i$, their Moy--Prasad filtrations, the maximal tori, their Weyl groups, the unipotent radicals of the Borels, etc. Upon applying the functor $\caG \mapsto L^+\caG/L^+\caG^{(r-1)+}$, this induces an isomorphism $X_{\bfT,\bfU} \cong \prod_i X_{\bfT_i,\bfU_i}$, and finally an isomorphism $\Sigma \cong \prod_i \Sigma_i$ (with obvious notation), equivariant for the action of $T \times T = \prod_i (T_i \times T_i)$. Applying the K\"unneth formula shows that Theorem~\ref{thm:sigmav} holds for $\bfG,\bfT$ whenever it holds for all $\bfG_i$, $\bfT_i$. 

\smallskip

Finally if $\bfG$ is almost simple and simply connected, then there is some $m \geq 1$, and an absolutely almost simple group $\widehat\bfG$ over $k_m$, the degree $m$ subextension of $\breve k/k$, such that $\bfG \cong \Res_{k_m/k} \widehat\bfG$ is the restriction of scalars of $\widehat\bfG$. We thus may assume that $\bfG = \Res_{k_m/k} \widehat\bfG$.
The Bruhat--Tits buildings $\cB(\bfG,k)$ and $\cB(\widehat\bfG,k_m)$ are canonically isomorphic. Let ${\bf x}$ be a vertex of $\cB(\bfG,k)$ with attached parahoric $\caO_k$-model $\caG$ of $\bfG$, and let ${\bf x}$ also denote the corresponding vertex of $\cB(\widehat\bfG,k_m)$, with attached parahoric $\caO_{k_m}$-model $\widehat\caG$ of $\widehat\bfG$. Then there is a canonical isomorphism $\caG = \Res_{\caO_{k_m}/\caO_k} \widehat\caG$ inducing the identity on generic fibers \cite[Prop. 4.7]{HainesR_20}. Reducing modulo $\varpi^r$ and applying \cite[Cor. 13.5]{BertapelleG_18} (with $e=1$), we deduce a canonical identification $\bG = \Res_{\bF_{q^m}/\bF_q} \widehat\bG$, where $\bG,\widehat\bG$ are attached to $\caG,\widehat\caG$ as in Section \ref{sec:subschemes_of_bG}.

We have $\bfT = \Res_{k_m/k} \widehat\bfT$ for a Coxeter torus of $\widehat\bfG / k_m$, and we may identify $W = \prod_{i=1}^m \widehat W$, where $\widehat W$ is the Weyl group of $\widehat\bfT$. Furthermore under this identification one can assume that $F$ acts by $F((w_i)_{i=1}^m) = (\widehat F(w_m),w_1, \dots,w_{m-1})$, where $\widehat F$ is the Frobenius of $\widehat W$. In particular $W^F = \{(w_1,\dots,w_1) \colon \widehat F(w_1) = w_1\} \simeq (\widehat W)^{\widehat F}$. Choose $\bfU$ such that $F(\bfU) = {}^c\bfU$, where $c = (\widehat c,1,\dots,1) \in W$ and $\widehat c \in \widehat W$ is the twisted Coxeter element of $\widehat W$ satisfying $\widehat F(\widehat \bfU) = {}^{\widehat c} \widehat\bfU$. Then $(\bfT,\bfU)$ is a Coxeter pair. Now, if we consider the decomposition $\bG_{\obF} \cong \prod_{i=1}^m \widehat\bG_{\obF}$ the equation $ xF(y) = yx' $
for $(x, x',y) \in  \bU \times \bU \times \bG$ can be written 
$$ (x_1,\dots,x_m)(\widehat F(y_m),y_1,\dots, y_m) = (y_1,\dots,y_m)(x_1',\dots, x_m'),$$
which in turn is equivalent to
$$\widehat x \widehat F(y_1) = y_1\widehat x' \quad \text{and} \quad \forall i \in \{2,\ldots,m\} \ y_i = x_i y_{i-1} (x_i')^{-1}$$
where  $\widehat x := x_1 \widehat F(x_m x_{m-1} \dots x_1)$ and $\widehat x' := x_1'F(x_m'x_{m-1}'\dots x_1')$. Therefore we can remove all the $y_i$'s for $i \geq 2$ to show that 
$$\Sigma = \{(x_i),(x'_i),y_1 \in \bU \times \bU \times \widehat\bG \colon \widehat x \widehat F(y_1) = y_1\widehat x' \}.$$
This scheme lies over the scheme $\widehat\Sigma = \{\widehat x,\widehat x',y_1 \in \widehat\bU \times \widehat\bU \times \widehat\bG \colon \widehat x \widehat F(y) = y \widehat x'\}$ attached to $\widehat\bG$, via the natural map $(x_i),(x_i'),y_1 \mapsto (\widehat x, \widehat x',y_1)$. All fibers of this map are isomorphic to the perfection of a fixed affine space of some dimension, so that $H_c^\ast(\Sigma) = H_c^\ast(\widehat\Sigma)$. 
This shows that Theorem~\ref{thm:sigmav} holds for $\bfG$ whenever it holds for $\widehat\bfG$

\smallskip

Summarising the results obtained in this section, we have that Theorem~\ref{thm:sigmav} holds whenever it holds for any absolutely almost simple group. In particular we shall, and we will, only consider the case where $W$ is irreducible in Sections \ref{sec:regularity} and \ref{sec:proof_for_aasG}.

\section{Extensions of action}\label{sec:extension_of_action}

Throughout this section we work in the general setup of Section \ref{sec:groups_setup}. We fix two \emph{arbitrary} pairs $(\bfT,\bfU)$, $(\bfT',\bfU')$ with ${\bf x} \in \caA(\bfT,k) \cap \caA(\bfT',k)$. Then we have the corresponding subgroups $\bU,\bU' \subseteq \bG$ and for $v \in W(\bT_1,\bT_1')$, the scheme $\Sigma_v = {}^{\bU,\bU'}\Sigma_v$ as in \eqref{eq:Sigma_v_def}.
Pushing further the ideas from \cite[(6.6.2)]{DeligneL_76} and \cite[\S3.3 and \S3.4]{CI_loopGLn}, we will extend the action of the finite group $T \times T'$ on $\Sigma_v$ to the actions of various bigger groups.

\subsection{Lusztig's extension}\label{sec:Lusztigs_extension}
First we have the extension of action due to Lusztig (and a minimal variation of it). The geometric points $\bG^1(\obF)$ of the group $\bG^1 = \ker(\bG \rar \bG_1)$ can be written as a product of all ``root subgroups'' $\bU_{\alpha}(\obF)$, $\alpha \in \Phi(\bfT,\bfG)$ contained in it, and these can be taken in any order \cite[(6.4.48)]{BruhatT_72}, so we have 
\begin{align*} \pr^{-1}(\bU_1 \dot v \bT_1' \bU_1') &= \bU\dot v \bG^1 \bT' \bU' \\ 
&= \bU \dot v \left[ ({}^{v^{-1}}\bU)^1 ({}^{v^{-1}}\bU^- \cap \bU^{\prime -})^1 \bU^{\prime 1} \bT^{\prime 1}\right]\bT' \bU' \\
&= \bU\dot v \bT^{\prime}\bK^1 \bU',
\end{align*}
where we put 
\[ \bK := \bK_{\bU,\bU',v} := {}^{v^{-1}}\bU^- \cap \bU^{\prime -} \]
Then $\Sigma_v$ can be rewritten as 
\[ \Sigma_v = \{(x,x',y) \in \Sigma \colon y \in \bU\dot v \bT^{\prime}\bK^1 \bU'\}. \]
Consider
\begin{equation}\label{eq:first_presentation_widehat_Sigma}
\widehat\Sigma_v = \{x,x',y',\tau,z,y'' \in F\bU \times F\bU' \times \bU \times \bT' \times \bK^1 \times \bU' \colon xF(y'\dot v\tau z y'') = y'\dot v\tau z y''x' \},
\end{equation}
with an action of $T\times T'$ given by
\[ (t,t') \colon (x,x',y',\tau,z,y'') \mapsto (txt^{-1},t'x't^{\prime -1},ty't^{-1},\dot v^{-1}t\dot v\tau t',t'zt^{\prime -1},t'y''t^{\prime -1}). 
\]
Then we have an obvious $T\times T'$-equivariant map $\widehat\Sigma_v \rar \Sigma_v$, $(x,x',y',\tau,z,y'') \mapsto (x,x',y'\dot v \tau z y'')$, which is a Zariski-locally trivial fibration with fibers isomorphic to the perfection of a fixed affine space. Then the $\ell$-adic Euler characteristic does not change, so that we have an equality of virtual $T \times T'$-modules
\begin{equation}\label{eq:coh_Sigma_Sigma_hat_same}
H_c^\ast(\Sigma_v) = H_c^\ast(\widehat\Sigma_v).
\end{equation}

Now make the change of variables $xF(y') \mapsto x$, $x'F(y'')^{-1} \mapsto x'$, so that 
\begin{equation}\label{eq:widehat_Sigma_after_change_var}
\widehat\Sigma_v \cong \{x,x',y',\tau,z,y'' \in F\bU \times F\bU' \times \bU \times \bT' \times \bK^1 \times \bU' \colon xF(\dot v \tau z) = y'\dot v \tau z y'' x'\},
\end{equation}

\begin{lm}[\cite{Lusztig_04}, 1.9]\label{lm:Lusztigs_extension_of_action} (i) This $T \times T'$-action on $\widehat\Sigma_v$ extends to an action of the closed subgroup 
\[
H_v = \{(t,t') \in \bT \times \bT' \colon \dot v^{-1} t^{-1}F(t)\dot v = t^{\prime -1}F(t') \text{ centralizes $\bK = ({}^{v^{-1}}\bU \cap \bU')^-$} \}
\]
of $\bT \times \bT'$, given by
\[
(t,t') \colon (x,x',y',\tau,z,y'') \mapsto (F(t)xF(t)^{-1}, F(t')x'F(t')^{-1}, F(t)y'F(t)^{-1},\dot v^{-1}t\dot v\tau t^{\prime -1},t'zt^{\prime -1},F(t')y''F(t')^{-1}).
\]
(ii) Similarly, the $T\times T'$-action on $\widehat\Sigma_v$ extends to an action of the closed subgroup
\[
H_v' = \{(t,t') \in \bT \times \bT' \colon F(\dot v)^{-1} tF(t)^{-1}F(\dot v) = t'F(t')^{-1} \text{ centralizes $F(\bK) = F({}^{v^{-1}}\bU \cap \bU')^-$} \}
\]
of $\bT \times \bT'$, given by
\[
(t,t') \colon (x,x',y',\tau,z,y'') \mapsto (txt^{-1}, t'x't^{\prime -1}, ty't^{-1},\dot v^{-1}t\dot v\tau t^{\prime -1},t'zt^{\prime -1},t'y''t^{\prime -1}).
\]
\end{lm}
\begin{proof}
(ii) is proven in \cite[1.9]{Lusztig_04}. The proof of part (i) is completely analogous.
\end{proof}

\subsection{Another extension of action}\label{sec:another_extension_of_action} To extend the action differently, we replace the resolution $\widehat\Sigma_v \rar \Sigma_v$ by a different one. For that purpose note that the ($\obF$-points of the) closed subgroup 
\begin{equation}\label{eq:some_group_on_rhs}
({}^{v^{-1}}\bU^- \cap \bU^{\prime -})^1 \cdot ({}^{v^{-1}}\bU^- \cap \bU')
\end{equation}
of ${}^{v^{-1}}\bU^-$ can be described as being cut out by a certain concave function on $\Phi(\bfT,\bfG)$. More precisely, it is equal to the quotient of $U_f$ (in the sense of Bruhat--Tits \cite[\S6.2]{BruhatT_72}) with $f \colon \Phi \cup \{0\} \rar \widetilde\bR$ being the function 
\[
f(\alpha) = \begin{cases} \infty & \text{if $\alpha = 0$ or $\alpha \in \Phi(\bfT, {}^{v^{-1}}\bfU)$} \\ 0 & \text{if $\alpha \in \Phi(\bfT,{}^{v^{-1}} \bfU \cap \bfU')$}\\ 1 & \text{if $\alpha \in \Phi(\bfT, {}^{v^{-1}} \bfU \cap \bfU^{\prime -})$}\\ \end{cases}
\]
by the normal subgroup $\ker(L^+\caG \rar \bG_r)(\obF)$ (which is itself of the form $U_{f'}$ for a further concave function $f'$). By \cite[6.4.48]{BruhatT_72}, the order of the roots in the product expression appearing in \eqref{eq:some_group_on_rhs} can be chosen arbitrary, thus the group \eqref{eq:some_group_on_rhs} is also equal to
\begin{align}
\label{eq:new_expression_of_intermed_product_groups} &({}^{v^{-1}}\bU^- \cap \bU^{\prime -} \cap F\bU^{\prime -})^1 \cdot ({}^{v^{-1}}\bU^- \cap \bU^{\prime} \cap F\bU^{\prime -}) \cdot ({}^{v^{-1}}\bU^- \cap \bU^{\prime -} \cap F\bU^{\prime})^1 \cdot ({}^{v^{-1}}\bU^- \cap \bU^{\prime} \cap F\bU^{\prime})  \\
\nonumber&= ({}^{v^{-1}}\bU^- \cap F\bU^{\prime -})' \cdot ({}^{v^{-1}}\bU^- \cap F\bU^{\prime})',
\end{align}
where $({}^{v^{-1}} \bU^- \cap F\bU^{\prime -})'$ denotes the closed subgroup of ${}^{v^{-1}} \bU^- \cap F\bU^{\prime -}$ determined by the appropriate concave function on roots (and similarly for $({}^{v^{-1}}\bU^- \cap F\bU^{\prime})'$). Then on $\obF$-points we have
\begin{align*}
\pr^{-1}(\bU_1\dot v \bT_1' \bU'_1) &= \pr^{-1}(\bU_1\dot v \bT_1' ({}^{v^{-1}}\bU_1^- \cap \bU'_1))  \\
&= \bU\dot v \bT' \bG^1 ({}^{v^{-1}}\bU^- \cap \bU'_1) \\
&= \bU\dot v\bT'({}^{v^{-1}} \bU^- \cap \bU^{\prime -})^1  ({}^{v^{-1}} \bU^- \cap \bU^{\prime}),
\end{align*}
using that $\bG^1(\obF)$ decomposes into the product of ``root subgroups'' $\bU_\alpha(\obF)$ contained in it, taken in any order. Using this and the expression \eqref{eq:new_expression_of_intermed_product_groups} of the group \eqref{eq:some_group_on_rhs}, we can rewrite
\[ \Sigma_v = \{x,x',y \in F\bU \times F\bU' \times \bU\dot v \bT' ({}^{v^{-1}}\bU^- \cap F\bU^{\prime -})' ({}^{v^{-1}} \bU^- \cap F\bU^{\prime})' \colon xF(y) = yx' \}. \]
Now consider 
\[
\widetilde\Sigma_v := \{x,x',y',\tau,z_1,y_1'' \in F\bU \times F\bU' \times \bU \times \bT' \times ({}^{v^{-1}}\bU^- \cap F\bU^{\prime -})' \times ({}^{v^{-1}} \bU^- \cap F\bU^{\prime})' \colon xF(y'\dot v \tau z_1y_1'') = y'\dot v \tau z_1y_1''x' \},
\]
with $T\times T'$-action given by 
\[(t,t') \colon (x,x',y',\tau,z_1,y_1'') \mapsto (txt^{-1},t'x't^{\prime -1}, ty't^{-1},\dot v^{-1}t\dot v \tau t', t'z_1t^{\prime -1}, t'y_1''t^{\prime -1})\]
Then the map $\widetilde\Sigma_v \rar \Sigma_v$, $(x,x',y',\tau,z_1,y_1'') \mapsto (x,x',y'\dot v \tau z_1 y_1'')$ is a $T\times T'$-equivariant Zariski-locally trivial fibration, with fibers isomorphic to the perfection of some fixed affine space. In particular, we again have an equality of virtual  $T \times T'$-modules
\begin{equation}\label{eq:coh_Sigma_Sigma_tilde_same}
H_c^\ast(\Sigma_v) = H_c^\ast(\widetilde\Sigma_v) 
\end{equation}

Now we make the change of variables $xF(y') \mapsto x$, $y_1'' x' \mapsto x'$, so that
\[
\widetilde\Sigma_v := \{x,x',y',\tau,z_1,y_1'' \in F\bU \times F\bU' \times \bU \times \bT' \times ({}^{v^{-1}}\bU^- \cap F\bU^{\prime -})' \times ({}^{v^{-1}} \bU^- \cap F\bU^{\prime})' \colon xF(\dot v \tau z_1y_1'') = y'\dot v \tau z_1 x' \},
\]
with the action of $T\times T'$ given by the same formula as before.
\begin{lm}\label{lm:new_simple_extensions}
\begin{itemize}
\item[(i)] The action of $T\times T'$ on $\widetilde\Sigma_v$ extends to an action of the closed subgroup
\[
H_v'' = \{(t,t') \in \bT \times \bT' \colon \dot v^{-1}F^{-1}(t)^{-1}t\dot v = t^{\prime -1}F^{-1}(t') \text{ centralizes } {}^{v^{-1}}\bU^- \cap F\bU^{\prime -}\},
\]
of $\bT\times \bT'$ given by
\[(t,t') \colon (x,x',y',\tau,z_1,y_1'') \mapsto (txt^{-1},t'x't^{\prime -1}, ty't^{-1},\dot v^{-1}t\dot v \tau t^{\prime -1}, t'z_1t^{\prime -1}, F(t')y_1''F(t^{\prime})^{-1})\]

\item[(ii)] The action of $T\times T'$ on $\widetilde\Sigma_v$ extends to an action of the closed subgroup
\[
H_v''' = \{(t,t') \in \bT \times \bT' \colon \dot v^{-1}F^{-1}(t)^{-1}t\dot v = t^{\prime -1}F^{-1}(t') \text{ centralizes } {}^{v^{-1}}F\bU^- \cap \bU^{\prime -}\},
\]
of $\bT\times \bT'$ given by
\[
(t,t') \colon (x,x',y',\tau,z_1,y_1'') \mapsto (txt^{-1},t'x't^{\prime -1}, ty't^{-1},\dot v^{-1}t\dot v \tau t^{\prime -1}, t'z_1t^{\prime -1}, F^{-1}(t')y_1''F^{-1}(t^{\prime})^{-1})
\]
\end{itemize}
\end{lm}
\begin{proof} The proof is a computation similar to Lemma \ref{lm:Lusztigs_extension_of_action}.
\end{proof}

\subsection{An isomorphism}\label{sec:isomorphism}
The extensions of actions from Sections \ref{sec:Lusztigs_extension} and \ref{sec:another_extension_of_action} suffice to prove Theorem \ref{thm:higher_mackey} in type $A_n$, as was done in \cite[Thm. 3.1]{CI_loopGLn}. The proof was however based on a particular combinatorial property of this type. For the general case, we need the following new idea. One immediately checks that 
\begin{align*}
\alpha = {}^{\bU,\bU'}\alpha \colon {}^{\bU,\bU'}\Sigma &\rar {}^{\bU,F\bU'}\Sigma \\
(x,x',y) &\mapsto (x,F(x'),yx')
\end{align*}
is an $T \times T'$-equivariant isomorphism. In general, it does not preserve the locally closed pieces $\Sigma_v$. However, we have the following lemma.

\begin{lm}\label{lm:alpha_image_stable_under_Lusztigs_extension}
For $v,w \in W(\bT_1,\bT_1')$, let $Y_{v,w}\subseteq {}^{\bU,F\bU'}\widehat\Sigma_w$ be defined by the Cartesian diagram
\[
\xymatrix{
Y_{v,w} \ar[r] \ar[d] & {}^{\bU,F\bU'}\widehat\Sigma_w \ar[d] \\
\alpha({}^{\bU,\bU'}\Sigma_v) \cap {}^{\bU,F\bU'}\Sigma_w \ar[r] & {}^{\bU,F\bU'}\Sigma_w
}
\]
where the left lower entry is the scheme-theoretic intersection inside ${}^{\bU,F\bU'}\Sigma_w$. Then $Y_{v,w}$ is stable under the action of $H_w$ on ${}^{\bU,F\bU'}\widehat\Sigma_w$ defined in Lemma \ref{lm:Lusztigs_extension_of_action}(i).
\end{lm}
\begin{proof} In terms of the presentation \eqref{eq:first_presentation_widehat_Sigma} of ${}^{\bU,F\bU'}\widehat\Sigma_w$ where we denote the coordinates by $x_1,x_1',y_1',\tau_1,z_1,y_1''$, consider the morphism 
\[
y_1'\dot w \tau_1 z_1 y_1'' F^{-1}(x_1')^{-1} \colon \widehat\Sigma_w \rar \bG.
\]
The subscheme $Y_{v,w}$ is the preimage under this morphism of $\pr^{-1}(\bU_1 \dot v \bT_1'\bU_1')$. Now we apply the change of coordinates ($x_1F(y_1) \mapsto x_1$, $x_1'F(y_1'')^{-1} \mapsto x_1'$) from \eqref{eq:first_presentation_widehat_Sigma} to \eqref{eq:widehat_Sigma_after_change_var}. Then the expression $F^{-1}(x_1')^{-1}$ in the old coordinates gets $y_1^{\prime\prime -1}F^{-1}(x_1')^{-1}$ in the new coordinates. Thus in the new coordinates, $Y_{v,w} \subseteq {}^{\bU,F\bU'}\widehat\Sigma_w$ is the preimage under 
\[
y := y_1'\dot w \tau_1 z_1 F^{-1}(x_1')^{-1} \colon \widehat\Sigma_w \rar \bG
\]
of $\pr^{-1}(\bU_1 \dot v \bT_1'\bU_1') \subseteq \bG$. We have to show that for any $\obF$-algebra $R$ and any $(t,t') \in H_w(R)$, the map $(t,t') \colon Y_{v,w,R} \rar {}^{\bU,F\bU'}\widehat\Sigma_{w,R}$ factors through $Y_{v,w,R} \subseteq {}^{\bU,F\bU'}\widehat\Sigma_{w,R}$, that is $y \circ (t,t') \colon Y_{v,w,R} \rar \bG_R$ factors through the locally closed subset $\pr^{-1}(\bU_1 \dot v \bT_1'\bU_1')_R \subseteq \bG_R$. It suffices to do so on points. Let $(X_1,X_1',Y_1',T_1,Z_1,Y_1'') \in Y_{v,w}(R')$ for some $R$-algebra $R'$. Then 

\begin{align*}
y\circ(t,t')(X_1,X_1',Y_1',T_1,Z_1,Y_1'') &= y(F(t)X_1F(t)^{-1}, F(t')X_1'F(t')^{-1}, F(t)Y_1'F(t)^{-1},\dot w^{-1}t\dot w T_1 t^{\prime -1},t'Z_1t^{\prime -1},F(t')Y_1''F(t')^{-1}) \\
&= F(t)Y_1'F(t)^{-1} t\dot wT_1 Z_1 F^{-1}(X_1')^{-1} t^{\prime -1} \\
&= \underbrace{F(t)Y_1'F(t)^{-1}}_{\in \bU(R')} \cdot \underbrace{t}_{\in \bT(R')}  \cdot \underbrace{Y_1^{\prime -1}}_{\in \bU(R')} \cdot \underbrace{Y_1' \dot w T_1 Z_1 F^{-1}(X_1')^{-1}}_{\substack{\in \pr^{-1}(\bU_1\dot v \bT_1' \bU_1')(R') \\ \text{by assumption}}} \cdot \underbrace{t^{\prime -1}}_{\in \bT'(R')}.
\end{align*}
The last expression clearly lies in $\pr^{-1}(\bU_1\dot v \bT_1' \bU_1')(R')$ and we are done.
\end{proof}

Let us also look at the converse situation. The inverse of $\alpha$ is given by $(x_1,x_1',y_1) \mapsto (x_1, F^{-1}(x),y_1F^{-1}(x_1')^{-1})$. 

\begin{lm}\label{lm:alpha_inverse_image_stability_Lusztig}
For $v,w\in W(\bT_1,\bT_1')$ let $Z_{v,w} \subseteq {}^{\bU,\bU'}\widehat\Sigma_v$ be defined by the Cartesian diagram
\[
\xymatrix{
Z_{v,w} \ar[r] \ar[d] & {}^{\bU,\bU'}\widehat\Sigma_v \ar[d] \\
{}^{\bU,\bU'}\Sigma_v \cap \alpha^{-1}({}^{\bU,F\bU'}\Sigma_w) \ar[r] & {}^{\bU,\bU'}\Sigma_v
}
\]
where the left lower entry is the scheme-theoretic intersection inside ${}^{\bU,F\bU'}\Sigma_w$. Then $Z_{v,w}$ is stable under the action of $H_v'$ on ${}^{\bU,\bU'}\widehat\Sigma_v$ defined in Lemma \ref{lm:Lusztigs_extension_of_action}(ii).
\end{lm}
\begin{proof}
In terms of the presentation \eqref{eq:first_presentation_widehat_Sigma} of ${}^{\bU,\bU'}\widehat\Sigma_v$ where we denote the coordinates by $x,x',y',\tau,z,y''$, consider the morphism 
\[
y'\dot v \tau z y'' x' \colon \widehat\Sigma_v \rar \bG.
\]
Then $Z_{v,w}$ is the preimage under this morphism of $\pr^{-1}(\bU_1 \dot w \bT_1' F\bU_1')$. Now we make the change  of coordinates ($xF(y) \mapsto x$, $x'F(y'')^{-1} \mapsto x'$) from \eqref{eq:first_presentation_widehat_Sigma} to \eqref{eq:widehat_Sigma_after_change_var}. Then the expression $x'$ in the old coordinates becomes $x'F(y'')$ in the new coordinates. Hence in the new coordinates $Z_{v,w}$ is the preimage of $\pr^{-1}(\bU_1 \dot w \bT_1' F\bU_1')$ under 
\[
y_1 := y' \dot v \tau z y'' x'F(y'') \colon \widehat\Sigma_v \rar \bG.
\]
Let $R$ be an $\obF$-algebra and $(t,t') \in H_v'(R)$. As in the proof of Lemma \ref{lm:alpha_image_stable_under_Lusztigs_extension}, we have to show that $y_1 \circ (t,t') \colon Z_{v,w,R} \rar \bG_R$ factors through $\pr^{-1}(\bU_1 \dot w \bT_1' F\bU_1')_R \subseteq \bG_R$. Let $(X,X',Y',T,Z,Y'') \in Z_{v,w}(R')$ for some $R$-algebra $R'$. Then 
\begin{align*}
y_1 \circ (t,t')(X,X',Y',T,Z,Y'') &= y_1(tXt^{-1}, t'X't^{\prime -1}, tY't^{-1}, v^{-1}tvTt^{\prime -1}, t^{\prime} Z t^{\prime -1}, t' Y'' t^{\prime -1}) \\
&= t Y' \dot v T z Y'' X' t^{\prime -1}F(t') F(Y'') F(t')^{-1} \\
&= \underbrace{t}_{\in \bT(R')} \cdot \underbrace{Y' \dot v T z Y'' X' F(Y'')}_{\substack{\in \pr^{-1}(\bU_1 \dot w \bT_1' F\bU_1')(R') \\ \text{by assumption}}} \cdot \underbrace{F(Y'')^{-1}}_{\in F\bU'(R')} \cdot \underbrace{t^{\prime -1}}_{\in \bT'(R')} \cdot \underbrace{F(t') F(Y'') F(t')^{-1}}_{\in F\bU'(R')}
\end{align*}
The last expression lies in $\pr^{-1}(\bU_1 \dot w \bT_1' F\bU_1')(R')$ and we are done.
\end{proof}

\begin{rem}
There seem to be no analogs of these lemmas for $\widetilde\Sigma_v$ (from Section \ref{sec:another_extension_of_action}) instead of $\widehat\Sigma_v$.  
\end{rem}

\section{Regularity of certain subgroups}\label{sec:regularity}

The purpose of this section is to show that the groups $H_v,H_v', \dots $ produced in Section \ref{sec:extension_of_action} contain $\obF$-reductive subgroups under which the varieties $\widehat\Sigma_v$ and $\widetilde\Sigma_v$ have finitely many fixed points. This will be the key for computing their cohomology, as given in Theorem~\ref{thm:sigmav}. Note that this strategy was already used in  \cite[1.9(e)]{Lusztig_04}, but with much bigger versions of $H_v$.

\smallskip

Throughout this section we work in  the setup of Theorem \ref{thm:higher_mackey}. In particular, $\bfG$ is unramified, $\bf x$ is hyperspecial, and $(\bfT,\bfU)$, $(\bfT',\bfU')$ are Coxeter pairs with ${\bf x} = {\bf x}_{\bfT} = {\bf x}_{\bfT'}$. Thanks to the reduction results in Section~\ref{ssec:reduction-to-simple} we  will assume in addition that $\bfG$ is absolutely almost simple over $k$, i.e., that the Dynkin diagram of the split group $\bfG_{\breve k}$ is connected.

\subsection{Pull-back of a cocharacter under the Lang map}\label{sec:regularity_preparations}
We may identify the groups of cocharacters $X_\ast(\bT_1)$, $X_\ast(\bfT)$, and similarly for characters. The Frobenius $F$ acts on $X_\ast(\bT_1)$, $X^\ast(\bT_1)$ and these actions induce $\bQ$-linear automorphisms of the $\bQ$-vector spaces $X_\ast(\bT_1)_\bQ$, $X^\ast(\bT_1)_\bQ$. Let $\bfG^{\ad}$ be the adjoint quotient of $\bfG$ and $\bfT^{\ad}$ the image of $\bfT$ in $\bfG^{\rm ad}$, such that $X_{\ast}(\bfT^{\ad})$ is a quotient of $X_\ast(\bfT)$, and $X^\ast(\bfT^{\ad}) \subseteq X^\ast(\bfT)$.  

\smallskip

For $\chi \in X_\ast(\bT_1)$, we are interested in (the connected component of the) subgroup
\begin{equation}\label{eq:Lang_pullback_of_character}
H_\chi = \{ t \in \bT_1 \colon t^{-1}F(t) \in \im(\chi \colon \bG_m \rar \bT_1) \} \subseteq \bT_1.
\end{equation}

\begin{lm}\label{lm:connected_component}
Let $\chi \in X_\ast(\bT_1)$. 
There exists $0 \neq \mu \in X_\ast(\bT_1)$ such that  $F\mu - \mu \in \bQ\cdot\chi$. Such $\mu$ is
unique up to a scalar and we have $H_\chi^\circ = \im(\mu)$.
\end{lm}
\begin{proof}
By \cite[Prop. 13.7]{DigneM_91}, the map $F-1\colon X_\ast(\bT_1) \rar X_\ast(\bT_1)$ is injective and has finite cokernel. 
Therefore there exists $0 \neq \mu \in X_\ast(\bT_1)$, unique up to a scalar, such that $F\mu - \mu \in \bQ\cdot\chi$. This implies that $\im(\mu)$ is a one-dimensional subtorus of $\bT_1$ contained in $H_\chi$. Since $H_\chi$ is one-dimensional, this forces $\im(\mu) =H_\chi^\circ$. 
\end{proof}

Recall from \S\ref{ssec:condition} that $\{\alpha^\ast \colon \alpha \in \Delta\} \subseteq X_\ast(\bT_1^{\ad})_\bQ$ is the set of fundamental coweights, defined as the basis of $X_\ast(\bT_1^{\ad})_\bQ$ dual to $\Delta$.  

\begin{prop}\label{prop:existence-of-good-coxeter}
Assume Condition \eqref{cond:condition_on_rootsystem} holds for $q$ and $\bfG$. Then there exists a set of simple roots $\Delta \subset \Phi(\bfT,\bfG)$ such that
\begin{itemize}
 \item[(i)] $F$ acts on $X^\ast(\bfT)$ as $qc\sigma$ where $\sigma$ satisfies $\sigma(\Delta) = \Delta$, $c \in W$ and $c\sigma$ is a twisted Coxeter element of $(W,\sigma)$ such that 
 $$ \ell(c\sigma(c)\cdots \sigma^{i-1}(c))= i\ell(c)$$
 for all $0 \leq i \leq h/2$, where $h$ is the Coxeter number of $(W,\sigma)$.
 \item[(ii)] For all $\alpha \in \Delta$, and all $\gamma \in \Phi(\bfT,\bfG)$ we have $H_{\alpha^*}^\circ \not\subseteq \ker(\gamma)$. 
\end{itemize}
\end{prop}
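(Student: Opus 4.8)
The plan is to build the desired set of simple roots $\Delta$ from the combinatorics of twisted Coxeter elements, treating (i) and (ii) as two essentially independent constraints that we will show can be satisfied simultaneously by varying $\Delta$ within a single $W$-orbit. For part (i), recall that up to replacing $(\bfT,\bfU)$ by a stably conjugate pair (which changes $(W,S,F)$ only by a canonical isomorphism, cf.\ Section \ref{sec:Cox_tori_setup}), we may write $F = qc\sigma$ on $X^\ast(\bfT)$ with $\sigma(\Delta)=\Delta$ and $c\sigma$ a twisted Coxeter element. The length condition $\ell(c\sigma(c)\cdots\sigma^{i-1}(c)) = i\ell(c)$ for $0 \le i \le h/2$ is a known structural fact about Coxeter elements: one chooses a reduced expression of $c$ as a product of simple reflections, one from each $\sigma$-orbit, ordered compatibly with a bipartition of the Dynkin diagram (the ``$cc^{-1}$'' or Steinberg-type ordering), and then uses the no-cancellation behaviour of Coxeter elements under powers up to half the Coxeter number. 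First I would cite (or reprove via the standard bipartite-Coxeter-element argument, e.g.\ following Bourbaki/Springer on the geometry of the Coxeter plane) that such a $c$ exists, and fix the corresponding $\Delta$ as a ``base'' choice; all remaining freedom is to act by an element of $W$ (equivalently, to conjugate $c$ within its class), which does not affect (i) as long as we stay in the cyclic-shift class realizing the same length pattern.

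For part (ii), the point is that $H_{\alpha^\ast}^\circ = \im(\mu_\alpha)$ by Lemma \ref{lm:connected_component}, where $\mu_\alpha \in X_\ast(\bT_1)$ is the (up to scalar unique) nonzero cocharacter with $F\mu_\alpha - \mu_\alpha \in \bQ\cdot\alpha^\ast$; concretely, since $F$ acts as $q \cdot (c\sigma)$ on cocharacters, $\mu_\alpha$ is, up to scaling, the solution of $(qc\sigma - 1)\mu_\alpha \in \bQ\cdot\alpha^\ast$. The condition $H_{\alpha^\ast}^\circ \not\subseteq \ker(\gamma)$ for all roots $\gamma$ is precisely $\langle \gamma, \mu_\alpha\rangle \neq 0$, i.e.\ $\mu_\alpha$ lies off the union of the root hyperplanes $\gamma^\perp$ in $X_\ast(\bT_1)_\bQ$. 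I would compute $\mu_\alpha$ explicitly: write $\alpha^\ast$ in the basis of fundamental coweights (it is one of them, by definition), invert $(qc\sigma - 1)$ over $\bQ$ — this is where the eigenvalues of $c\sigma$ enter, and $q > 1$ guarantees $1$ is not an eigenvalue of $qc\sigma$ so the inverse exists — and read off $\mu_\alpha$ as a $\bQ$-linear combination of the $c\sigma$-orbit of $\alpha^\ast$. The vanishing locus we must avoid is a finite union of hyperplanes; the heart of the matter is to show $\mu_\alpha$ avoids it. Here Condition \eqref{cond:condition_on_rootsystem}, $q > M = \max_{\alpha}\langle\alpha_0,\alpha^\ast\rangle$, enters: it forces the ``geometric series'' expansion $(qc\sigma-1)^{-1} = -\sum_{j\ge 0}(qc\sigma)^{-j-1}\cdot(-1)$... more precisely $(q c\sigma - 1)^{-1} = q^{-1}(c\sigma)^{-1}(1 - q^{-1}(c\sigma)^{-1})^{-1} = \sum_{j \ge 1} q^{-j}(c\sigma)^{-j}$ to have its leading term ($j=1$) dominate all the others in the relevant pairings against roots, because the pairing of any root with any $W$-translate of a fundamental coweight is bounded by $M$ in absolute value. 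Thus $\langle \gamma, \mu_\alpha\rangle$ is a convergent $q$-adic-looking sum whose $j=1$ term has absolute value governed by $\langle \gamma, (c\sigma)^{-1}\alpha^\ast\rangle$, and the tail is strictly smaller once $q > M$; so if we can arrange the leading term to be nonzero, the whole sum is nonzero.

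The main obstacle — and the step requiring the real work — is ensuring the leading term $\langle \gamma, (c\sigma)^{-1}\alpha^\ast\rangle$ is nonzero \emph{for a good choice of $\Delta$ within its $W$-orbit}, for every root $\gamma$ and every simple $\alpha$. This is a finite, type-by-type check: for each irreducible type of Dynkin diagram one must exhibit a twisted Coxeter element $c$ (compatibly with the bipartite/length constraint from (i)) such that $(c\sigma)^{-1}$ sends no fundamental coweight onto a root hyperplane, or more robustly such that the full sum $\sum_{j\ge1}q^{-j}\langle\gamma,(c\sigma)^{-j}\alpha^\ast\rangle$ never vanishes. I expect the cleanest route is: (a) reduce to the untwisted case by folding the diagram along $\sigma$ so that $c\sigma$ becomes an honest Coxeter element of the folded (possibly non-crystallographic) system, or work directly with $c\sigma$ acting on the Coxeter plane; (b) use that a Coxeter element acts on $X_\ast(\bT_1)_\bQ$ with no eigenvalue equal to $1$ and with the fundamental coweights in ``general position'' relative to its orbit structure; (c) invoke Condition \eqref{cond:condition_on_rootsystem} to upgrade ``leading term nonzero'' to ``sum nonzero''. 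Compatibility of (i) and (ii) is then the assertion that among the (cyclic-shift class of) twisted Coxeter elements satisfying the length condition, at least one also satisfies the genericity condition (ii); since (ii) excludes only finitely many hyperplanes and the twisted Coxeter elements form a nonempty set closed under the relevant symmetries, one either checks this directly in each type or argues that a generic cyclic ordering of the simple reflections works. This last coordination between (i) and (ii) is the delicate point and, I suspect, the reason the constant $M$ (rather than a cruder bound) appears in the statement.
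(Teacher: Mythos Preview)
Your handling of part (ii) has a genuine gap. The ``leading term dominates'' strategy hinges on arranging $\langle \gamma, (c\sigma)^{-1}\alpha^\ast\rangle \neq 0$ for every root $\gamma$ and every simple $\alpha$, but this cannot be done: $(c\sigma)^{-1}\alpha^\ast$ is a $W$-translate of a fundamental coweight, and fundamental coweights lie on root hyperplanes essentially by definition (e.g.\ in type $A_2$ with $c = s_1 s_2$ one has $c^{-1}\alpha_1^\ast = -\alpha_2^\ast$, and $\langle \alpha_1, \alpha_2^\ast\rangle = 0$). You recognise this as ``the main obstacle'' and retreat to a type-by-type check, but that is not an argument, and the subsequent suggestions (folding, eigenvalue $1$, etc.) do not address the actual vanishing. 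The problem is structural, not a matter of choosing $\Delta$ cleverly within a $W$-orbit.

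The paper avoids this entirely, and the key device is the one you invoke only for part (i) and then drop: Springer's regular eigenvector. One takes $v \in X^\ast(\bfT^{\rm ad})_\bC$ a $\zeta$-eigenvector of $\tau = q^{-1}F$ lying off every coroot hyperplane, and \emph{defines} $\Delta$ by the condition $\mathrm{Re}\langle v,\alpha^\vee\rangle > 0$. Part (i) then follows from a result of Brou\'e--Michel. For part (ii), since each $\alpha^\ast$ is a non-negative combination of simple coroots, $\mathrm{Re}\langle v,\alpha^\ast\rangle > 0$; and since the $\tau$-orbit of any root $\gamma$ spans a subspace containing $v$, one concludes that $\langle \tau^i(\gamma),\alpha^\ast\rangle \neq 0$ for \emph{some} $i$, with no control over which. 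The domination argument is then run with the \emph{finite} sum $(F-1)^{-1} = (q^h-1)^{-1}\sum_{i=0}^{h-1} F^i$ and the \emph{maximal} nonzero index $i_0$: the term $q^{i_0}\langle\tau^{i_0}(\gamma),\alpha^\ast\rangle$ has absolute value at least $q^{i_0}$, while the remaining (lower-degree) terms are bounded by $M(q^{i_0}-1)/(q-1) \leq q^{i_0}-1$ under the hypothesis $q-1 \geq M$. So the sum is nonzero, uniformly and without casework. The point you are missing is that the same eigenvector $v$ both manufactures $\Delta$ and supplies the nonvanishing input; treating (i) and (ii) as independent constraints to be reconciled after the fact is exactly what makes your approach stall.
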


\begin{proof}
Let $\tau = q^{-1}F$. Then the order of $\tau$ is $h$. Let $g \in \bfG$ be such that $\bfT_0 = {}^g \bfT$. By assumption on $\bfT$ the endomorphism ${}^g \tau$ of $X^\ast(\bfT_0)$ lies in the $W_0$-conjugacy class of twisted Coxeter elements (this class is unique by \cite[Thm. 7.6]{Springer_74}). Therefore the same holds for $\tau$ in $W$. Let  $\zeta =\mathrm{exp}(2\pi \mathrm{i}/h)$. By \cite[Thm. 7.6]{Springer_74}, the $\zeta$-eigenspace of $\tau$ on $X^\ast(\bfT^{\ad})_{\bC}$ is one-dimensional and is not contained in any reflection hyperplane. Let $0 \neq  v \in X^\ast(\bfT^{\ad})_\bC$ be an eigenvector of $\tau$ for the eigenvalue $\zeta$ such that $\mathrm{Re}(\langle v,\alpha^\vee\rangle) \neq 0$ for all $\alpha \in \Phi(\bfT,\bfG)$. Then as shown in the proof \cite[Prop. 4.10]{Springer_74} the condition  $\mathrm{Re}(\langle v,\alpha^\vee\rangle) > 0$ defines a set of positive roots $ \Phi^+ \subset \Phi(\bfT,\bfG)$, hence a basis $\Delta$. Let $c \in W$ be the unique element in $W$ such that $c(\Delta) = \tau(\Delta)$ and $\sigma = c^{-1} \tau$. Then $\sigma(\Delta) = \Delta$ and (i) follows from \cite[Prop. 6.5]{BroueMichel_94}.

\smallskip

Let $\alpha \in \Delta$ be a simple root and $\gamma \in \Phi(\bfT,\bfG)$ be any root. The orbit of $\gamma$ under $\tau = q^{-1} F = c\sigma $ has exactly $h$ elements, see \cite[Thm. 7.6]{Springer_74}. If $V = \langle \tau^i(\gamma) \colon i =0,\ldots,h-1\rangle$ is the $\bC$-vector subspace of $X^\ast(\bfT^{\ad})_\bC$ spanned by the orbit, then $\tau$ restricts to an automorphism of $V$ of order $h$. In particular it must contain the eigenvector $v$ defined above. Since $\alpha^*$ is a non-negative combination of simple coroots we deduce that $\mathrm{Re}(\langle v,\alpha^*\rangle) > 0$, which forces $\langle \tau^i(\gamma),\alpha^*\rangle \neq 0$ for some $i$. Let $i_0 \in \{0,\ldots,h-1\}$ be maximal such that $\langle \tau^i(\gamma),\alpha^*\rangle \neq 0$. Then
$$ \begin{aligned}
\sum_{i =0}^{h-1} \langle F^i(\gamma),\alpha^* \rangle & \, = \sum_{i =0}^{h-1} q^i \langle \tau^i(\gamma),\alpha^* \rangle \\
& \, = q^{i_0} \langle \tau^{i_0}(\gamma),\alpha^* \rangle + \sum_{i =0}^{i_0-1} q^i \langle \tau^i(\gamma),\alpha^* \rangle
\end{aligned}$$ 
so that
$$ \begin{aligned}
\left|\sum_{i =0}^{h-1} \langle F^i(\gamma),\alpha^* \rangle \right|& \, \geq 
 q^{i_0} | \langle \tau^{i_0}(\gamma),\alpha^* \rangle| - \sum_{i =0}^{i_0-1} q^i |\langle \tau^i(\gamma),\alpha^* \rangle|\\ 
 & \, \geq q^{i_0} - \sum_{i =0}^{i_0-1} q^iM =  q^{i_0} - M \frac{q^{i_0}-1}{q-1}\\
 & \,  \geq q^{i_0}- (q-1) \frac{q^{i_0}-1}{q-1} = 1
\end{aligned}$$ 
since by Condition \eqref{cond:condition_on_rootsystem}  we have $q-1 \geq M$. This proves that  $\sum_{i =0}^{h-1} \langle F^i(\gamma),\alpha^* \rangle \neq 0$. Now recall that $F^h = q^h$ on $X^*(\bfT)$. We have $(F-1)\sum_{i =0}^{h-1} F^i = F^h-1 = q^h-1$ therefore $(F-1)$ is invertible on $X^*(\bfT)$ and $(F-1)^{-1} = (q^h-1)^{-1} \sum_{i =0}^{h-1} F^i$. We deduce that 
$$\langle (F-1)^{-1}\gamma, \alpha^*\rangle  = \langle \gamma, (F-1)^{-1}\alpha^*\rangle   \neq 0.$$
Consequently, for any $0 \neq \mu \in (F-1)^{-1} \bQ \cdot \alpha^*$ we have $\langle \gamma,\mu\rangle \neq 0$. Using Lemma~\ref{lm:connected_component} we get $H_{\alpha^*}^\circ = \im(\mu)$ and we deduce that $H_{\alpha^*}^\circ \not\subseteq \ker(\gamma)$.
\end{proof}

\subsection{A consequence}\label{sec:consequences_regularity}

We have the short exact sequence of $\bF_q$-groups
\[
0 \rar \bT^1 \rar \bT \rar \bT_1 \rar 0,
\]
which is (canonically) split by the Teichm\"uller lift. Moreover, we have an isomorphism $\bT \cong \bT^1 \times \bT_1$ which sends the unipotent part $\bT_{{\rm unip}}$ to $\bT^1$ and the reductive part $\bT_{{\rm red}}$ to $\bT_1$. This also applies to $\bT'$ instead of $\bT$.

Let now $\bfL$ be a proper Levi subgroup of $\bfG$ containing $\bfT$, and let $v \in W(\bT_1, \bT'_1)$. We will be interested in the closed subgroup 
\[
H_{\bfL,v,r} = \{(t,t') \in \bT \times \bT' \colon t^{-1}F(t) = \dot v t^{\prime -1}F(t') \dot v^{-1} \text{ centralizes $\bL$} \} \subseteq \bT \times \bT'.
\]
Being affine and commutative, $H_{\bfL, v, r}$ decomposes into the product of its unipotent and reductive parts, $H_{\bfL, v, r} \cong H_{\bfL, v, r,{\rm unip}} \times H_{\bfL, v, r,{\rm red}}$, and we have $H_{\bfL, v, r,\red} \subseteq \bT_\red \times \bT'_{\rm red} \cong \bT_1 \times \bT_1'$. 

\begin{prop}\label{prop:regularity_general}
Assume Condition \eqref{cond:condition_on_rootsystem} holds for $q$ and $\bfG$. Suppose that $(\bfT,\bfU)$, $(\bfT',\bfU')$ are such that the corresponding sets of simple roots satisfy the conclusion of Proposition \ref{prop:existence-of-good-coxeter}.
Let $\bfL$, $v$ be as above. Consider the connected component $H_{\bfL,v,r, \red}^\circ$ of the reductive part of $H_{\bfL,v,r}$. Let $H$ (resp. $H'$) denote the image of $H_{\bfL,v,r, \red}^\circ$ under 
\[
H_{\bfL,v,r, \red}^\circ \har H_{\bfL,v,r} \rar \bT \times \bT' \tar \bT_1 \times \bT_1' \stackrel{\pr}{\tar} \bT_1,
\]
(resp. the image of the same map with $\bT_1$ on the right replaced by $\bT_1'$).
Then for all $\gamma \in \Phi(\bfT,\bfG)$, $H$ is not contained in the subtorus $\ker(\gamma) \subseteq \bT_1$, and similarly for $H'$ and all $\gamma' \in \Phi(\bfT',\bfG)$.
\end{prop}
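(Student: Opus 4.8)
\emph{Plan.} The idea is to reduce the statement, via the Lang map on tori, to the $q$-adic estimate already carried out in the proof of Proposition~\ref{prop:existence-of-good-coxeter}. First I would dispose of the unipotent part: being affine and commutative, $H_{\bfL,v,r}$ splits as the product of its unipotent and reductive parts, and $H_{\bfL,v,r,\red}\subseteq\bT_{\red}\times\bT'_{\red}\cong\bT_1\times\bT_1'$, so $H$ (resp.\ $H'$) is the image of the connected group $H_{\bfL,v,r,\red}^\circ$ under the projection $\bT_1\times\bT_1'\rar\bT_1$ (resp.\ $\rar\bT_1'$). An element $z\in\bT_1$ centralizes $\bL$ exactly when $\alpha(z)=1$ for all $\alpha\in\Phi(\bfT,\bfL)$, so the relevant centralizer is the diagonalizable group $\bfS:=\bigcap_{\alpha\in\Phi(\bfT,\bfL)}\ker(\alpha\colon\bT_1\rar\bG_m)$, with $\bfS^\circ=Z(\bfL)^\circ$.

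Next I would identify $H$ and $H'$ explicitly. Conjugation by $\dot v$ identifies $\bT_1'$ with $\bT_1$ and intertwines the two Lang maps, so $H_{\bfL,v,r,\red}$ is the preimage of the diagonal copy of $\bfS$ in $\bT_1\times\bT_1$ under $(t,t')\mapsto(t^{-1}F(t),\,\dot v\,t'^{-1}F(t')\,\dot v^{-1})$, a product of two Lang maps, hence finite étale and surjective. Passing to identity components, $(t,t')\mapsto t^{-1}F(t)$ is a finite étale surjection $H_{\bfL,v,r,\red}^\circ\tar Z(\bfL)^\circ$ that factors through the projection to $H$; consequently the Lang map of $\bT_1$ restricts to a surjection $H\tar Z(\bfL)^\circ$ with finite kernel, so $F-1$ carries $X_\ast(H)_\bQ$ isomorphically onto $X_\ast(Z(\bfL)^\circ)_\bQ$. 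Since $F-1$ is invertible on $X_\ast(\bT_1)_\bQ$ (established in the proof of Proposition~\ref{prop:existence-of-good-coxeter}, as $F^h=q^h$), this gives $X_\ast(H)_\bQ=(F-1)^{-1}X_\ast(Z(\bfL)^\circ)_\bQ$. The same computation applied to the Coxeter pair $(\bfT',\bfU')$ --- which by hypothesis also satisfies the conclusion of Proposition~\ref{prop:existence-of-good-coxeter} --- gives $X_\ast(H')_\bQ=(F-1)^{-1}X_\ast(Z(\dot v^{-1}\bfL\dot v)^\circ)_\bQ$, where $\dot v^{-1}\bfL\dot v$ is a proper Levi containing $\bfT'$, so the two cases are symmetric and I would only treat $H$.

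With this, the claim $H\not\subseteq\ker\gamma$ becomes $(F-1)^{-1}X_\ast(Z(\bfL)^\circ)_\bQ\not\subseteq\gamma^\perp$, equivalently $(F-1)^{-1}\gamma\notin\mathrm{span}_\bQ\Phi(\bfT,\bfL)$, using the perfect $F$-equivariant pairing of \S\ref{ssec:condition} and the invertibility of $F-1$. Because $Z(\bfL)^\circ\supseteq Z(\bfM)^\circ$ whenever $\bfL\subseteq\bfM$, I may assume $\bfL$ is a maximal proper Levi; then $\Phi(\bfT,\bfL)=w\Phi_{I}$ with $I=\Delta\setminus\{\beta\}$ for some $w\in W$ and $\beta\in\Delta$, and $\mathrm{span}_\bQ\Phi(\bfT,\bfL)=\{w(\beta^\ast)\}^\perp$, so I need $\langle(F-1)^{-1}\gamma,\,w(\beta^\ast)\rangle\neq0$. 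Expanding $(F-1)^{-1}=(q^h-1)^{-1}\sum_{i=0}^{h-1}F^i$ and using $F=q\tau$ on $X^\ast(\bfT^{\ad})$, this quantity equals $(q^h-1)^{-1}\sum_{i=0}^{h-1}q^i a_i$ with $a_i:=\langle w^{-1}\tau^i\gamma,\,\beta^\ast\rangle\in\bZ$; since $w^{-1}\tau^i\gamma$ is a root, $a_i$ is a coefficient of the simple root $\beta$ in a root and hence $|a_i|\le M\le q-1$, so the same $q$-adic estimate as in the proof of Proposition~\ref{prop:existence-of-good-coxeter}(ii) yields $\sum_i q^i a_i\neq0$ as soon as $a_i\neq0$ for at least one $i$.

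That some $a_i$ is nonzero means precisely that the $\tau$-orbit of $\gamma$ is not contained in $\Phi(\bfT,\bfL)$. If it were, the $\bC$-span $V$ of that orbit would be a $\tau$-stable subspace of $\mathrm{span}_\bC\Phi(\bfT,\bfL)$; but, exactly as in the proof of Proposition~\ref{prop:existence-of-good-coxeter}(ii), $V$ contains the regular $\zeta$-eigenvector of $\tau$. So the remaining point --- which I expect to be the main obstacle --- is to show that this eigenvector lies in no proper Levi subspace of $X^\ast(\bfT^{\ad})_\bC$. This needs genuine input from Springer's theory of regular elements and is not immediate from the plain regularity of the eigenvector, since regularity only controls its pairing with coroots, whereas a non-standard maximal Levi is cut out by the central cocharacter $w(\beta^\ast)$, which is a coweight and not a coroot. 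Once this is granted, everything else is the bookkeeping around the Lang map above together with the estimate from Proposition~\ref{prop:existence-of-good-coxeter}.
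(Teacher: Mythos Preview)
Your outline closely parallels the paper's: reduce to a maximal Levi, identify $H$ with the connected Lang-preimage in $\bT_1$ of $Z(\bfL)^\circ$, and feed this into Proposition~\ref{prop:existence-of-good-coxeter}. Two substantive points separate the proposal from the paper's argument.

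First, your identification of $H_{\bfL,v,r,\red}$ with a Lang-preimage inside $\bT_1\times\bT_1'$ silently assumes that an element $z\in\bT_1$ with $\alpha(z)=1$ for all $\alpha\in\Phi(\bfT,\bfL)$ has Teichm\"uller lift centralizing the \emph{deep-level} group $\bL=\bL_r$, not just $\bL_1$. This is precisely where the depth $r>1$ enters, and it is the content of the paper's Lemma~\ref{lm:TMlift_centralizes_Lr}: the lift ${\rm TM}(z)$ preserves the Moy--Prasad filtration on each root group $\bU_{\beta,r}$ and acts trivially on every graded piece (since the action there factors through $\beta\colon\bT_1\to\bG_m$ and $\beta(z)=1$); hence the induced map from the torus ${\rm TM}(\im\alpha^\ast)$ into the \emph{unipotent} group of filtration-preserving automorphisms trivial on the associated graded is itself trivial. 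Without this step you only get one inclusion between $H_{\bfL,v,r,\red}$ and the Lang-preimage, and it is the wrong one for the surjectivity you need.

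Second, the route you propose for the ``main obstacle'' cannot succeed: the Coxeter $\zeta$-eigenvector \emph{does} lie in proper Levi subspaces in general. In $D_4$ with $\tau=s_1s_2s_3s_4$ one has $\tau(e_4)=-e_4$, so the $\zeta$-eigenvector $v=\zeta^2 e_1+\zeta e_2+e_3$ lies in $\langle e_1,e_2,e_3\rangle$, and $\langle v,\,e_1-e_2+e_3-e_4\rangle=\zeta^2-\zeta+1=0$. Since $\tfrac12(e_1-e_2+e_3-e_4)$ is $W(D_4)$-conjugate to the fundamental coweight $\omega_4^\vee$, this is the central cocharacter of a genuine maximal Levi of type $A_3$ (whose root system is even $\tau$-stable), and the $\tau$-orbit of $e_3+e_4$ sits entirely inside it. So no appeal to Springer's regular-element theory will deliver the statement you formulate. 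The paper does not attempt this: it writes $X_\ast(Z(\bfL)^\circ)_\bQ=\bQ\alpha^\ast$ for the fundamental coweight of a basis $\Delta_1$ making $\bfL$ standard and then directly cites Proposition~\ref{prop:existence-of-good-coxeter}(ii). That proposition is stated and proved for fundamental coweights of the distinguished basis $\Delta$ constructed in part~(i), where the positivity $\mathrm{Re}\langle v,\alpha^\ast\rangle>0$ is available because $\alpha^\ast$ is a non-negative combination of simple coroots --- exactly the property that fails for an arbitrary $W$-conjugate $w(\beta^\ast)$ such as the one above.
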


\begin{proof}
Enlarging $\bfL$ makes its centralizer smaller, hence we may assume that $\bfL$ is a maximal proper Levi subgroup containing $\bfT$. We show only the claim for $H$, the one for $H'$ has a similar proof. Let
\[
H_{\bfL,r} = \{t \in \bT \colon t^{-1}F(t) \text{ centralizes } \bL \} \subseteq \bT.
\] 
The projection to the first factor $H_{\bfL,v,r} \rar H_{\bfL,r}$, $(t,t') \mapsto t$ is surjective (by Lang's theorem for the connected group $\bT'$), hence induces also a surjection on the reductive parts and hence also on their connected components, so it suffices to show that the connected component of
\[
H_1' := \im\left( H_{\bfL,r,\red}^\circ \har H_{\bfL,r} \rar \bT \tar \bT_1 \right)
\]
is not contained in $\ker(\gamma)$ for any $\gamma \in \Phi(\bfT,\bfG)$. 

By maximality of $\bfL$ there exists a system of simple positive roots $\Delta_1 \subseteq \Phi(\bfT,\bfG)$ and some $\alpha \in \Delta_1$ such that $\bfL$ is generated by $\bfT$ and all $\bfU_{\beta}$, $\bfU_{-\beta}$ with $\beta \in \Delta_1 \sm \{\alpha\}$. Alternatively, we can characterize $\bfL$ as follows: $\Delta_1$ forms a basis of $X^\ast(\bfT^{\rm ad})_\bQ$, and we have the fundamental coweights $\{\beta^\ast\}_{\beta \in \Delta_1}$ which form the dual basis of $X_\ast(\bfT^{\rm ad})_\bQ$. Then $\bfL$ is equal to the centralizer in $\bfG$ of a(ny) lift of $\alpha^\ast$ to $X_\ast(\bfT)_\bQ$ (again denoted $\alpha^\ast$). By Proposition \ref{prop:existence-of-good-coxeter}, the subgroup $H_{\alpha^\ast}^\circ$ of $\bT_1$ studied in Section \ref{sec:regularity_preparations} is not contained in $\ker(\gamma)$ for any $\gamma \in \Phi(\bfT,\bfG)$. Thus it suffices to show that $H_1' \supseteq H_{\alpha^\ast}^\circ$.

We have the Teichm\"uller lift ${\rm TM} \colon \bT_1 \rar \bT$, inducing an isomorphism $\bT_1 \stackrel{\sim}{\rar} \bT_\red$ onto the reductive part of $\bT$. Restricted to $H_{\alpha^\ast}^\circ$, ${\rm TM}$ induces an isomorphism ${\rm TM} \colon H_{\alpha^\ast}^\circ \stackrel{\sim}{\rar} {\rm TM}(H_{\alpha^\ast}^\circ)$ onto a subgroup of $\bT_\red$.

\begin{lm}\label{lm:TMlift_centralizes_Lr}
For any $\tilde{t} \in {\rm TM}(H_{\alpha^\ast}^\circ)$, $\tilde{t}^{-1} F(\tilde{t})$ centralizes $\bU_{\beta,r}$ for all $\beta \in \Phi(\bfT,\bfL)$ and consequently, it centralizes $\bL_r$. In particular, we have ${\rm TM}(H_{\alpha^\ast}^\circ) \subseteq H_{\bfL,r}$.
\end{lm}
\begin{proof}[Proof of Lemma \ref{lm:TMlift_centralizes_Lr}] Teichm\"uller lift commutes with Frobenius $F$, hence the map $t \mapsto t^{-1}F(t) \colon H_{\alpha^\ast}^\circ \rar \im(\alpha^\ast)$ induces a map $\tilde t \mapsto \tilde t^{-1} F(\tilde t) \colon {\rm TM}(H_{\alpha^\ast}^\circ) \rar {\rm TM}(\im(\alpha^\ast))$. Thus we have to show that ${\rm TM}(\im(\alpha^\ast)) \subseteq \bT$ centralizes $\bU_{\beta,r}$.

We have the homomorphism $\bT \rar \Aut(\bU_{\beta})$ given by the action of $\bT$ on $\bU_{\beta}$. The group $\bU_{\beta} = \bU_{\beta,r}$ comes with a filtration by closed subgroups $\bU_{\beta}^i = \ker(\bU_{\beta,r} \rar \bU_{\beta,i})$ ($0\leq i\leq r$) and the action of $\bT_r$ preserves this filtration, i.e., the above homomorphism factors through a homomorphism 
\[
\bT \rar \Aut_{\rm fil}(\bU_{\beta}), 
\]
where $\Aut_{\rm fil}(\bU_{\beta}) \subseteq \Aut(\bU_{\beta})$ is the subgroup of automorphisms preserving the filtration. This subgroup fits into an exact sequence
\[
1 \rar \Aut_{\rm fil, 0}(\bU_{\beta}) \rar \Aut_{\rm fil}(\bU_{\beta}) \rar Q \rar 1,
\]
where $\Aut_{\rm fil, 0}(\bU_{\beta})$ is the subgroup of automorphisms inducing the identity on the graded object ${\rm gr}^\bullet \bU_\beta = \bigoplus_{i=0}^{r-1} \bU_{\beta,i+1}^i$, and $Q$ is defined by exactness of the above sequence. The composition 
\[
{\rm TM}(\im(\alpha^\ast)) \subseteq \bT_r \rar \Aut_{\rm fil}(\bU_{\beta})
\]
factors through $\Aut_{\rm fil, 0}(\bU_{\beta})$: Indeed, the image of ${\rm TM}(\im(\alpha^\ast))$ in $\bT_1$ lies in $\im(\alpha^\ast) \subseteq \ker(\beta)$ (the latter inclusion holds as $\langle \beta, \alpha^\ast \rangle = 0$), hence it acts trivially on $\bU_{\beta,i+1}^i$ for each $0 \leq i \leq r-1$. But $\Aut_{\rm fil, 0}(\bU_{\beta})$ is unipotent, whereas $TM(\im(\alpha^\ast)) \cong \im(\alpha^\ast)$ is a torus, hence the resulting morphism 
\[
{\rm TM}(\im(\alpha^\ast)) \rar \Aut_{\rm fil,0}(\bU_{\beta})
\]
is trivial. This proves the lemma.
\end{proof}

By Lemma \ref{lm:TMlift_centralizes_Lr}, ${\rm TM}(H_{\alpha^\ast}^\circ) \subseteq H_{\bfL,r}$. Being reductive and connected, ${\rm TM}(H_{\alpha^\ast}^\circ)$ is thus contained in $H_{\bfL,r,\red}^\circ$. This shows that the image of $H_{\bfL,r,\red}^\circ$ in $\bT_1$ contains the image of ${\rm TM}(H_{\alpha^\ast}^\circ)$, which is just $H_{\alpha^\ast}^\circ$. \qedhere
\end{proof}

\begin{cor}\label{cor:regularity_general}
Under the assumptions of Proposition \ref{prop:regularity_general}, let $\widetilde H$ (resp. $\widetilde H'$) denote the image of the map 
\[
H_{\bfL,v,r, \red}^\circ \har H_{\bfL,v,r} \rar \bT \times \bT' \stackrel{\pr}{\tar} \bT
\]
(resp. the image of the same map with $\bT$ on the right replaced by $\bT'$).
Let $\bV$ be the subgroup of $\bG$ corresponding to the unipotent radical $\bfV$ of an arbitrary Borel subgroup of $\bfG$ containing $\bfT$. Then $\bV^{\widetilde H} = \{1\}$, i.e., the only element of $\bV$ fixed by the adjoint action of $\widetilde H$ is $1$. The analogous statement holds for $\bT',\bV',\widetilde H'$.
\end{cor}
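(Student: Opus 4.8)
The plan is to reduce the statement about $\bV$ to the statement about root subgroups, and then apply Proposition~\ref{prop:regularity_general}. First I would recall that $\bV$ admits a filtration by closed subgroups $\bV^i = \ker(\bV_r \rar \bV_i)$ with successive quotients $\bV^i/\bV^{i+1}$, and that the adjoint $\widetilde H$-action preserves this filtration; moreover ${\rm gr}^\bullet \bV = \bigoplus_{\alpha \in \Phi(\bfT,\bfV)} \bigoplus_{i} \bU_{\alpha,i+1}^i$ as a $\bT$-module, with $\bT$ acting on the $\alpha$-graded piece through the character $\alpha$ (composed with the various Teichm\"uller-twisted reductions). Since $\widetilde H$ is a (connected) torus acting on the unipotent group $\bV$, and the fixed-point functor of a torus is exact on such filtered unipotent groups, it suffices to show that $\widetilde H$ has no nonzero fixed vector on any graded piece ${\rm gr}^i_\alpha \bV$ — equivalently, that the image of $\widetilde H$ in $\bT_1$ under $\pr$ is not contained in $\ker(\alpha)$ for any $\alpha \in \Phi(\bfT,\bfV) \subseteq \Phi(\bfT,\bfG)$.

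Second, I would observe that the image of $\widetilde H = \pr_{\bT}(H_{\bfL,v,r,\red}^\circ)$ in $\bT_1$ is exactly the group called $H$ in Proposition~\ref{prop:regularity_general} (the map defining $H$ factors through $\pr$ to $\bT$ and then the reduction $\bT \tar \bT_1$). Hence Proposition~\ref{prop:regularity_general} directly gives $H \not\subseteq \ker(\gamma)$ for every $\gamma \in \Phi(\bfT,\bfG)$, in particular for every root occurring in ${\rm gr}^\bullet \bV$. Since $\widetilde H$ is reductive, its action on the unipotent $\Aut_{\rm fil,0}$-part is trivial (exactly as in the proof of Lemma~\ref{lm:TMlift_centralizes_Lr}), so a fixed point on $\bV$ is detected entirely on the graded pieces; combining with the previous paragraph yields $\bV^{\widetilde H} = \{1\}$. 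The argument for $\widetilde H'$, $\bV'$ is identical, using the $H'$-part of Proposition~\ref{prop:regularity_general} and $\Phi(\bfT',\bfV') \subseteq \Phi(\bfT',\bfG)$.

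I would be slightly careful about one point which I expect to be the main (though minor) obstacle: making precise the claim that ``a torus fixing a point of a filtered unipotent group fixes a point of each graded piece''. The clean way is an induction on $r$: write $1 \to \bV^{r-1} \to \bV \to \bV_{r-1} \to 1$, note $\bV^{r-1} \cong \prod_{\alpha} \bU_{\alpha,r}^{r-1}$ is a vector group on which $\widetilde H$ acts through the characters $q^{r-1}\alpha$ (or the appropriate Frobenius-twist), so $\widetilde H$ acts without nonzero fixed vectors there by the non-vanishing just established; then for $v \in \bV^{\widetilde H}$ its image in $\bV_{r-1}$ is $\widetilde H$-fixed, hence trivial by induction, so $v \in (\bV^{r-1})^{\widetilde H} = \{1\}$. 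The base case $r=1$ is the statement that $\widetilde H \to \bT_1$ has image not contained in any $\ker(\alpha)$, which is Proposition~\ref{prop:regularity_general}. One should also check that $\widetilde H$ acting through a nontrivial character on a vector group $\bG_a^{\oplus}$ indeed has no nonzero fixed vector, which is immediate since the weight spaces for distinct characters are linearly independent and none of the relevant weights is trivial on $\widetilde H$.

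Finally, I would note that the hypotheses of Proposition~\ref{prop:regularity_general} — namely Condition~\eqref{cond:condition_on_rootsystem} and that the simple-root systems of $(\bfT,\bfU)$, $(\bfT',\bfU')$ satisfy the conclusion of Proposition~\ref{prop:existence-of-good-coxeter} — are inherited from the hypotheses of the corollary, so no additional assumption is needed; and the group $H_{\bfL,v,r}$ appearing here is the same as in Proposition~\ref{prop:regularity_general}, so the corollary is an essentially immediate repackaging once the filtration/graded-piece reduction is in place.
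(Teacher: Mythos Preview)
Your proposal is correct and follows essentially the same approach as the paper: reduce to graded pieces, identify the image of $\widetilde H$ in $\bT_1$ with the group $H$ of Proposition~\ref{prop:regularity_general}, and use that $H \not\subseteq \ker(\gamma)$ to conclude that each graded piece has trivial fixed points, then induct. The only organizational difference is that the paper first decomposes $\bV$ as a product of root subgroups $\bU_\gamma$ (using the unique product expression compatible with the $\bT$-action) and then inducts on the level $r'$ within each $\bU_\gamma$, whereas you induct on $r$ for all of $\bV$ at once; the content is the same. Your aside about $\Aut_{\rm fil,0}$ and Lemma~\ref{lm:TMlift_centralizes_Lr} is a slight red herring --- what is actually needed (and what you correctly spell out in your induction) is just that the $\bT$-action on each $\bU_{\gamma,r'}^{r'-1} \cong \bG_a$ factors through $\bT_1$ via the character $\gamma$, which is nontrivial on the image of $\widetilde H$.
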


\begin{proof}
We prove only the first claim. The proof of the second is similar. Any element of $\bV(\obF)$ has a unique presentation as a product of elements in the  subgroups $\bU_\gamma$ corresponding to root subgroups $\bfU_{\gamma} \subseteq \bfG$ for $\gamma \in \Phi(\bfT,\bfV)$, and this product decomposition is compatible with the adjoint action of $\bT$. This reduces the corollary to the claim that $\bU_\gamma^{\widetilde H} = \{1\}$ for all $\gamma \in \Phi(\bfT,\bfV)$. For the latter, we can use induction on $1\leq r'\leq r$: it suffices to show that if $x \in \bU_{\gamma,r}^{\widetilde H_i}$ and $x$ projects to $1$ under $\bU_{\gamma,r} \rar \bU_{\gamma,r' - 1}$, then it projects to $1$ under $\bU_{\gamma,r} \rar \bU_{\gamma,r'}$. The adjoint action of $\bT_1$ on $\bU_{\gamma,r'}^{r'-1}$ can be described as follows: fix an isomorphism $\bfG_{a,\breve k} \stackrel{\sim}{\rar} \bfU_{\gamma}$, which is part of an \'epinglage for $\bfG$. It induces an isomorphism $u_{\gamma,r'}^{r-1} \colon \bG_{a,\obF} \stackrel{\sim}{\rar} \bU_{\gamma,r'}^{r'-1}$, and the adjoint action is given by ${\rm Ad}(t)(u_{\gamma,r'}^{r'-1}(x)) = u_{\gamma,r'}^{r'-1}(\gamma(t)x)$. Now the result follows, as the image of $\widetilde H$ in $\bT_1$ is not contained in $\ker(\gamma)$ by Proposition \ref{prop:regularity_general}.
\end{proof}

\section{Cohomology of $\Sigma$}\label{sec:proof_for_aasG}

As in Section~\ref{sec:regularity} we assume that $\bfG$ is an unramified absolutely almost simple group. 
Throughout this section we will assume that $\bfG$ is an unramified absolutely almost simple group, and that condition \eqref{cond:condition_on_rootsystem} holds for $\bfG$ and $q$. We fix a Coxeter pair $(\bfT,\bfU)$ as in Proposition~\ref{prop:existence-of-good-coxeter}. In particular the action of $F$ on $W$ is given by $F = q c\sigma$ where $\sigma$ is an automorphism of $W$ permuting the simple reflections and $c\sigma$ is a twisted Coxeter element of $(W,\sigma)$. The purpose of this section is to show that \eqref{eq:vTF_and_Sigma_v} holds for such a Coxeter pair. This will imply Theorem~\ref{thm:higher_mackey} for general unramified groups.

\subsection{Non-emptyness of cells}
We give here conditions for a cell ${}^{\bU,\bU'}\Sigma_v$ to be empty (see \eqref{eq:Sigma_v_def} for the definition of the cell). Unlike Theorem~\ref{thm:sigmav} which is stated in the case where $\bU = \bU'$, we will work here with more general Coxeter pairs. 

\begin{prop}\label{prop:sigma_non_empty}
Let $a \in \mathbb{Z}$ and set $\bfU' :=F^a(\bfU)$. If $v \in W$ is such that ${}^{\bU,\bU'} \Sigma_v \neq \varnothing$, then at least one of the following holds
\begin{itemize}
\item[(i)] $v \in W^F$;
\item[(ii)] ${}^{v^{-1}}\bfU \cap \bfU'$ is contained in a proper Levi subgroup of $\bfG$ containing $\bfT$.
\end{itemize}
\end{prop}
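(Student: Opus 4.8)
The plan is to analyze the equation $xF(y) = yx'$ defining $\Sigma_v$ and show that, unless $v$ is $F$-fixed, the unipotent part $({}^{v^{-1}}\bfU \cap \bfU')$ must be ``degenerate'' in the sense of being contained in a proper Levi. First I would recall the decomposition $y \in \bU \dot v \bT' \bK^1 \bU'$ with $\bK = {}^{v^{-1}}\bU^- \cap \bU'^-$ from Section~\ref{sec:Lusztigs_extension}, and pass to the scheme $\widehat\Sigma_v$ via \eqref{eq:coh_Sigma_Sigma_hat_same}, which is nonempty iff $\Sigma_v$ is. Projecting to the reductive quotient $\bG_1$ (i.e. setting $r=1$), nonemptiness of $\Sigma_v$ forces nonemptiness of the corresponding cell ${}^{\bU_1,\bU'_1}\Sigma_{v,1}$ for the reductive group $\bG_1$, so it suffices to prove the statement at level $r=1$, where $\bG_1 = \caG_{\bf x}\otimes\obF$ is a connected reductive group over $\obF$ and we are in the classical Deligne--Lusztig situation.

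At level one the condition $\Sigma_v \neq \varnothing$ is a well-understood statement: following \cite[6.6]{DeligneL_76} and the analysis in \cite[\S6]{DeligneL_76}, the cell $\Sigma_v$ (which governs the inner product computation via the Bruhat cell $\bU_1 \dot v \bT'_1 \bU'_1$) is nonempty precisely when there is a solution to the equation relating $F(v)$ and $v$ through the geometry of $\bfU$ and $\bfU' = F^a(\bfU)$. Concretely, I would argue that nonemptiness of ${}^{\bU,\bU'}\Sigma_v$ implies that the element $v^{-1}F(v)$ (or rather the relevant twisted analog, taking into account that $\bfU' = F^a(\bfU)$) lies in the parabolic subgroup generated by those reflections $s_\alpha$ for which the root subgroup $\bfU_\alpha$ appears in ${}^{v^{-1}}\bfU \cap \bfU'$; when this parabolic is all of $W$, the element is forced to satisfy $F(v) = v$, giving case~(i), and otherwise ${}^{v^{-1}}\bfU \cap \bfU'$ is spanned by root subgroups for roots lying in a proper parabolic subsystem, hence contained in the Levi of that parabolic, giving case~(ii). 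The precise mechanism is the standard fact that for the Lang-type equation over a connected reductive group, a cell indexed by $v$ in a double-coset decomposition is nonempty only if the ``defect'' $v^{-1}F(v)$ is constrained to a subgroup determined by the intersection pattern of the two unipotent radicals.

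The main obstacle I anticipate is making the bookkeeping between $\bfU$ and $\bfU' = F^a(\bfU)$ precise: since $\bfU'$ is a Frobenius-twist of $\bfU$ rather than $\bfU$ itself, the relevant ``twisted Coxeter'' combinatorics on $W$ enters, and one must carefully identify which element of $W$ plays the role of the defect and why its lying in a proper parabolic subgroup of $W$ translates into ${}^{v^{-1}}\bfU \cap \bfU'$ lying in a proper Levi. I would handle this by writing $\bfU' = {}^{w_a}(F^a\text{-conjugate})$ and tracking the root systems: $\Phi(\bfT, {}^{v^{-1}}\bfU \cap \bfU')$ is an intersection of two (twisted) sets of positive roots, and I would invoke the classical result that such an intersection, together with the requirement that the corresponding cell be nonempty, either exhausts a full positive system relative to some choice of simple roots (forcing $F(v)=v$ up to the torus, hence $v \in W^F$) or is contained in the root system of a proper Levi. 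A clean way to organize this is to reduce to the analysis of \cite[\S6.6--6.8]{DeligneL_76} directly: their argument shows the cohomology of $\Sigma_v$ vanishes unless $v$ is a specific $F$-fixed element, and nonemptiness is a weaker condition which should still pin down $v$ up to the parabolic generated by $\Phi(\bfT,{}^{v^{-1}}\bfU\cap\bfU')$. I expect the proof to be relatively short once the level-one reduction is in place, with the bulk of the work being the translation of ``cell nonempty'' into the stated dichotomy via the Bruhat decomposition of $\bfG_1$.
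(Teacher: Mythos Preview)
Your reduction to level $r=1$ is correct and matches the paper's first step: nonemptiness of ${}^{\bU,\bU'}\Sigma_v$ forces $v^{-1}\bB_1 F(\bB_1 v) \cap \bB_1' F(\bB_1') \neq \varnothing$, which is a statement about the reductive quotient $\bG_1$. From there, however, your plan has a genuine gap.

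You appeal to a ``standard fact'' that nonemptiness constrains the defect $v^{-1}F(v)$ to lie in the parabolic subgroup generated by the roots of ${}^{v^{-1}}\bfU \cap \bfU'$, and then to \cite[\S6.6--6.8]{DeligneL_76}. Neither source supplies this. In Deligne--Lusztig the cells $\Sigma_v$ are generically nonempty for many $v$; what they compute is cohomology, not a nonemptiness criterion, and no dichotomy of the form (i)/(ii) is established there. The dichotomy here is \emph{specific to the Coxeter situation} and does not follow from general Bruhat-cell bookkeeping.

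What the paper actually does after reducing to level one is the following, and it is the substance of the proof. Writing $F = qc\sigma$ and $\bB_1' = {}^d\bB_1$ with $d = (c\sigma)^a\sigma^{-a}$, the nonemptiness condition becomes
\[
(\bB_1 c \bB_1 \sigma(vd)\bB_1) \cap (\bB_1 vd \bB_1 \sigma^a(c)\bB_1) \neq \varnothing.
\]
Because $c$ is a product of \emph{distinct} simple reflections (one per $\sigma$-orbit), any Bruhat cell in $\bB_1 c \bB_1 \cdot g\bB_1$ is indexed by $c_I g$ for a subexpression $c_I$ of $c$; likewise on the right. So one gets $c_I \sigma(vd) = vd\,\sigma^a(c_J)$ for some $I,J \subseteq \{1,\dots,r\}$. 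The argument then writes $w_0 vd = w_1 x w_2$ with $x$ of minimal length in $W_K x W_J$ (where $W_K = {}^{w_0}W_I$), applies Deodhar's lemma to show $\sigma(x)=x$, uses \emph{ellipticity} of twisted Coxeter elements to force ${}^xJ = K$, and finally a root-positivity argument to show $x=1$. Only after all this does one conclude: either $J$ is proper, giving the Levi containment (ii), or $I=J=\{1,\dots,r\}$, in which case $c\sigma(vd)=vd\,\sigma^a(c)$ unwinds to $v \in W^{c\sigma}=W^F$.

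None of these ingredients --- subexpressions of $c$, reduced double-coset representatives, Deodhar's lemma, ellipticity of Coxeter elements --- appear in your outline, and they are not replaceable by a citation to \cite{DeligneL_76}. The ``main obstacle'' you anticipate (tracking the twist by $F^a$) is real but is the easy part; the hard part is the Weyl-group combinatorics that converts the double-coset intersection into the dichotomy, and that is where your proposal is missing the key idea.
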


\begin{proof}
Recall from Proposition \ref{prop:existence-of-good-coxeter} that $F = qc\sigma$ with $c\sigma$ a twisted Coxeter element. 
 Given $v \in W$, let us consider the condition
\begin{equation}\label{eq:non-empty}
v^{-1} \bB_1 F(\bB_1 v) \cap \bB'_1 F(\bB'_1) \neq \varnothing
\end{equation}
From the definition of ${}^{\bU,\bU'} \Sigma_v$ we see that if ${}^{\bU,\bU'} \Sigma_v$ is non-empty then there exists $x,x \in F(\bB_1)$ and $y \in \bB_1 v \bB_1'$ such that $xF(y) = yx'$. Writing $y = b v b'$ with $b \in \bB_1$ and $b' \in \bB_1'$ we deduce that $v^{-1} b^{-1} x F(bv) = b'vxF(b')^{-1}$ so that  
\eqref{eq:non-empty} holds. Therefore it is enough to show that if 
\eqref{eq:non-empty} holds for $v$ then (i) or (ii) hold as well. 

\smallskip Since $F(\bB) = {}^c \bB$ we have  $\bB'_1 = F^a(\bB_1) = {}^{d} \bB_1$ with $d = c\sigma(c) \cdots \sigma^{a-1}(c) = (c\sigma)^a \sigma^{-a}$. Using the fact that $d^{-1} F(d) = \sigma^a(c) c^{-1}$ we get
$$ \begin{aligned}
v^{-1} \bB_1 F(\bB_1 v) \cap \bB'_1 F(\bB'_1) & \, = \big(v^{-1} \bB_1 {}^c\bB_1 F(v) \big)\cap \big(d \bB_1 d^{-1} F(d) {}^c\bB_1 F(d^{-1})\big) \\
& \, =   \big(v^{-1} \bB_1 c\bB_1\sigma(v) c^{-1} \big)\cap \big(d \bB_1 \sigma^a(c) \bB_1 \sigma(d^{-1}) c^{-1}\big)\\
& \, =  v^{-1} \Big(\big(\bB_1 c \bB_1 \sigma(vd) \big) \cap \big(vd \bB_1 \sigma^a(c) \bB_1\big)\Big) \sigma(d^{-1}) c^{-1}.\end{aligned}$$ 
Therefore \eqref{eq:non-empty} is equivalent to 
$$\big(\bB_1 c \bB_1 \sigma(vd) \big) \cap \big(vd \bB_1 \sigma^a(c) \bB_1\big) \neq \varnothing$$
which in turn is equivalent to 
\begin{equation}\label{eq:double-bruhat}
 (\bB_1 c \bB_1 \sigma(vd) \bB_1) \cap (\bB_1 vd \bB_1 \sigma^a(c)  \bB_1) \neq \varnothing.
\end{equation}

\smallskip

Let $\Delta \subseteq \Phi(\bfT,\bfG)$ be the set of simple roots corresponding to $\bfU$.  Since $c\sigma$ is a twisted Coxeter element, there exists representatives of $\sigma$-orbits of simple reflections $s_1,\ldots,s_r$ with $r = |\Delta/\sigma|$ such that  
$c =s_1 s_2 \cdots s_r$. Given $I \subset \{1,\ldots,r\}$ we will denote by $W_I$ the smallest $\sigma$-stable parabolic subgroup of $W$ containing $s_i$ for all $i\in I$  and by $c_I = \prod_{i \in I} s_i$  the element of $W_I$ obtained from $c$ by keeping the simple reflections labelled by $I$. Note that $c_I \sigma$ is a twisted Coxeter element of $(W_I,\sigma)$. 

\smallskip

Assume that \eqref{eq:double-bruhat} holds. Since $c$ contains each simple reflection at most once, the Bruhat cells $\bB_1 u \bB_1$ contained in $\bB_1 c \bB_1 \sigma(vd) \bB_1$ (resp. in $\bB_1 vd \bB_1 \sigma^a(c) \bB_1$) are attached
to elements $u \in W$ of the form $u = c_I \sigma(vd)$ for some $I \subset \{1,\ldots,r\}$ (resp. $u = vd\sigma^a(c_J) $ for some $J \subset \{1,\ldots,r\}$). Consequently if \eqref{eq:double-bruhat} holds then there exists $I,J \subset  \{1,\ldots,r\}$ such that $c_I \sigma(vd) = vd\sigma^a(c_J)$. Set $w := w_0 vd$ where $w_0$ is the longest element of $W$. Since $\sigma(w_0) = w_0$ we have $({}^{w_0} c_I) \sigma(w) = w \sigma^a(c_J)$. Let $K \subset \{1,\ldots,r\}$ be such that $W_K := {}^{w_0} W_I$. Then  ${}^{w_0} c_I \sigma$ is a twisted Coxeter element of $W_K$ (but not necessarily equal to $c_K \sigma $).  
By \cite[Prop. 2.1.7]{GeckP_00}, one can write $w = w_1 x w_2$ where $x \in W$ is $K$-reduced-$J$ (i.e. of minimal length in $W_K x W_I$) and $(w_1,w_2) \in W_K\times W_J$. Since $x$ is $K$-reduced-$J$ we claim that
$$ W_{J} \cap x^{-1} W_K \sigma(x) = \left\{\begin{array}{ll} W_{J \cap K^x} & \text{if } \sigma(x) = x \\
\emptyset & \text{otherwise}.  
\end{array} \right.$$
The proof of this claim follows for example from the proof of \cite[Thm. 2.1.12]{GeckP_00}. Indeed, if $W_{J} \cap x^{-1} W_K \sigma(x)$ is non-empty then there exists $y \in W_J$ and $z \in W_K$ such that $xy = z\sigma(x)$. Since 
$x$ is reduced-$J$ and $\sigma(x)$ is $K$-reduced we have necessarily $\ell(y) = \ell(z)$. Let $y = y_1 \cdots y_m$ be a reduced expression of $y$. We define inductively $z_i \in W_K$ and $x_i$ a $K$-reduced element by the conditions $x_0 = x$ and $x_{i-1} y_i = z_i x_i$ for all $i = 1,\ldots,m$. In particular $z = z_1 \cdots z_m$ and $x_m = \sigma(x)$. By Deodhar's Lemma \cite[Lem. 2.1.2]{GeckP_00} we have $\ell(z_i) = 1$ and $x_i = x_{i-1}$ for all $i$ (the case $z_i = 1$ does not happen since $\ell(z)=\ell(y) = m$). In particular $\sigma(x) = x$ and the result of \cite[Thm. 2.1.12]{GeckP_00} applies.

\smallskip

The equality $({}^{w_0} c_I) \sigma(w) = w \sigma^a(c_J)$ forces $W_{J} \cap x^{-1} W_K \sigma(x)$ to be non-empty, therefore $\sigma(x) = x$. Now the element 
$$ w_2^{-1} \sigma^a(c_J) \sigma(w_2) = x^{-1} w_1^{-1} ({}^{w_0} c_I) \sigma(w_1 x) = x^{-1} (w_1^{-1} w_0 c_I w_0^{-1} \sigma(w_1)) x$$
lies in $W_{J} \cap x^{-1}(W_K)x$ and is $\sigma$-conjugate to a twisted Coxeter element of $W_J$. Since Coxeter elements are elliptic, this forces  $W_{J \cap K^x} = W_J$, hence $J \subset K^x $. Similarly, we find $K \subset {}^xJ$ hence ${}^x J = K$. In particular one can write $w = w' x$ with $w' \in W_K$.

\smallskip

Let us now look more precisely at what elements $u \in W$ can appear. If $\bB_1 v d \sigma^a(c_J) \bB_1 \subset \bB_1 vd \bB_1 \sigma^a(c) \bB_1$ with $J = \{ j_1 < j_2 < \cdots < j_m\}$ then for all $i =0, \ldots,m$ and all $j_i < l < j_{i+1}$ we must have $v d \sigma^a(s_{j_1} \cdots s_{j_i} s_l) < v d \sigma^a(s_{j_1} \cdots s_{j_i})$, with the convention that $j_0 = 0$, $j_{m+1} = r+1$ and $s_{j_0}=1$. On the other hand, $w = w' x$ with $w' \in W_K$ and $x$ is $K$-reduced. Since ${}^x J= K$ we can write $K = \{k_1,\ldots,k_m\}$ with $x s_{j_i}x^{-1} = s_{k_i}$.
Then the condition
$$ w \sigma^a(s_{j_1} \cdots s_{j_i} s_l)  > w \sigma^a(s_{j_1} \cdots s_{j_i})$$
can be written
$$w' \sigma^a(s_{k_1} \cdots s_{k_i} x s_l)  > w' \sigma^a(s_{k_1} \cdots s_{k_i} x) $$
Now, since $w' \sigma^a(s_{k_1} \cdots s_{k_i}) \in W_K$ and ${}^x s_l \notin W_K$ this forces $x \sigma^a(s_l) > x$ hence $x s_l > x$ (recall that $\sigma(x) =x$). Indeed, if $\alpha_l$ denotes the simple root associated to $s_l$ then $w' \sigma^a(s_{k_1} \cdots s_{k_i}x) (\alpha_l) > 0$ by assumption. Since  $x(\alpha_l)$ is not in $\Phi_K$, the root subsystem associated to $W_K$, the element $w' \sigma^a(s_{k_1} \cdots s_{k_i}) \in W_K$ cannot change the sign of $x(\alpha_l)$, therefore $x(\alpha_l)>0$. Since $l$ runs over all the elements in $\{1,\ldots,r\}\smallsetminus J$ and $x$ is reduced-$J$ this proves that $x s > x$ for all simple reflections $s$ in $I$, and therefore $x =1$ since $x$ is $\sigma$-stable. Consequently $v d = w_0 w = w_0 w' \in w_0 W_K = W_J w_0$.  If $J \neq \{1,\ldots,r\}$, then ${}^{d^{-1}v^{-1}}\bfU \cap \bfU = {}^{d^{-1}}({}^{v^{-1}} \bfU \cap \bfU')$ is contained in the Levi subgroup of $\bfG$ corresponding to $W_J$, hence (ii) holds. Otherwise $I = J = K = \{1,\ldots,r\}$ and the relation $c_I \sigma(vd) = vdc_J$ is just $c\sigma(vd)=vd \sigma^a(c)$ which, with $d = (c\sigma)^a \sigma^{-a}$ gives $v \in W^{c\sigma} = W^F$, hence (i) holds.
\end{proof}

\subsection{Comparison of various cells}\label{sec:comparison_of_cells}

In this section we prove that \eqref{eq:vTF_and_Sigma_v} holds for the Coxeter pair $(\bfT,\bfU)$.  
Note that the Coxeter number $h$ (the order of $c\sigma$) is even unless $W$ is of type $A_n$ with $n$ even.
Proposition \ref{prop:existence-of-good-coxeter} implies that when $h$ is even we have $c \sigma(c) \cdots \sigma^{h/2-1}(c) = w_0$, the longest element in $W$.

\begin{lm}\label{lm:dim_sigma_coh_v_not_centralizing_c}
Assume that $v \in W \smallsetminus W^F$. Then 
$$ H_c^\ast({}^{\bU,\bU}\Sigma_v) = 0$$
as a virtual $(\bT\times^\bZ \bT)^F$-module.
\end{lm}
\begin{proof}
If ${}^{\bU,\bU}\Sigma_v$ is empty then the statement is trivial. Otherwise, Proposition~\ref{prop:sigma_non_empty} ensures that ${}^{v^{-1}}\bfU \cap \bfU$ is contained in a proper Levi subgroup $\bfL$ of $\bfG$ containing $\bfT$. In particular the torus $\bH := H_{\bfL,v,r, \red}^\circ$ defined in \S\ref{sec:consequences_regularity} is contained in $H_v$, which by  \ref{lm:Lusztigs_extension_of_action} acts on $\widehat\Sigma_v$. Using Corollary~\ref{cor:regularity_general}, we see that $({}^{\bU,\bU}\widehat\Sigma_v)^{\bH} $ is empty since $F(v) \neq v$. By \eqref{eq:Euler_char}, this shows that $H_c^\ast({}^{\bU,\bU}\widehat\Sigma_v) = 0$. The same holds for ${}^{\bU,\bU}\Sigma_v$ since 
it is related to ${}^{\bU,\bU}\widehat \Sigma_v$ by a $(\bT\times^\bZ \bT)^F$-equivariant map $\widehat\Sigma_v \rar \Sigma_v$ which is a Zariski-locally trivial fibration with fibers isomorphic to the perfection of a fixed affine space.
\end{proof}

Given $a \in \mathbb{Z}$ and $v \in W^F$ we define the virtual $(\bT\times^\bZ \bT)^F$-modules
\begin{align*}
h_{a,v} &:=  H_c^\ast({}^{\bU,F^a(\bU)}\Sigma_{v},\cool)  = H_c^\ast({}^{\bU,F^a(\bU)}\widehat\Sigma_{v},\cool)  = H_c^\ast({}^{\bU,F^a(\bU)}\widetilde\Sigma_{v},\cool), \\
\widetilde h_v &:= H_c^0((\dot v\bT)^F,\cool) = \cool(\dot v\bT)^F.
\end{align*}
Note that $h_{a,v}$ depends only on the class of $a$ modulo $h$, the Coxeter number.   

\begin{lm}\label{lm:known_cells}
Let $c_a = (c\sigma)^a\sigma^{-a}$. Assume that either
\begin{itemize} 
\item $h$ is even and $vc_a \in \{w_0,cw_0,w_0 \sigma^a(c^{-1})\}$; or 
\item $h$ is odd and $vc_a = c_{\lfloor h/2 \rfloor \pm1}$.
\end{itemize}
Then $h_{a,v} = \widetilde h_v$.
\end{lm}
\begin{proof}
Assume first that $h$ is even and that $vc_a=w_0$. Then ${}^{v^{-1}} \bU \cap F^a(\bU) = {}^{v^{-1}}\bU \cap {}^{c_a}\bU =   {}^{c_a}({}^{(vc_a)^{-1}}\bU \cap \bU) =  {}^{c_a}(\bU^- \cap \bU) = 1$ implies that $H_v = \bT\times\bT'$, which acts on $\widehat \Sigma_v$ by Lemma \ref{lm:Lusztigs_extension_of_action}. By Corollary \ref{cor:regularity_general} applied to the Levi subgroup $\bfL = \bfT$, the map
$$ \dot v \tau \in (\dot v \bT)^F \longmapsto (1,1,1,\tau,1,1) \in \widehat \Sigma_v$$
induces a $(\bT\times^\bZ \bT)^F$-equivariant isomorphism
$$ (\dot v \bT)^F \simeq (\widehat \Sigma_v)^{(H_v)_\red}$$
and the result follows. Similarly, the other two cases follow by using Lemma \ref{lm:new_simple_extensions}(i) resp. \ref{lm:new_simple_extensions}(ii) instead of Lemma \ref{lm:Lusztigs_extension_of_action}.
\smallskip

When $h$ is odd then $W$ is of type $A_n$ with $n$ even and $\sigma= 1$. In that case $h = n+1$ and $\ell(c) = n$. By Proposition \ref{prop:existence-of-good-coxeter} we have $\ell(c^{n/2}) = n^2/2$ and $\ell(c^{n/2+1}) = \ell(c^{-n/2}) =  n^2/2$.
Therefore if $k = \lfloor h/2 \rfloor \pm1$ we have $\ell(w_0 c_k) = n(n+1)/2-n^2/2 = n/2 < n$, which forces $w_0 c_k$ to lie in a proper parabolic subgroup of $W$. Consequently ${}^{v^{-1}} \bU \cap F^a(\bU) = {}^{v^{-1}}\bU \cap {}^{c_a}\bU =   {}^{c_a}({}^{(vc_a)^{-1}}\bU \cap \bU)  = {}^{c_a}({}^{(w_0c_k)^{-1}}\bU^- \cap \bU)$ lies in a proper parabolic subgroup of $\bfG$ containing $\bfT$. The result follows again by the combination of Lemma \ref{lm:Lusztigs_extension_of_action} and Corollary~\ref{cor:regularity_general}. 
\end{proof}

The key observation is the following proposition.

\begin{prop}\label{prop:compare_Coxeter_cells}
Let $a \in \bZ/h\bZ$ and $v \in W^F$. We have $h_{a,v} = h_{a+1,v}$, unless $\sigma$ is trivial and $v = w_0 c^{-a}$ or $v= w_0 c^{-a-1}$.
\end{prop}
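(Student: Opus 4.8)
The plan is to compare the two cells ${}^{\bU,F^a(\bU)}\Sigma_v$ and ${}^{\bU,F^{a+1}(\bU)}\Sigma_v$ using the isomorphism $\alpha = {}^{\bU,F^a(\bU)}\alpha$ of Section~\ref{sec:isomorphism}, which is $T\times T'$-equivariant but does not respect the cell decomposition. First I would apply $\alpha$ to $\Sigma = {}^{\bU,F^a(\bU)}\Sigma$ and decompose the target $\Sigma = {}^{\bU,F^{a+1}(\bU)}\Sigma = \coprod_w {}^{\bU,F^{a+1}(\bU)}\Sigma_w$; this gives $\alpha({}^{\bU,F^a(\bU)}\Sigma_v) = \coprod_w \bigl(\alpha({}^{\bU,F^a(\bU)}\Sigma_v) \cap {}^{\bU,F^{a+1}(\bU)}\Sigma_w\bigr)$, a $T\times T'$-stable locally closed decomposition, hence an identity of virtual modules $h_{a,v} = \sum_w H_c^\ast\bigl(\alpha({}^{\bU,F^a(\bU)}\Sigma_v)\cap{}^{\bU,F^{a+1}(\bU)}\Sigma_w\bigr)$. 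Symmetrically, decomposing $\alpha^{-1}\bigl({}^{\bU,F^{a+1}(\bU)}\Sigma_w\bigr)$ along the cells ${}^{\bU,F^a(\bU)}\Sigma_v$ yields $h_{a+1,w} = \sum_v H_c^\ast\bigl({}^{\bU,F^a(\bU)}\Sigma_v \cap \alpha^{-1}({}^{\bU,F^{a+1}(\bU)}\Sigma_w)\bigr)$. The summands in the two expressions are the cohomologies of the schemes $Y_{v,w}$ (living over ${}^{\bU,F^{a+1}(\bU)}\widehat\Sigma_w$) and $Z_{v,w}$ (living over ${}^{\bU,F^a(\bU)}\widehat\Sigma_v$) from Lemmas~\ref{lm:alpha_image_stable_under_Lusztigs_extension} and \ref{lm:alpha_inverse_image_stability_Lusztig}, up to the affine-space fibration relating $\widehat\Sigma$ with $\Sigma$.

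The main point is then to show that almost all of these off-diagonal contributions vanish. For $w \neq v$, I would argue that $H_c^\ast(Y_{v,w}) = 0$ and $H_c^\ast(Z_{v,w}) = 0$ as virtual $(\bT\times^\bZ\bT)^F$-modules by exhibiting a torus acting with no fixed points. Concretely: take $\bfL$ a proper Levi containing $\bfT$ through which the relevant intersection of unipotent radicals factors (its existence comes from Proposition~\ref{prop:sigma_non_empty} applied to the pair defining the non-empty intersection cell, after a suitable Frobenius twist), form $\bH = H_{\bfL,w,r,\red}^\circ \subseteq H_w$, which acts on ${}^{\bU,F^{a+1}(\bU)}\widehat\Sigma_w$ by Lemma~\ref{lm:Lusztigs_extension_of_action}(i) and preserves $Y_{v,w}$ by Lemma~\ref{lm:alpha_image_stable_under_Lusztigs_extension}; since $w$ may fail to lie in the relevant Frobenius-fixed locus exactly when $w\neq v$, Corollary~\ref{cor:regularity_general} forces $Y_{v,w}^\bH = \varnothing$, so $H_c^\ast(Y_{v,w}) = 0$ by \eqref{eq:Euler_char} (which applies in the $Y_{v,w}$, $Z_{v,w}$ case, as noted in Section~\ref{sec:perfect_schemes_and_coh}). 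The same works for $Z_{v,w}$ via Lemma~\ref{lm:Lusztigs_extension_of_action}(ii) and Lemma~\ref{lm:alpha_inverse_image_stability_Lusztig}. The exceptional cases are precisely those where the combinatorics forces ${}^{w^{-1}}\bU \cap F^{a+1}(\bU) = 1$ (so $H_w$ is everything and the regularity argument with a proper Levi is unavailable), i.e. $wc_{a+1} = w_0$ by the computation in Lemma~\ref{lm:known_cells}; this happens for $w = w_0 c_{a+1}^{-1} = w_0 \sigma^{a+1}(c^{-1})\cdots$, and one finds that, in the relevant range, $w \neq v$ forces $w = w_0 c^{-a}$ or $w_0 c^{-a-1}$ when $\sigma$ is trivial — matching the stated exception — while if $\sigma$ is non-trivial these potentially-exceptional $w$ are themselves not in $W^F$ and contribute $0$ by Lemma~\ref{lm:dim_sigma_coh_v_not_centralizing_c}.

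Granting the vanishing, the first decomposition collapses to $h_{a,v} = H_c^\ast(Y_{v,v})$ and the second (with $w = v$) to $h_{a+1,v} = H_c^\ast(Z_{v,v})$; it remains to identify $H_c^\ast(Y_{v,v})$ with $H_c^\ast(Z_{v,v})$, and both with $h_{a,v}$ resp. $h_{a+1,v}$. For this I would check that $Y_{v,v} \subseteq {}^{\bU,F^{a+1}(\bU)}\widehat\Sigma_v$ is exactly the preimage of $\alpha\bigl({}^{\bU,F^a(\bU)}\Sigma_v\bigr)\cap {}^{\bU,F^{a+1}(\bU)}\Sigma_v$ and that $\alpha$, restricted appropriately, identifies this with (an affine-bundle over) ${}^{\bU,F^a(\bU)}\Sigma_v$ itself, compatibly with the $(\bT\times^\bZ\bT)^F$-actions — essentially because $\alpha$ is an isomorphism and the diagonal piece is the ``generic'' part where $v$ is not changed; this is the content hidden in the careful bookkeeping of Lemmas~\ref{lm:alpha_image_stable_under_Lusztigs_extension} and \ref{lm:alpha_inverse_image_stability_Lusztig}. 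The hard part will be the combinatorial identification of exactly which $w$ escape the proper-Levi regularity argument and the verification that these coincide with $\{w_0 c^{-a}, w_0 c^{-a-1}\}$ precisely when $\sigma = 1$; this requires tracking the twisted-Bruhat computation as in Proposition~\ref{prop:sigma_non_empty} together with the length identity $c\sigma(c)\cdots\sigma^{h/2-1}(c) = w_0$ from Proposition~\ref{prop:existence-of-good-coxeter}, and carefully distinguishing the $\alpha$ versus $\alpha^{-1}$ sides.
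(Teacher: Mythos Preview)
Your overall architecture matches the paper's: decompose $h_{a,v}$ and $h_{a+1,v}$ via the isomorphism $\alpha$, lift the intersection pieces to $Y_{v,w}$ and $Z_{v,w}$ inside the resolutions $\widehat\Sigma$, and kill the off-diagonal contributions so that both sides reduce to the common diagonal term. The final identification is also fine: $Y_{v,v}$ and $Z_{v,v}$ are affine fibrations over the \emph{same} intersection $\alpha({}^{\bU,F^a\bU}\Sigma_v)\cap{}^{\bU,F^{a+1}\bU}\Sigma_v$, so their cohomologies coincide automatically; no separate ``identification with $\Sigma_v$ itself'' is needed (and indeed that identification is false --- the diagonal piece is a proper locally closed subset, not the whole cell).

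The genuine gap is in how you kill the off-diagonal terms. Your torus argument via $\bH=H_{\bfL,w,r,\red}^\circ\subseteq H_w$ and Corollary~\ref{cor:regularity_general} only shows that $(\widehat\Sigma_w)^\bH$ is empty when $w\notin W^F$; when $w\in W^F$ the fixed locus is $(\dot w\bT)^F\neq\varnothing$. Your sentence ``since $w$ may fail to lie in the relevant Frobenius-fixed locus exactly when $w\neq v$'' conflates the condition $F(w)=w$ with $w=v$. More seriously, your diagnosis of the exceptional case is backwards: you say the regularity argument is unavailable when ${}^{w^{-1}}\bU\cap F^{a+1}(\bU)=1$, but that is precisely the \emph{easiest} case --- then $\bK=1$, the centralising condition in $H_w$ is vacuous, and one may take $\bfL=\bfT$ in Corollary~\ref{cor:regularity_general}. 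The obstruction is the opposite situation, where ${}^{w^{-1}}\bU\cap F^{a+1}(\bU)$ is \emph{not} contained in any proper Levi, and Proposition~\ref{prop:sigma_non_empty} does not exclude this when $w\in W^F$.

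The paper handles the terms with $w\in W^F$, $w\neq v$, by a completely different mechanism: a direct Bruhat-type computation (Lemma~\ref{lm:emptyness_Yckck}) showing that ${}^aY_{v,w}$ is \emph{empty} unless $\sigma=1$, $v=w_0c^{-a}$ and $w=vc^{-1}$. This is the substantive combinatorial input, analogous in spirit to Proposition~\ref{prop:sigma_non_empty} but finer, comparing the $v$-cell for the $a$-decomposition with the $w$-cell for the $(a+1)$-decomposition. Your proposal gestures toward such a computation at the end but frames it as ``identifying which $w$ escape the proper-Levi argument'', which is not what is needed; what is needed is to show that for $v,w\in W^F$ with $v\neq w$ the intersection piece is empty, and this requires an argument independent of the torus machinery.
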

\begin{proof}
As in Section \ref{sec:isomorphism} we have the isomorphism $\alpha \colon {}^{\bU,F^a(\bU)}\Sigma \rar {}^{\bU,F^{a+1}(\bU)} \Sigma$, and the cell ${}^{\bU,F^a(\bU)}\Sigma_{v}$ decomposes into finitely many locally closed $(\bT\times^\bZ \bT)^F$-stable pieces: 
\begin{equation}\label{eq:decomposition_of_coh_via_alpha}
{}^{\bU,F^{a}(\bU)}\Sigma_{v} = \bigcup_{w \in W} \alpha^{-1}\left(\alpha \left({}^{\bU,F^{a}(\bU)}\Sigma_{v}\right) \cap {}^{\bU,F^{a+1}(\bU)} \Sigma_w\right),
\end{equation}
As in Lemma \ref{lm:alpha_image_stable_under_Lusztigs_extension}, we have the $(\bT\times^\bZ \bT)^F$-stable piece ${}^a Y_{v,w} := Y_{v,w} \subseteq {}^{\bU,F^{a+1}(\bU)}\widehat\Sigma_{v}$, and it satisfies
\[
H_c^\ast({}^a Y_{v,w})  = H_c^\ast \left(\alpha\left({}^{\bU,F^{a}(\bU)}\Sigma_{v}\right) \cap {}^{\bU,F^{a+1}(\bU)} \Sigma_w \right)
\]
This and \eqref{eq:decomposition_of_coh_via_alpha} give
\begin{equation} \label{eq:hak_through_ha1k} 
h_{a,v} = \sum_{w\in W}H_c^\ast({}^aY_{v,w},\cool) 
\end{equation}
By Lemma \ref{lm:alpha_image_stable_under_Lusztigs_extension}, ${}^aY_{v,w} \subseteq {}^{\bU,F^{a+1}(\bU)}\widehat\Sigma_w$ is stable under the $H_w$-action on ${}^{\bU,F^{a+1}(\bU)}\widehat\Sigma_w$ as in Lemma \ref{lm:Lusztigs_extension_of_action}(i). If ${}^{w^{-1}} \bfU \cap F^{a+1}(\bfU)$
is contained in a proper Levi subgroup of $\bfG$ containing $\bfT$, then using again the argument as in the proof of Lemma~\ref{lm:dim_sigma_coh_v_not_centralizing_c} we have $H_c^\ast({}^aY_{v,w},\cool) = 0$ whenever $w \notin W^F$ (cf. Section \ref{sec:perfect_schemes_and_coh}). Consequently by Proposition~\ref{prop:sigma_non_empty} applied to $\bU' = F^{a+1}(\bU)$ we only need to consider the case where $w \in W^F$, so that
\begin{equation}\label{eq:hak_through_ha1k_simplified} 
h_{a,v} = \sum_{w \in W^F} H_c^\ast({}^aY_{v,w},\cool)
\end{equation}
Analogously one can decompose the cell ${}^{\bU,F^{a+1}(\bU)}\Sigma_{v}$ into finitely many locally closed $(\bT\times^\bZ \bT)^F$-stable pieces as follows:
$${}^{\bU,F^{a+1}(\bU)}\Sigma_{v} = \bigcup_{w \in W} \alpha \left({}^{\bU,F^{a}(\bU)}\Sigma_{w}\right) \cap {}^{\bU,F^{a+1}(\bU)} \Sigma_v,$$
and using Lemmas \ref{lm:alpha_inverse_image_stability_Lusztig} and \ref{lm:Lusztigs_extension_of_action}(ii) instead of Lemmas \ref{lm:alpha_image_stable_under_Lusztigs_extension} and \ref{lm:Lusztigs_extension_of_action}(i) we show
\begin{equation}\label{eq:hak_through_ha1k_simplified_2} 
h_{a+1,v} = \sum_{w \in W^F} H_c^\ast({}^aY_{w,v},\cool). 
\end{equation}

\begin{lm}\label{lm:emptyness_Yckck}
Let $v,w \in W^F$.
Assume that ${}^aY_{v,w} \neq \varnothing$ and $v\neq w$. Then $\sigma$ is trivial, $v = w_0c^{-a}$ and $v=wc$.
\end{lm}
\begin{proof}
The scheme ${}^aY_{v,w}$ can only be non-empty if $\alpha \left({}^{\bU,F^a(\bU)}\Sigma_{v}\right) \cap {}^{\bU,F^{a+1}(\bU)} \Sigma_{w} \neq \varnothing$. If this is the case, there must exist a point $(x,x',y) \in {}^{\bU,F^a(\bU)}\Sigma_{v}$, such that $\alpha(x,x',y) = (x,F(x'),yx') \in {}^{\bU,F^{a+1}(\bU)} \Sigma_{w}$. Let $y_1 = yx'$, and let $\bar{x}',\bar{y},\bar{y}_1$ denote the images of $x',y,   y_1$ in $\bG_1$. Write $\bB_1 = \bT_1\bU_1$. Given $k \in \mathbb{Z}$ we write $c_k = (c\sigma)^k \sigma^{-k}$ so that $F^k(\bB) = {}^{c_k} \bB$. We then have 
\begin{align}
\nonumber \bar y &\in \bB_1 v F^a(\bB_1) = \bB_1 v c_a \bB_1 c_a^{-1}, \quad  \bar{x}' \in F^{a+1}(\bU_1) = {}^{c_{a+1}}\bU_1, \quad \text{and} \\ 
\label{eq:yxprime_in_coset}\bar{y}_1 &\in \bB_1w F^{a+1}(\bB_1) = \bB_1 wc_{a+1}\bB_1 c_{a+1}^{-1}
\end{align}
From the latter two of these three conditions it follows that $\bar{y} = \bar{y}_1\bar{x}^{\prime -1} \in \bB_1 wc_{a+1} \bB_1 c_{a+1}^{-1}$, and we deduce from the first condition in \eqref{eq:yxprime_in_coset} that $ \bB_1 wc_{a+1} \bB_1 c_{a+1}^{-1} \cap  \bB_1 v c_a \bB_1 c_a^{-1}$ contains $\bar{y}$, hence is non-empty. Multiplying by $c^a$ from the right and using that $c_{a+1} = c_a \sigma^a(c)$ we get
\begin{equation}
\big(\bB_1 wc_{a+1} \bB_1 \sigma^{a}(c^{-1}) \big) \cap  \big( \bB_1 v c_a \bB_1\big)  \neq \emptyset.
\end{equation}
By \cite[Thm. 7.6(v)]{Springer_74}, there exists $k,l \in \{0,1,\ldots,h-1\}$ such that $\sigma^k = \sigma^l = 1$ and
$v = c_k$, $w = c_l$. Therefore the previous equation can be written
$$\big(\bB_1 c_{l+a+1} \bB_1 \sigma^{a}(c^{-1}) \big) \cap  \big( \bB_1 c_{k+a} \bB_1\big)  \neq \emptyset.$$
This implies that 
\begin{equation}\label{eq:double_coset_Coxeter_powers}
\big(\bB_1 c_{l+a+1} \bB_1 \sigma^{a}(c^{-1})\bB_1\big) \cap \big(\bB_1 c_{k+a} \bB_1\big) \neq \varnothing \quad \text{and}
\quad \big(\bB_1 c_{l+a+1} \bB_1\big) \cap \bB_1 c_{k+a}  \bB_1  \sigma^{a}(c) \bB_1\big) \neq \varnothing.
\end{equation}
As in the proof of Proposition~\ref{prop:sigma_non_empty}, recall that the elements $u \in W$ such that $\bB_1 u \bB_1 \subset \bB_1 c_{k+a} \bB_1  \sigma^{a}(c) \bB_1$ are of the form $c_{k+a} \sigma^{a}(c_I)$, where $c_I$ is obtained by removing some simple reflections in $c$. Therefore by \eqref{eq:double_coset_Coxeter_powers} we have $c_{k+a} \sigma^{a}(c_I) = c_{l+a+1}$ for some $c_I \leq c$, yielding $k \in \{l,l+1\}$. In addition when $\ell(c_{k+a+1}) = \ell(c) + \ell(c_{k+a})$ (or when  $\ell(c_{l+a}) = \ell(c^{-1}) + \ell(c_{l+a+1})$) only $c_I= c$ can appear, in which case $k =l$ and hence $v=w$, which contradicts the assumptions of the Lemma. Therefore we have $k = l+1$, $\ell(c_{k+a+1}) \neq \ell(c) + \ell(c_{k+a})$ and $\ell(c_{k+a-1}) \neq \ell(c^{-1}) + \ell(c_{k+a})$. Consequently $\ell(c_{k+a}) > \ell(c_{k+a\pm1})$ therefore $c_{k+a} = w_0$.
Note that we also have $\sigma = \sigma^{k} \sigma^{-l} = 1$ and the lemma follows.   
\end{proof}
Now we finish the proof of Proposition \ref{prop:compare_Coxeter_cells}. Applying Lemma \ref{lm:emptyness_Yckck} we deduce from equation \eqref{eq:hak_through_ha1k_simplified} that $h_{a,v} = H_c^\ast({}^aY_{v,v},\cool) $ unless $\sigma = 1$ and $v=w_0 c^{-a}$.
Similarly, equation \eqref{eq:hak_through_ha1k_simplified_2} yields $h_{a+1,v} =   H_c^\ast({}^aY_{v,v},\cool) $ unless $\sigma = 1$ and $vc=w_0 c^{-a}$. Therefore if $\sigma \neq 1$ or if $v \notin \{w_0 c^{-a}, w_0 c^{-a-1}\}$ we have
\begin{equation}\label{eq:comparison_hak_hak1}
h_{a,v} =   H_c^\ast({}^aY_{v,v},\cool)  = h_{a+1,v},
\end{equation}
which finishes the proof.
\end{proof}

\begin{proof}[Proof of Theorem \ref{thm:sigmav}]
By Lemma \ref{lm:dim_sigma_coh_v_not_centralizing_c} it suffices to show that $h_{0,v} = \widetilde h_v$ for all $v \in W^F$. 
Recall that by  \cite[Thm. 7.6(v)]{Springer_74} the elements in $W^F$ are of the form $c_k = (c\sigma)^k \sigma^{-k}$ for some $k \in \bZ$ with $\sigma^k =1$. 

\smallskip 

If $\sigma$ is non-trivial, then there exists $a \in  \bZ$ such that $vc_a = w_0$ (for example $a = h/2 - k$). By Lemma \ref{lm:known_cells} we have $h_{a,v} = \widetilde h_v$ and from Proposition \ref{prop:compare_Coxeter_cells} we deduce that $h_{0,v} = h_{a,v} = \widetilde h_v$.

\smallskip
If $\sigma = 1$, then $v = c^k$. Without loss of generality we can assume that $v \neq w_0$ (equivalently $k \neq h/2$) since in that case Lemma \ref{lm:known_cells} applies. Assume first that $0 \leq k < h/2$. Then $vc^a = c^{k+a} \neq w_0$ for all $0 \leq a < h/2-k$. Therefore by  Proposition \ref{prop:compare_Coxeter_cells} we have
$h_{0,v} = h_{a,v}$ in that case. If $h$ is even then $vc^{h/2-k-1} = w_0 c^{-1}$ in which case $h_{h/2-k-1,v}$ equals $\widetilde h_v$ by Lemma \ref{lm:known_cells}. If $h$ is odd then $h_{\lfloor h/2\rfloor-k,v}$ equals $\widetilde h_v$ by Lemma \ref{lm:known_cells} again. When $h/2 < k< h$ we have $h_{0,v} = h_{-a,v}$ for all $0 \leq a < h/2+k$, and a similar argument applies. 
\end{proof}

\bibliography{bib_ADLV}{}
\bibliographystyle{alpha}
\end{document}